\def\AiC{\mathrm{AiC}}
\def\Ai{\mathrm{Ai}}
\def\UT{\mathrm{UT}}
\def\cdec{C-decorated\ }
\def\cE{\mathcal{E}}
\def\cT{\mathcal{T}}
\def\cA{\mathcal{A}}
\def\cB{\mathcal{B}}
\def\cC{\mathcal{C}}
\def\eps{\epsilon}
\def\cCgc{\cC_g^{\circ}}
\def\cP{\mathcal{P}}
\def\cPv{\mathcal{P}^{\mathrm{v}}}
\def\cPc{\mathcal{P}^{\mathrm{c}}}
\def\cTgv{\cT_g^{\mathrm{v}}}
\def\Cat{\mathrm{Cat}}
\def\Nar{\mathrm{Nar}}
\newtheorem{theo}{Theorem}
\newtheorem{lem}[theo]{Lemma}
\newtheorem{prop}[theo]{Proposition}
\newtheorem{corol}[theo]{Corollary}
\newtheorem{defin}{Definition}
\theoremstyle{remark}
\newtheorem{remark}{Remark}
\def\NN{\mathbb{N}}
\def\pp{\bm{p}}
\def\qq{\bm{q}}
\def\wt{\mathrm{wt}}
\renewcommand\phi{\varphi}
\def\Bi{\mathrm{Bi}}
\def\BiL{\mathrm{BiL}}
\def\II{\bm{I}}
\def\JJ{\bm{J}}
\def\HH{\bm{H}}
\def\pp{\bm{p}}
\def\qq{\bm{q}}
\def\xx{\bm{x}}
\def\yy{\bm{y}}
\def\BiC{\mathrm{BiC}}
\def\KK{\bm{K}}
\def\LL{\bm{L}}
\def\MM{\bm{M}}
\def\aa{\bm{a}}
\def\bb{\bm{b}}
\def\cc{\bm{c}}
\def\CCC{\mathcal{C}}
\author[G. Chapuy, V. Féray and É. Fusy]{
Guillaume Chapuy,
Valentin F\'eray
and \'Eric Fusy}
\title{%
%Yet another bijection for unicellular maps, but this one explains it all! and
%gives a new way of computing Stanley character polynomials.\\
%Everything you always wanted to know about unicellular maps in 12 pages!
A simple model of trees for unicellular maps
}
\address[G.C.]{CNRS, LIAFA - UMR 7089, Universit\'e Paris 7, 75205 Paris Cedex, France\\
guillaume.chapuy@liafa.univ-paris-diderot.fr}
\address[V.F.]{CNRS, LaBRI - UMR 5800, Universit\'e de Bordeaux, 33405 Talence Cedex, France\\
feray@labri.fr}
\address[É.F.]{CNRS, LIX - UMR 7161, \'Ecole Polytechnique, 91128 Palaiseau Cedex, France\\
fusy@lix.polytechnique.fr}
\thanks{First and third author partially supported by the ERC grant StG
208471 -- ExploreMaps.}
\thanks{Second author partially support by ANR project PSYCO}
\keywords{one-face map, Stanley character polynomial, bijection, Harer-Zagier
formula, R\'emy's bijection.}
\begin{document}
\begin{abstract}
We consider unicellular maps, or polygon gluings, of fixed genus.
%Several formulas have been obtained in the last forty years for different
%counting problems related to these objects, a few  of them having bijective proofs.
%GC7: removed ref. to FPSAC:
A few years ago 
the first author gave a recursive bijection transforming
unicellular maps into trees, explaining the presence of Catalan numbers
in counting formulas for these objects.
In this
paper, we give another bijection that explicitly describes the
``recursive part'' of the first bijection.
As a result we obtain a very simple description
of unicellular maps as pairs made by a plane tree and a permutation-like
structure.

 All the previously known formulas follow as an immediate corollary or
easy exercise, thus giving a bijective proof for each of them, in a unified way.
For some of these formulas, this is the first bijective proof, 
{\it e.g.} the Harer-Zagier recurrence formula,
the Lehman-Walsh formula and the Goupil-Schaeffer formula.
We also discuss several applications of our construction:
we obtain a new proof of an identity related to covered maps
due to Bernardi and the first author, and 
thanks to previous work of the second author,
% GC: Valentin, t'aurais pas présenté ton résultat à un FPSAC qu'on puisse
% décommenter la parenthèse suivante:
%(FPSAC '08) 
% VF: Non malheureusement, c'était un autre papier à ce FPSAC :(
%GC3 : singulier
we give a new expression for Stanley character polynomials,
which evaluate irreducible characters of the symmetric group. Finally, we
show that our techniques apply partially to unicellular 3-constellations and
to related objects that we call quasi-3-constellations.
%
%\paragraph{R\'esum\'e.}
%Nous consid\'erons des cartes orient\'ees \`a une face de genre fix\'e.
%%GC3
%\`A SFCA'09 le premier auteur a introduit une bijection r\'ecursive
%envoyant une carte unicellulaire vers un arbre, ce qui permet
%d'obtenir des formules \'enum\'eratives pour les cartes \`a une face
%(et en particulier la pr\'esence des nombres de Catalan).
%Dans l'article ici pr\'esent, et en nous appuyant sur la bijection ci-dessus, 
%nous obtenons une incarnation tr\`es simple
%des cartes \`a une face comme des paires form\'ees d'un arbre plan et d'une 
%permutation d'un certain type. Toutes les formules pr\'ec\'edemment connues
%d\'ecoulent ais\'ement de cette nouvelle incarnation, donnant des preuves
%bijectives dans un cadre unifi\'e. Pour certaines de ces formules, telles que
% la r\'ecurrence de Harer-Zagier
%ou les formules de Lehman-Walsh/Goupil-Schaeffer, nous obtenons la premi\`ere
%preuve bijective connue. Par ailleurs, en combinant notre approche avec des
% travaux du second auteur, nous obtenons une nouvelle expression
%pour les polynômes de Stanley qui donnent certaines \'evaluations des
%caract\`eres du groupe sym\'etrique. 
%
%
\end{abstract}
\maketitle

%GC: tentative d'intro. Je vais droit au but, puis donne quelques détails.
% J'aimerais faire plus long mais pas trop la place.
\section{Introduction}
% GC: rajouter quelques références là-dedans, mais ne pas tout mettre au risque
% de finir avec une bilio de 3 pages (s'en tenir à ce qu'on cite déjà plus
% tard?)
% VF: j'ai mis comme référence générale sur le sujet et ses connexions avec
% le projet Appollo 13 le bouquin de Zvonkine\dots
% ça a le mérite de ne pas allonger la biblio et de décourager tout lecteur
% de chercher à en savoir plus :), mais je ne sais pas si ça vous va.
A unicellular map is a connected graph embedded in a surface in such a
way that the complement of the graph is a topological disk.
These objects have appeared frequently in combinatorics in the last forty
years, in relation with the general theory of map enumeration, but also with
the representation theory of the symmetric group, the study of
%VF10: added ``the computation of matrix integrals''
permutation factorizations, the computation of matrix integrals
or the study of moduli spaces of curves.
%Many formulas have been obtained for different counting problems related
%to these objects, with different techniques.
%, and a few of them have been given bijective proofs
All these connections have turned the enumeration of unicellular maps
into an important research field (for the many connections with other areas,
see \cite{ZvLa97} and references therein; for an overview of the results see
the introductions of the papers \cite{Ch09,BeHa})
%GC2013: je n'ai pas voulu ajouter la phrase suivante pourtant demandée
%explicitement par l'un des referees:
% Note however that the present paper answers the open questions left in \cite{Ch09} 
The counting formulas for unicellular maps that appear in the literature 
can be roughly separated into two types.
% EF9 types plutot que families. 
% GC2: je mets mon papier et celui d'Olivier comme références générales
% supplémentaires, ca sur les cartes à une face c'est tout de même plus fourni

The first type deals with {\em colored} maps
(maps endowed with a %EF9 mapping plutot que application
mapping from its vertex set to a set of $q$ colors). 
This implies ``summation'' enumeration formulas 
(see \cite{HaZa86,%Ja88,
%VF: I removed this reference because I didn't find the
%``Jackson summation formula'' in this article (even under a different form)
%Perhaps I have missed it, or the reference is not good, or Jackson never wrote
%such formulas.
ScVa08,MoVa09} or paragraph \ref{SubsectColoredMaps} below).
These formulas are often elegant, and different combinatorial proofs for
them have been given in the past few years \cite{La01,GoNi,ScVa08,MoVa09,BeHa}.
The issue is that some important topological information, such as the genus of the surface,
is 
% GC2: ``lost'' n'est-il pas un peu abusif ?
%lost
not apparent
in these constructions.

Formulas of the second type keep track 
%GC2: added
explicitly
of the genus of the surface; they are either inductive relations, like the Harer-Zagier recurrence
formula~\cite{HaZa86}, 
%EF enleve cumbersome qui est un peu negatif
or are explicit (but quite involved) closed forms, like the Lehman-Walsh~\cite{LeWa72} 
and the Goupil-Schaeffer~\cite{GoSc00} formulas. 
From a combinatorial point of view, these formulas are harder to understand.
%GC3: modifié, Valentin, tu peux en enlever à nouveau si tu as besoin de place
%pour ce que tu vas rajouter ici.
A step in this direction was done by the first author in \cite{Ch09}
(this construction is explained in subsection \ref{SubsectTrisections}),
which led to new induction relations and to new formulas. 
However the link with other formulas of the second type remained mysterious,
and \cite{Ch09} left open the problem of finding combinatorial proofs of these
formulas.
%GC3 je ne suis pas satisfait du style mais je suis content de l'avoir dit...

%GC2: reformulated
%that left open 
%
%the problem of finding
%combinatorial proofs of the formulas in this second family.

The goal of this paper is to present a new bijection between unicellular maps
and surprisingly simple objects which we call \emph{C-decorated} trees
(these are merely plane trees equipped with a certain kind
of permutation on their vertices).
%VF 28/11: j'ai rajouté la phrase suivante, car l'intro ne parlait pas du
%tout de la construction elle-même mais juste du résultat et de ses corollaires
This bijection,
% GC8: ajouté le plan de papier en même temps que l'intro, du genre:
presented in Section~\ref{SectBij},
is based on the previous work of the first author \cite{Ch09}:
we explicitly describe the ``recursive part'' appearing in this work.
As a consequence, not only can we reprove all
the aforementioned formulas in a bijective way, thus giving the first bijective
proof for several of them, but we do that in a unified way.
Indeed, C-decorated trees are so simple combinatorial objects that all
formulas follow from our bijection as an immediate corollary or easy exercise, 
% GC8 added
as we will see in Section~\ref{SectCounting}.

Another interesting application of this bijection,
% GC8 added
studied in Section~\ref{SectStanley},
is a new explicit way
of computing the so-called Stanley character polynomials.
%VF10 : j'ai coupe cette phrase
The latter are nothing but the evaluation of irreducible characters of the symmetric
groups, properly normalized and parametrized. Indeed, in a previous work~\cite{Fe}, the
second author expressed these polynomials as a generating function of (properly
weighted) unicellular maps.
Although we do not obtain a ``closed form'' expression (there is no reason to
believe that such a form exists!), we express Stanley character polynomials as
the result of a term-substitution in free cumulants,
which are another meaningful quantity in
representation theory of symmetric groups.
% VF: j'ai changé la fin de cette phrase, car les cumulants libres n'ont pas
% d'expression explicite, comme l'annonçait Guillaume.
% GC2: ne pourrait-on pas tout de même dire que tout cela est
% ``computationnel'' ?

% GC: après cette vague intro, un peu plus de contenu
%Let us now describe more precisely our method and results. In genus $0$,
%unicellular maps coincide with plane trees, and are enumerated by Catalan
%numbers. In higher genus, the number of unicellular maps of a given size is the
%product of a polynomial and a genus dependent polynomial. A combinatorial proof
%of that fact was given only recently by the first author~\cite{Ch09}

%\alert{renvoi vers la partie : constellations}
%GC8: voici une proposition
In Section~\ref{SectConstellations} we discuss the possibility of
applying our tools to \emph{$m$-constellations}.
%VF10 : j'ai coupe cette phrase
This notion is a generalization of the notion bipartite maps introduced 
in connection with the study of factorizations in the symmetric group.
%EF9 ai un peu reecrit la phrase
A remarkable formula by Poulalhon and Schaeffer~\cite{PoulalhonSchaefferConstellations} 
(proved with the help of algebraic tools) suggests the possibility of a combinatorial proof using technique similar to ours. 
Although our bijection does not apply to these objects, we present two
partial results in this direction, in the case of $3$-constellations.
One of them 
is an enumeration formula 
for a related family of objects that we call \emph{quasi-$3$-constellations},
that turns out to be  surprisingly similar 
to the Poulalhon-Schaeffer formula.

\section{The main bijection}
\label{SectBij}
\subsection{Unicellular maps and \cdec trees}
%VF10: ajoute un peu de liant
%GC2013: denomination -> terminology
We first briefly review some standard terminology for maps.

A \emph{map} $M$ of genus $g\geq 0$ is a connected graph $G$ embedded on a
closed compact oriented\footnote{Maps can also be defined on non-orientable
surfaces. However, for non-orientable surfaces, only an asymptotic
version~\cite{BernardiChapuyNonOrientable} of the bijection of \cite{Ch09}
has been discovered so far. Since the (complete) bijection of \cite{Ch09} is an essential building block
of all the results presented in the present paper, we will not talk of
non-orientable surfaces in this paper. }
 surface $S$ of genus $g$, such that
%GC: il faut mettre ``oriented'' et non ``orientable'' surface, sinon ça ne
%colle pas vraiment
$S\backslash G$ is a collection of topological disks, which are called the \emph{faces}
of $M$. Loops and multiple edges are allowed. The (multi)graph $G$ is called the \emph{underlying graph} of $M$ and $S$ its
\emph{underlying surface}. Two maps
that differ only by an oriented homeomorphism between the underlying surfaces
are considered the same.
A corner of $M$ is the angular sector between two consecutive
edges around a vertex. A \emph{rooted map} is a map with a marked corner, called
the \emph{root}; 
the vertex incident to the root is called the \emph{root-vertex}. 
%EFMay rajoute cas carte un sommet
By convention, the map with one vertex and no edge (of genus $0$)  
is considered 
as rooted at its unique vertex (the entire sector around the vertex
is considered as a corner, which is the root).%   
From now on, all maps are assumed to be rooted (note that the underlying
graph of a rooted map is naturally vertex-rooted). A \emph{unicellular map} is a map with a unique face. The classical Euler relation $|V|-|E|+|F|=2-2g$ ensures that a unicellular map with $n$ edges has $n+1-2g$ vertices. 
 A \emph{plane tree}
is a unicellular map of genus $0$.

A \emph{rotation system} on a connected graph $G$ consists in a cyclic ordering of the
half-edges of $G$ around each vertex. Given a map $M$, its underlying graph
$G$ is naturally equipped with a rotation system given by the \emph{clockwise
ordering} of half-edges on the surface in a vicinity of each vertex. It is
well-known 
% GC: pas de ref pour gagner 3cm...
that this correspondence is $1$-to-$1$, i.e., a map can be
considered as a connected graph equipped
with a rotation system (thus, as a purely combinatorial object).
We will take this viewpoint from now on. \medskip

%VF10: ajoute un peu de liant (suite)
We now introduce a new object called \cdec tree.

A \emph{cycle-signed} permutation is a permutation where each cycle
carries a sign, either $+$ or $-$. A \emph{C-permutation} is a cycle-signed
permutation where all cycles have odd length, see Figure~\ref{fig:C-tree}(a). 
For each C-permutation $\sigma$ on $n$ 
elements, the \emph{rank} of $\sigma$ is defined as $r(\sigma)=n-\ell(\sigma)$,
where $\ell(\sigma)$ is the number of cycles of $\sigma$. Note that $r(\sigma)$ is even since all cycles have odd
length. The \emph{genus} of $\sigma$ is defined as $r(\sigma)/2$.  
A \emph{\cdec tree} on $n$
edges 
is a pair $\gamma=(T,\sigma)$ where $T$ is a plane tree with $n$ edges and $\sigma$
is a C-permutation of $n+1$ elements. The \emph{genus} of $\gamma$ is defined to be the genus of $\sigma$.  Note that the $n+1$ vertices of $T$
can be canonically numbered from $1$ to $n+1$ ({\it e.g.},
following a left-to-right depth-first traversal), 
hence $\sigma$ can be seen as a permutation of the vertices of $T$, see Figure~\ref{fig:C-tree}(c). 
The \emph{underlying graph} of $\gamma$ is the (vertex-rooted) graph $G$
%EFMay add with n edges 
with $n$ edges 
that is  obtained
from $T$ by merging into a single vertex the vertices in each cycle of $\sigma$ 
(so that the vertices of $G$ correspond to the cycles of $\sigma$), see Figure~\ref{fig:C-tree}(d).  

\begin{figure}
%\begin{flushright}
%  \begin{minipage}{9cm}
  \includegraphics[width=\linewidth]{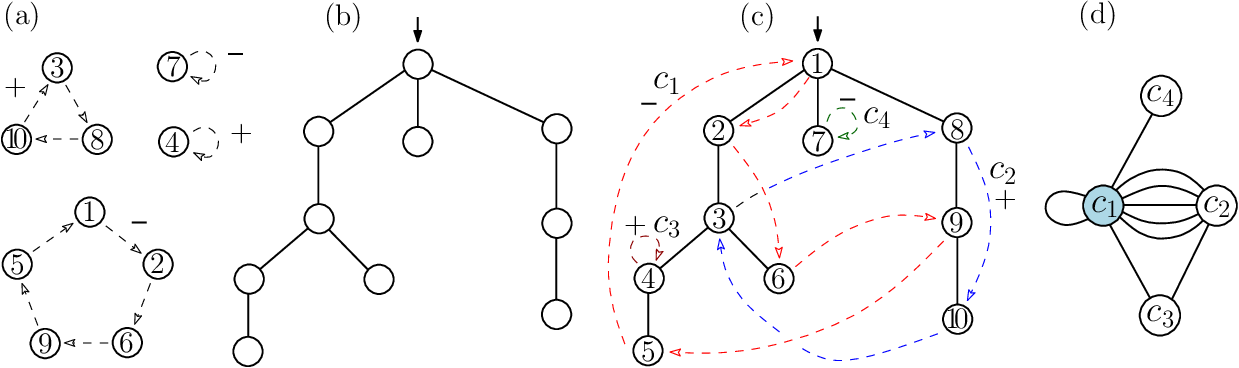}
%\end{minipage}\hfill
%\begin{minipage}{35mm}
%EF6: ai remis les captions des figures en inline
\caption{
(a) \ A C-permutation $\sigma$. (b) \  A plane tree $T$. 
 (c) \ The \cdec tree $(T,\sigma)$. (d) \  The underlying graph of $(T,\sigma)$.
}
\label{fig:C-tree}%\smallskip
%\
%\hspace{1cm}\begin{minipage}{4cm}
%\begin{enumerate}[topsep=0pt,partopsep=0pt, itemsep=0pt, parsep=0pt,
%    leftmargin=15.5pt]
% (a) \ A C-permutation $\sigma$. \\
 % (b) \  A plane tree $T$.
 % \end{minipage} \hfill
 % \begin{minipage}{6cm}
 %(c) \ The \cdec tree $(T,\sigma)$.  \\
% (d) \  The underlying graph of $(T,\sigma)$.
% \end{minipage}\hspace{1cm}
%\end{enumerate}
%\end{flushright}
%GC2: Éric, j'ai sacagé ta figure, mais gagné quelques millimètres... 
% J'ai aussi déplacé les lettre a,b,c,d dans la figure pour gagner de la
%place
%VF5: j'ai remis en page cette figure pour adapter au style amsart
\end{figure}

\begin{defin}
For $n,g$ nonnegative integers, denote by $\cE_g(n)$ the set of unicellular
maps of genus $g$ with $n$ edges; and denote by $\cT_g(n)$ the set
of \cdec trees of genus $g$ with $n$ edges. 
\end{defin}
%VF10: ajoute la def de la notation +
For two finite sets $\cA$ and $\cB$, we denote by $\cA + \cB$ their disjoint
union and by $k\cA$ the set made of $k$ disjoint copies of $\cA$.
Besides, we write $\cA\simeq\cB$ if there is a 
bijection between $\cA$ and $\cB$. Our main result will be to show that
$2^{n+1}\cE_g(n)\simeq \cT_g(n)$,
%VF : je pense qu'il faut préciser ici que ça préserve le graphe sous-jacent.
% Sinon on se demande pourquoi on ne le formule pas de manière énumérative
with a bijection which preserves the underlying graphs of the objects.

\subsection{Recursive decomposition of unicellular maps}
\label{SubsectTrisections}
In this section, we briefly recall a combinatorial method developed in~\cite{Ch09} to decompose
unicellular maps. 
\begin{prop}[Chapuy~\cite{Ch09}]\label{prop:Eg_decomp}
For $k\geq 1$, denote by $\cE_g^{(2k+1)}(n)$ the set of maps from $\cE_g(n)$
in which a set of $2k+1$ vertices is distinguished. Then for $g>0$ and $n\geq 0$,
\begin{align}\label{eq:trisectionrec-maps}
2g\ \!\cE_g(n)\simeq
\cE_{g-1}^{(3)}(n)+\cE_{g-2}^{(5)}(n)+\cE_{g-3}^{(7)}(n)+\cdots+\cE_{0}^{(2g+1)}(n).
\end{align}
In addition, if $M$ and $(M',S')$ are in correspondence,
then the underlying graph of $M$ is obtained from the underlying graph of $M'$
by merging the vertices in $S'$ into a single vertex. 
\end{prop}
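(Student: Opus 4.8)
The plan is to reconstruct the ``trisection'' surgery of \cite{Ch09}. Fix $M \in \cE_g(n)$. Since $M$ has a single face, of degree $2n$, I would first record the canonical tour of this face starting at the root corner: it visits all $2n$ corners exactly once, and hence equips the corners with a total order, which I encode by labels $1,\dots,2n$. Transporting these labels through the rotation system, each vertex $v$ inherits the cyclic sequence of labels of its corners read clockwise. I would then define a \emph{trisection} to be a distinguished type of corner extracted from this local data (a suitable ``descent'' of the cyclic sequence attached to its vertex). The first key step, the \emph{trisection lemma}, is to show that $M$ carries exactly $2g$ trisections; I would prove this by a direct, self-contained analysis of the face tour and the associated chord structure, independently of the surgery below. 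This lemma is precisely what produces the coefficient $2g$ on the left-hand side of \eqref{eq:trisectionrec-maps}, since an element of $2g\,\cE_g(n)$ is a genus-$g$ map together with a distinguished trisection.

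Next I would describe the surgery itself. To a trisection $\tau$, sitting at a vertex $v$, I would attach an intrinsic integer $k=k(\tau)\ge 1$ read off from the label pattern around $v$, and then \emph{slice}: cut the local rotation system at $v$ into $2k+1$ arcs in the manner dictated by $\tau$ and reassemble them as $2k+1$ separate vertices, forming a distinguished set $S'$. The effect on the surface is to remove $k$ handles, so the output is a unicellular map $M'$ of genus $g-k$ on the same $n$ edges, with $|S'|=2k+1$. A bookkeeping check confirms the consistency and simultaneously proves the underlying-graph claim: $M'$ has $n+1-2(g-k)$ vertices, merging the $2k+1$ vertices of $S'$ into one removes $2k$ of them, returning $n+1-2g$ vertices, and no edge is ever created or destroyed. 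Thus slicing sends a type-$k$ trisection into $\cE_{g-k}^{(2k+1)}(n)$, and partitioning the $(M,\tau)$ according to the value of $k\in\{1,\dots,g\}$ produces exactly the disjoint union on the right-hand side of \eqref{eq:trisectionrec-maps}.

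The substance of the argument is to invert this surgery. Given $(M',S')$ with $|S'|=2k+1$, I would define the \emph{gluing} that merges the vertices of $S'$ into a single vertex by interleaving their clockwise corner sequences in the one canonical order supplied by the face tour of $M'$ (this is why an unordered set $S'$ already carries enough information: the tour orders its elements for free). One then checks that this particular intertwining keeps the map unicellular and raises the genus by exactly $k$ (by the same Euler bookkeeping, $|V|$ drops by $2k$ while $F=1$), and that it creates exactly one new trisection, of type $k$, at the merged vertex. The core verification is that slicing and gluing are mutually inverse: slicing a glued vertex at its newly created trisection returns $(M',S')$, and gluing after slicing returns $(M,\tau)$.

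I expect the main obstacle to be exactly this invertibility, and within it the claim that, among the many ways to interleave $2k+1$ rotation systems into one, the tour-order interleaving is the \emph{unique} one that both raises the genus by exactly $k$ and produces a map whose single new trisection slices back to the original configuration. Making precise the combinatorial data carried by a trisection, namely the value $k(\tau)$ and the induced cyclic partition of the corners of $v$ into $2k+1$ arcs, and matching it bijectively with the data of a canonically ordered merge of $2k+1$ marked vertices, is where all the care is needed. By contrast, the trisection lemma's count and the Euler-relation bookkeeping are comparatively routine once the surgery has been set up correctly.
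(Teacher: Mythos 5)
Your overall architecture (trisection lemma, slicing surgery, inverse gluing) matches the spirit of~\cite{Ch09}, but your plan deviates from the actual construction in a way that creates a genuine gap rather than an alternative proof. The construction behind this proposition is \emph{recursive}: a trisection is always sliced into exactly \emph{three} pieces (dropping the genus by exactly $1$), the $2n$ corners of the resulting map are then \emph{relabelled} from scratch, and one tests whether the inherited corner $c_3$ is minimal at its vertex or is a trisection of the new map, recursing in the latter case. The integer $k$ is the a posteriori depth of this recursion; it is a global quantity, visible only through the successive relabellings. Your plan instead postulates an ``intrinsic integer $k(\tau)$ read off from the label pattern around $v$'' together with a canonical partition of the corners of $v$ into $2k+1$ arcs, and a one-shot $(2k+1)$-fold gluing as inverse. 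Neither of these objects is defined anywhere in your proposal, and defining them is not a deferred technicality: it is essentially the content of the theorem. (Indeed, making the ``recursive part'' of~\cite{Ch09} explicit is the main point of the present paper, and it succeeds only by abandoning direct surgery on maps in favour of C-permutations.)

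Moreover, one of the few concrete claims in your plan contradicts the trisection lemma you yourself invoke. If $M'\in\cE_{g-k}(n)$, it has exactly $2(g-k)$ trisections, while the glued map $M\in\cE_g(n)$ has $2g$; so any gluing of $2k+1$ vertices increases the trisection count by $2k$, never by $1$ (even for $k=1$ it creates two). Consequently the backward map cannot simply return ``the single new trisection'': it must select one canonical trisection among several, and the real difficulty --- which your ``uniqueness of the interleaving'' framing does not address --- is to show that \emph{every} one of the $2g$ trisections of every genus-$g$ map is the canonical trisection of exactly one gluing configuration; that is precisely what makes the left-hand side $2g\,\cE_g(n)$ rather than a proper subset of it. Finally, your Euler-formula ``bookkeeping'' is circular: it derives the genus drop from unicellularity of the sliced map, but unicellularity of $M'$ is itself a key lemma of~\cite{Ch09}, proved about the specific corners $c,c',\tau$ used in the slicing, not something that can be assumed.
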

We now sketch briefly the construction of \cite{Ch09}. Although this is not really
needed for the sequel, we believe that it gives a good insight into the objects
we are dealing with (readers in a hurry may take
Proposition~\ref{prop:Eg_decomp}
for granted and jump directly to subsection \ref{subsec:apresrappel}). We refer to \cite{Ch09} for proofs and details.

We first explain where the factor $2g$ comes from in
\eqref{eq:trisectionrec-maps}. Let $M$ be a rooted unicellular map of genus $g$
with $n$ edges. Then $M$ has $2n$ corners, and we label them from $1$ to $2n$
incrementally, starting from the root, and going 
% GC2: j'hésite à chaque fois, mais je pense que c'est clockwise ?
% ça dépend si on se voit à l'intérieur ou à l'extérieur de la face...
clockwise around the (unique) face of
$M$ (Figure~\ref{fig:maptrisection}). Let $v$ be a vertex of $M$, let $k$ be
its degree, and let $(a_1,a_2,\dots,a_k)$ be the sequence of the labels of
corners incident to it, read in 
%EFMay il me semble que  c'est plutot clockwise en reagardant la figure
\emph{clockwise direction around $v$} starting
%\emph{counterclockwise direction around $v$} starting
from the minimal label $a_1=\min \{a_i\}$. If for some $j$ lying in 
$\llbracket
1,k-1\rrbracket$ ({\em i.e.} in the set of integers between $1$ and $k-1$,
including $1$ and $k-1$), we have
$a_{j+1}<a_j$, we say that the corner of $v$ labelled by
$a_{j+1}$ is a \emph{trisection of $M$}.
Figure~\ref{fig:maptrisection}(a) shows a map of genus two having four
trisections. More generally we have:
%The  following 
%lemma is the keystone of~\cite{Ch09}:
\begin{lem}[\cite{Ch09}]\label{lem:trisectionscount}
  A unicellular map of genus $g$ contains exactly $2g$ trisections. In other
  words,
  the set of unicellular maps of genus $g$ with $n$ edges and a marked
  trisection is isomorphic to $2g\ \!\cE_g(n)$.
\end{lem}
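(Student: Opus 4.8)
The plan is to prove that a unicellular map of genus $g$ with $n$ edges has exactly $2g$ trisections by a counting argument that compares the local corner-orderings around vertices with the global cyclic order induced by the single face. I will set up a quantity that measures the "mismatch" between these two orders and show, via the Euler relation, that the total mismatch equals $2g$.

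\medskip

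\noindent\textbf{Setup.} First I would fix the labelling of the $2n$ corners $1,\dots,2n$ by the clockwise traversal of the unique face, as in the statement. The key observation is that this face-traversal visits the corners in a single cycle, and the successor of a corner along the face is determined by the rotation system: from a corner at vertex $v$ one crosses an edge and lands at the next corner in the clockwise order around the neighbouring vertex. I would make precise the following local picture at a vertex $v$ of degree $k$: reading the corner-labels $a_1<$ (start) in counterclockwise order around $v$ gives a cyclic sequence $(a_1,\dots,a_k)$, and a trisection at $v$ is by definition a descent $a_{j+1}<a_j$ with $1\le j\le k-1$ (the wrap-around from $a_k$ back to $a_1$ is excluded since we start from the minimum $a_1$). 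Thus the number of trisections at $v$ is exactly the number of descents in the linear sequence $(a_1,\dots,a_k)$ read from its minimum.

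\medskip

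\noindent\textbf{Counting by a telescoping/consistency argument.} The plan is to relate the total number of trisections to the discrepancy between the number of vertices and the number of "ascending runs" forced by the face structure. Concretely, for each vertex $v$ of degree $k$, the sequence of labels read counterclockwise from the minimum splits into maximal increasing runs; the number of descents at $v$ is (number of runs at $v$) $-1$. Summing over all vertices, the total number of trisections equals $\bigl(\sum_v \#\text{runs}(v)\bigr) - |V|$. The heart of the argument is then to compute the total number of runs over all vertices. Here I would use the fact that the face is a single closed walk of length $2n$ and that crossing an edge along the face relates a corner at one vertex to a corner at another; a run-boundary at $v$ (an index where $a_{j+1}<a_j$, together with the conventional break at the minimum) corresponds to a place where the global face-order and the local vertex-order disagree, and these disagreements can be matched against edges of a spanning structure of the map. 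The cleanest route is to show directly that $\sum_v \#\text{runs}(v) = n+1 = |E|+1$, using the single-face/single-cycle property to track how the $2n$ corners distribute into runs.

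\medskip

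\noindent\textbf{Conclusion via Euler.} Once the total number of runs is identified, the number of trisections becomes $\bigl(\sum_v\#\text{runs}(v)\bigr)-|V|$, and substituting the value of the run-count together with the Euler relation $|V|-|E|+|F|=2-2g$ with $|F|=1$ (so $|V|=n+1-2g$) should collapse the expression to exactly $2g$. The final sentence restating the count as an isomorphism $2g\,\cE_g(n)\simeq\{\text{maps with a marked trisection}\}$ is then immediate, since marking one of the $2g$ trisections is precisely a choice among $2g$ copies.

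\medskip

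\noindent\textbf{Main obstacle.} I expect the delicate step to be the exact bookkeeping of the total run-count $\sum_v\#\text{runs}(v)$: one must argue carefully how the face's clockwise traversal interacts with the counterclockwise local readings and with the convention of starting each vertex at its minimum label, so that no run-boundary is double-counted or missed (in particular handling the root corner and the wrap-around at each vertex consistently). Getting the "$+1$" in $n+1$ exactly right—rather than off by the number of vertices or by a constant—is where the argument must be watertight, and this is presumably where the original reference \cite{Ch09} does the real work.
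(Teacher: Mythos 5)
First, a point of calibration: the paper you were compared against contains no proof of this lemma at all --- it is quoted from \cite{Ch09}, and the authors explicitly refer the reader there ``for proofs and details.'' So the relevant comparison is with Chapuy's original argument, which is indeed a descent-counting argument of exactly the shape you set up. Your reduction is correct as far as it goes: by the definition of trisections, the trisections at a vertex $v$ are precisely the descents of the label sequence read counterclockwise from the minimum, so the total number of trisections equals $\bigl(\sum_v \#\mathrm{runs}(v)\bigr)-|V|$, and since Euler's relation gives $|V|=n+1-2g$, the lemma is \emph{equivalent} to the identity $\sum_v \#\mathrm{runs}(v)=n+1$. The genuine gap is that this identity is the entire content of the lemma, and you never prove it: you announce that ``the cleanest route is to show directly'' that it holds, and in your closing paragraph you concede that this step is ``presumably where the original reference does the real work.'' A proof whose only nontrivial step is deferred to the reference is a reformulation, not a proof.

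For the record, here is what fills the gap. Equivalently, one must show that the total number of \emph{ascents} is $n-1$ (ascents and descents at $v$ sum to $\deg(v)-1$, and $\sum_v(\deg(v)-1)=2n-|V|$). The key fact relating the face tour to the rotation system is local: if $d$ is the corner following $c$ counterclockwise around $v$, and $h$ is the half-edge of $v$ separating $c$ from $d$, then the tour enters $d$ immediately after visiting the corner $c^*$ lying at the other end of that edge, on the side of the opposite half-edge $\alpha(h)$; hence $\ell(d)=\ell(c^*)+1 \pmod{2n}$. Under the natural bijection between corners and half-edges, the map $c\mapsto c^*$ is a fixed-point-free involution whose orbits are the $n$ edges of the map. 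Consequently the pair $(c,d)$ is an ascent if and only if $\ell(c^*)>\ell(c)$ and $\ell(c^*)\neq 2n$ (in the wrap case $\ell(c^*)=2n$ the corner $d$ is the root corner, which is the minimum at its vertex, so that pair is excluded by the start-at-minimum convention). Each edge contributes exactly one corner whose partner carries the larger label, giving $n$ such corners, and exactly one of them --- the partner of the corner labelled $2n$ --- falls into the excluded wrap case. So the number of ascents is $n-1$, hence $\sum_v\#\mathrm{runs}(v)=n+1$, and your computation then yields $2g$ trisections. This edge-involution bookkeeping is precisely the ``watertight'' step you identified but did not carry out.
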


Now, let $\tau$ be a trisection of $M$ of label $a(\tau)$, and let $v$ be the vertex it belongs to.
We denote by $c$ the corner of $v$ with minimum label
and by $c'$ the corner with minimum label among those which
appear between $c$ and $\tau$ clockwise around $v$ and whose label is greater
than $a(\tau)$. By
definition of a trisection, $c'$ is well defined. We
then construct a new map $M'$, by
\emph{slicing} the vertex $v$ into three new vertices using the three corners
$c,c',\tau$ as in Figure~\ref{fig:maptrisection}(b). We say
that the map $M'$ is obtained from $M$ by \emph{slicing the
trisection $\tau$.}
% GC2: enlevé un saut de ligne, gagne un petit cm
As shown in \cite{Ch09}, the new map $M'$ is a unicellular map of genus $g-1$.
We can thus relabel the $2n$ corners of $M'$ from $1$ to $2n$, according to the
procedure we already used for $M$. Among these corners, three of them, say
$c_1,c_2,c_3$ are naturally inherited from the slicing of $v$, as on
Figure~\ref{fig:maptrisection}(b). Let $v_1, v_2, v_3$ be the vertices they
belong to, respectively. Then the following is true~\cite{Ch09}: {\it In the
map $M'$, the corner $c_i$ has the smallest label around the vertex $v_i$, for
$i\in\{1,2\}$. For $i=3$, either the same is true, or $c_3$ is a trisection of the
map $M'$.}

\begin{figure}
\includegraphics[scale=1]{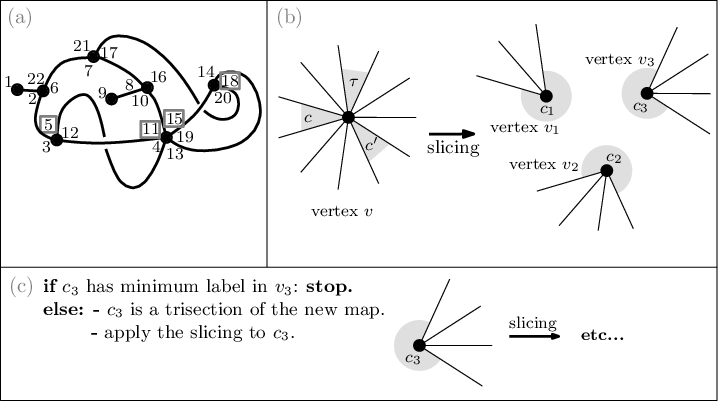}
  \caption{
(a) A unicellular map of genus $2$ equipped with its corner labelling.
Labels corresponding to trisections are boxed.
(b) Given a trisection $\tau$, two other corners of interest $c$ and $c'$ are
canonically defined (see text). ``Slicing the trisection'' then gives rise to three
new vertices $v_1, v_2, v_3$, with distinguished corners $c_1,c_2,c_3$. (c) The recursive
procedure of~\cite{Ch09}: if $c_3$ is the
minimum corner of $v_3$, then stop; else, as shown in \cite{Ch09},  $c_3$ is a
trisection of the new map $M'$: in this case, iterate the slicing operation on
$(M',c_3)$.}\label{fig:maptrisection}
\end{figure}
%VF5: il faut remettre en page cette figure,
%mais je laisse faire l'artiste sur ce coup-là..
%GC7: vive le style amsart, j'y ai passe deux heures...
We now finally describe the bijection promised in 
Proposition~\ref{prop:Eg_decomp}. It is defined recursively on the genus, as
follows. Given a map $M\in \cE_g(n)$ with a marked 
trisection $\tau$, let $M'$ be obtained from $M$ by slicing $\tau$, and
let $c_i,v_i$ be defined as above for $i\in\{1,2,3\}$. If $c_3$ has
the minimum label at $v_3$, set $\Psi(M,\tau):=(M',\{v_1,v_2,v_3\})$, which is
an element of $\cE^{(3)}_{g-1}(n)$. Otherwise,
let $(M'',S)=\Psi(M',c_3)$, and set
$\Psi(M,\tau):=(M'',S\cup\{v_1,v_2\})$.
Note that this recursive algorithm necessarily stops, since the genus of the map
decreases and since there are no trisections in unicellular maps of genus $0$ (plane
trees).
Thus this procedure yields recursively a mapping that associates to a 
unicellular map
$M$ with a marked trisection $\tau$ another unicellular map $M''$ of a smaller genus, with a
set $S''$ of marked vertices (namely the set of vertices which have been
involved in a slicing at some point of the procedure). The set $S''$ of marked
vertices necessarily has odd cardinality, as easily
seen by induction. Moreover, it is clear that the underlying graph of $M$
coincides with the underlying graph of $M''$ in which the vertices of $S''$
have been identified together into a single vertex.
%Hence if we admit that $\Psi$ is a bijection between the two sides of
%\eqref{eq:trisectionrec-maps}, it also satisfies the other stated properties.
%EF9 reecrit cette phrase legerement
One can show that $\Psi$ is a bijection~\cite{Ch09}, with an  
explicit inverse mapping. 

%VF : j'ai enlevé ce paragraphe et rajouté la phrase ci-dessus 
%pour gagner de la place.
%À mon avis, la description d'une direction suffit à donner une bonne idée au
%lecteur de la construction, et, avec ou sans le paragraphe suivant, s'il
%veut vraiment être convaincu que ça marche, il faut aller voir le papier
%de Guillaume.
%Before ending our sketch of the construction of \cite{Ch09}, let us say a few
%words about the reverse construction, that constructs a unicellular map $M$
%with a marked trisection, starting from a unicellular map $M'$ of smaller genus
%with a marked set of vertices $S$. We already said how to obtain the underlying
%graph of $M$: take the graph of $M'$ and identify all the vertices of $S$ into
%a single vertex. So to define the map $M$ it only remains to define the cyclic ordering around each
%vertex: for vertices not belonging to $S$, we keep the same
%ordering as in $M'$. For others, the cyclic ordering is defined recursively
%from the one of $M'$, thanks to a recursive gluing procedure that ``mimics'',
%at reverse, the successive slicings we have performed here: we refer to
%\cite{Ch09} for a complete description of this gluing procedure.

\subsection{Recursive decomposition of \cdec trees} \label{subsec:apresrappel}
We now propose a recursive method to decompose \cdec trees, which can be seen
as  parallel to the decomposition of unicellular maps given in the previous
section. Denote by $\cC(n)$ (resp. $\cC_g(n)$) 
the set of C-permutations on $n$ elements (resp. on $n$ elements and of genus $g$). 
A \emph{signed sequence} of integers is a pair $(\eps,S)$ where $S$ is an
integer  sequence and $\eps$ is a sign, either $+$ or $-$. We will often
write signed sequences with the sign preceding the sequence as a exponent, such as $^\epsilon S$. 

\begin{figure}
\begin{center}
\includegraphics[width=12cm]{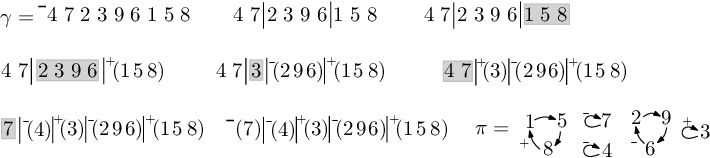}
\end{center}
\caption{The bijection between signed sequences and $C$-permutations.}
\label{fig:bij_min}
\end{figure}

\begin{lem}\label{lem:iso_signed}
Let $X$ be a finite non-empty set of positive integers. Then there is a bijection between signed
sequences of distinct integers from $X$ ---all elements of $X$ being present in the sequence--- and C-permutations on the set $X$. 
In addition the C-permutation has one cycle if and only if the signed
sequence has odd length  and starts with its minimal element. 
%Let $\cUp(n)$ be the set of permutations from $\cC(n)$ with a unique cycle,
%and where a non-minimal element is marked. Let $\cO(n)$ be the subset
%of C-permutations in $\cC(n)$ that have an odd number $2k+1>1$ of (odd) cycles 
%(note that $\cUp(n)$ and $\cC(n)$ are empty if $n$ is odd). 
 %Then
%$$
%\cUp(n)\simeq \cO(n).
%$$
\end{lem}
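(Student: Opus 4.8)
The plan is to exhibit an explicit bijection, organised so that the single-cycle case carries the difficulty and several cycles are then handled by concatenation. As a preliminary check, both sides have cardinality $2\cdot|X|!$: this is immediate for signed sequences, and for C-permutations it follows from the exponential generating function of signed odd cycles being $\log\frac{1+x}{1-x}$, so that C-permutations have exponential generating function
\[ \exp\Bigl(\log\tfrac{1+x}{1-x}\Bigr)=\frac{1+x}{1-x}=1+2\sum_{n\ge1}x^{n}, \]
giving $2\cdot n!$ C-permutations on an $n$-element set for $n\ge1$. This pins down the target cardinality but not the map itself.

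First I would settle the single-cycle case, which is the core. A signed odd cycle on $X$ is turned into a sequence by reading it from $\min X$ and following the cycle, while the sign of the cycle becomes the global sign; the resulting signed sequence has length $|X|$ (odd) and starts with $\min X$. Conversely, any signed sequence $(\eps,s_1\cdots s_n)$ of odd length with $s_1=\min X$ is read back as the single cycle $s_1\to s_2\to\cdots\to s_n\to s_1$ with sign $\eps$. These two maps are mutually inverse, and they already yield the implication ``one cycle $\Rightarrow$ odd length and starts with $\min X$'' together with its converse, \emph{once} we guarantee that the general construction never cuts such a sequence into several pieces.

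For several cycles I would proceed by a recursion that peels off one cycle at a time, starting from the cycle containing the current minimum, and concatenates the corresponding sequences. A first approximation to the rule arranges the cycles by \emph{decreasing} minimum; this already makes the one-cycle criterion transparent, since the concatenated sequence begins with the global minimum $\min X$ exactly when there is a single block, whereas with two or more cycles it begins with the largest of the cycle-minima, which is not $\min X$. The inverse recursion re-reads the sequence from the left, splits off the first block, recovers its cycle, and recurses on the remainder.

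The main obstacle---and the real content of the lemma---is the handling of the signs. A C-permutation with $\ell$ cycles carries $\ell$ independent signs, while a signed sequence carries a single global sign, so the extra $2^{\ell-1}$ sign choices must be stored in the \emph{arrangement} of the sequence, and this must be done without disturbing the parity of the blocks, which all have to stay odd. For a cycle of length at least three one bit fits naturally inside the block (the orientation in which it is read from its minimum), but a fixed point is a one-letter block with no room for its sign, which is therefore forced into the relative placement of that block. This is exactly why the naive decreasing-minimum rule of the previous paragraph cannot be final: it would send every collection of signed fixed points to the \emph{same} decreasing word and could not separate their signs. I would thus refine it into a sign-aware rule that simultaneously (i) produces only odd blocks, (ii) spends the cycle-signs partly on internal orientations and partly on the order in which blocks are emitted, and (iii) is consistent with a left-to-right parsing that recovers each sign unambiguously while keeping the property that the sequence starts with $\min X$ only in the single-cycle case. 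The decisive and most delicate step is to check that this refined rule admits a genuine two-sided inverse---that the parsing always reconstitutes odd cycles and that no two distinct C-permutations, nor two distinct signed sequences, collide---and I expect the correct recovery of the signs of the short cycles to be where essentially all the work lies.
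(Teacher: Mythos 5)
Your setup is sound (the cardinality check via $\exp\bigl(\log\tfrac{1+x}{1-x}\bigr)=\tfrac{1+x}{1-x}$ is correct, and your single-cycle case agrees with the paper's base case), and you have correctly located where the difficulty sits: the $\ell$ cycle signs must be compressed into one global sign plus the arrangement of the sequence, and fixed points are exactly the cycles that cannot carry an orientation bit internally. But the proof stops precisely there. The ``sign-aware rule'' satisfying your conditions (i)--(iii) is never defined: you state what it must accomplish and then assert that checking it is invertible ``is where essentially all the work lies.'' That rule \emph{is} the lemma. Without it you have only a cardinality identity and a base case, and a cardinality identity cannot substitute for the bijection here --- the lemma is invoked later (Proposition~\ref{prop:Tg_decomp}, the colored-map formulas) in ways that use the explicit structure of the correspondence, in particular the criterion for when the output is a single cycle.

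For comparison, the paper's device is different from the one you were reaching for, and notably it needs no orientation bits at all: parsing the sequence from the right, one repeatedly cuts off the suffix $\gamma_2$ beginning at the current minimum; if $\gamma_2$ has odd length it becomes a cycle signed $+$, and if it has even length its second element is moved to the end of the prefix $\gamma_1$ and the remaining odd block becomes a cycle signed $-$. Thus every sign, whether of a fixed point or of a long cycle, is stored in the \emph{parity} of its block, with the parity defect repaired by letting one element migrate into an earlier block; the global sign of the sequence is spent on the last cycle produced. This treats all cycle lengths uniformly, which is exactly what your plan (orientation bits for long cycles, placement tricks for fixed points) was struggling to achieve, and it makes the single-cycle criterion immediate: the process terminates at the first step exactly when the sequence is odd and starts with $\min X$. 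To complete your proof you would need to supply a rule with comparable precision and verify its two-sided invertibility; as written, the proposal is a correct analysis of the problem rather than a proof.
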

\begin{proof}
The bijection is illustrated in Figure~\ref{fig:bij_min}.
Starting from a signed sequence $^\epsilon S$, decompose $S$ into blocks according to the left-to-right minimum records.
Then treat the blocks successively from right to left. At each step, if the treated block $B$ has odd length, turn $B$ into
the signed cycle $^+(B)$; if $B$ has even length, move the second element $x$ of $B$ out of $B$, insert it at the end of the block preceeding $B$, 
and then turn $B$ into the signed cycle $^-(B)$. Update the block-decomposition (according to left-to-right minimum records) 
on the left of $B$ (it is very simple, two cases occur:
if $x$ is the minimum of the elements on the left of $B$, it occupies a single block; if not, $x$ is integrated at the end of the block on the left of $B$).    
At the end of the right-to-left traversal, the last treated block $B$ has odd length, we produce $^\epsilon(B)$ as the last signed cycle. The output is the $C$-permutation
$\pi$ made of all the signed cycles that have been produced during the traversal.

Conversely (read Figure~\ref{fig:bij_min} from right to left and bottom to top), starting from a $C$-permutation $\pi$, write $\pi$ as the ordered list of its signed cycles,
each cycle starting with its minimal element, and the cycles being ordered from left to right such that the minimal elements are in descending order.   
Record the sign $\epsilon_0$ of the leftmost signed cycle $^{\epsilon_0}(B)$, and turn $(B)$ into the block $B$.  
Then treat the signed cycles from left to right (starting with the second one). 
At each step, let $^{\epsilon}(B)$ be the treated signed cycle and let $B'$ be the block to the left of $^{\epsilon}(B)$. 
Turn $^{\epsilon}(B)$ into the block $B$, and in case $\epsilon=-$, move
the last element of $B'$  to the second position of $B$ (this possibly makes $B'$ empty, in which case we erase $B'$ of the current list of blocks). 
At the end, we get an ordered list of blocks, which can be seen as a sequence $S$.   
The output is the signed sequence $^{\epsilon_0}S$.

It is easy to see that the two mappings $\Phi$ (from signed sequences to $C$-permutations) and $\Psi$ (from $C$-permutations to signed sequences) are inverse
of each other; indeed these two mappings consist of a sequence of steps that operate on hybrid structures (a sequence of blocs followed by a sequence
of signed cycles, these all start with their minimal element, and the minimal elements decrease from left to right),  
each step of $\Phi$ (resp. $\Psi$) increases (resp. decreasing) by $1$ the number of signed cycles in the hybrid structure,
and the step of $\Phi$ with $i$ signed cycles is the inverse of the step of $\Psi$ with $i+1$ signed cycles. 
\end{proof}

An element of a C-permutation is called \emph{non-minimal} if it is
not the minimum in its cycle. Non-minimal elements play the same role for
C-permutations 
% GC2 added:
(and C-decorated trees)
as trisections for unicellular maps. Indeed, a C-permutation 
% GC2 changed %in $\cC_g(n)$ 
of genus $g$
has
$2g$ non-minimal elements (compare with Lemma~\ref{lem:trisectionscount}), and
moreover we have the following analogue of Proposition~\ref{prop:Eg_decomp}:
\begin{prop}\label{prop:Tg_decomp}
For $k\geq 1$, denote by $\cT_g^{(2k+1)}(n)$ the set of \cdec trees from $\cT_g(n)$
in which a set of $2k+1$ cycles is distinguished. Then for $g>0$ and $n\geq 0$,
$$
2g\ \!\cT_g(n)\simeq    \cT_{g-1}^{(3)}+\cT_{g-2}^{(5)}+\cT_{g-3}^{(7)}+\cdots+\cT_{0}^{(2g+1)}.
$$
In addition, if $\gamma$ and $(\gamma',S')$ are in correspondence,
then the underlying graph of $\gamma$ is obtained from the underlying graph of $\gamma'$
by merging the vertices corresponding to cycles from $S'$ into a single vertex. 
\end{prop}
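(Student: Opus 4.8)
The plan is to carry out, purely at the level of C-permutations, an operation parallel to the slicing of a trisection in Proposition~\ref{prop:Eg_decomp}, keeping the plane tree $T$ fixed throughout. First I would identify the left-hand side combinatorially: in a cycle of length $\ell$ there are exactly $\ell-1$ non-minimal elements, so the C-permutation $\sigma$ of a \cdec tree $(T,\sigma)\in\cT_g(n)$, which has $n+1$ elements, carries $(n+1)-\ell(\sigma)=r(\sigma)=2g$ non-minimal elements in total. Thus marking a non-minimal element amounts to selecting one of $2g$ equivalent choices, and $2g\,\cT_g(n)$ is canonically in bijection with the set of \cdec trees of genus $g$ equipped with a distinguished non-minimal element. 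This is the exact counterpart of Lemma~\ref{lem:trisectionscount}, with non-minimal elements in the role of trisections.

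The heart of the proof is a bijection acting on the single cycle that carries the mark. Let $m$ be the marked non-minimal element, lying in a cycle $c$ of $\sigma$, and let $X$ be the (odd-sized) set of elements of $c$. I would first note that the pair ``single odd cycle $c$ together with a marked non-minimal element $m$'' is the same datum as a signed sequence on $X$ that does \emph{not} begin with its minimum: writing $c$ from its minimum and then rotating the starting point to $m$ produces exactly such a sequence, and this rotation is reversible, the sign being carried along. Since every signed sequence on $X$ has odd length, the characterization in Lemma~\ref{lem:iso_signed} says that such a sequence corresponds to a C-permutation of $X$ with more than one cycle; as the total length $|X|$ is odd, the number of cycles is odd and hence at least $3$, say $2k+1$ with $k\ge1$. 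I then replace the cycle $c$ inside $\sigma$ by these $2k+1$ cycles to obtain $\sigma'$, take $S'$ to be the set of new cycles, and leave $T$ and all other cycles of $\sigma$ unchanged.

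It remains to check the three assertions. Replacing one cycle by $2k+1$ cycles raises $\ell(\sigma)$ by $2k$, so $r(\sigma')=r(\sigma)-2k$ and $(T,\sigma')$ has genus $g-k$; with $|S'|=2k+1$ this lands the image in $\cT_{g-k}^{(2k+1)}(n)$ for some $1\le k\le g$, giving the claimed disjoint union. Since the construction alters only the cycles supported on $X$, bijectivity reduces to the local correspondence $(c,m)\leftrightarrow S'$, which is a bijection because it is the composition of the reversible rotation with Lemma~\ref{lem:iso_signed} in the regime ``sequence not starting with its minimum $\leftrightarrow$ C-permutation with an odd number $\ge3$ of cycles''; the inverse merges the cycles of $S'$ into one signed sequence via Lemma~\ref{lem:iso_signed}, reads $m$ off as its first element, and normalizes it from its minimum to recover $c$. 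Finally, because the vertices of an underlying graph correspond to the cycles of the C-permutation, splitting $c$ into the cycles of $S'$ is precisely the assertion that the underlying graph of $\gamma$ is obtained from that of $\gamma'$ by merging the vertices indexed by $S'$.

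The delicate point, and the step I would treat most carefully, is this local correspondence: one must verify that the rotation-to-$m$ encoding is a genuine two-sided inverse of normalizing a cycle at its minimum, and above all that the single sign attached to the rotated sequence is transported consistently through Lemma~\ref{lem:iso_signed}, so that merging the cycles of $S'$ returns the signed cycle $c$ with its original sign intact. Once this is secured, the counting of non-minimal elements, the genus bookkeeping, and the underlying-graph statement are all routine.
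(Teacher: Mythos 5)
Your proof is correct and takes essentially the same approach as the paper: both identify $2g\,\cT_g(n)$ with \cdec trees carrying a marked non-minimal element, write the marked signed cycle as a signed sequence beginning at the mark, and apply the bijection of Lemma~\ref{lem:iso_signed} (whose characterization of unicyclic images, combined with the parity of the cycle length, forces $2k+1\geq 3$ produced cycles), with the genus and underlying-graph statements following exactly as you describe. The only cosmetic difference is that the paper runs the argument purely at the level of C-permutations and then invokes $\cT_g(n)=\cE_0(n)\times\cC_g(n+1)$, whereas you keep the plane tree $T$ fixed throughout; also, the sign-transport issue you flag as delicate is automatic from the bijectivity of Lemma~\ref{lem:iso_signed}, so no extra verification is needed.
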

\begin{proof}
For $k\geq 1$ let $\cC_g^{(2k+1)}(n)$ be the set of C-permutations from $\cC_g(n)$
where a subset of $2k+1$ cycles are marked. Let $\cCgc(n)$ be the set
of C-permutations from $\cC_g(n)$ where a non-minimal element is marked. Note that $\cCgc(n)\simeq 2g\ \!\cC_g(n)$ since a C-permutation in $\cC_g(n)$ has $2g$ non-minimal elements.
%EF9 \sum_{k=1}^g plutot que \sum_{k\geq 1}

%GC2013-> J'ai mis "we claim that" pour faire plaisir au referee et j'ai
%commencé ici le nouveau paragraphe
We now claim that $\cCgc(n)\simeq\sum_{k=1}^g\cC_{g-k}^{(2k+1)}(n)$.
%VF10: j'ai reformulé ce qui suit, il n'était pas écrit que l'on devait écrire 
%le cycle en commençant par l'élément marqué pour appl le lemme 3.
Indeed starting from $\gamma\in\cCgc(n)$,
write the signed cycle containing the marked element $i$ of $\gamma$ as a signed sequence
beginning with $i$ and apply Lemma~\ref{lem:iso_signed} to this signed sequence:
this produces a collection of $(2k+1)\geq 3$ signed cycles of odd length,
which we take as the marked cycles.

We have thus shown that $2g\ \!\cC_g(n)\simeq\sum_{k=1}^g\cC_{g-k}^{(2k+1)}(n)$. 
Since by definition $\cT_g(n)=\cE_0(n)\times\cC_g(n+1)$, 
we conclude that $2g\ \!\cT_g(n)\simeq\sum_{k=1}^g\cT_{g-k}^{(2k+1)}(n)$. The statement
on the underlying graph just follows from the fact that the procedure in Lemma~\ref{lem:iso_signed} merges
the marked cycles into a unique cycle. 
\end{proof}

% GC: ai changé le titre de la section, pour que le lecteur pressé s'arrête à cet endroit
% (Éric avait mis ``the bijection'')
% GC2 : ajouté stochastic pour faire mieux passer l'idée
\subsection{The main result} 
\begin{theo}\label{theo:main}
For each non-negative integers $n$ and $g$ we have
$$
2^{n+1}\cE_g(n)\simeq\cT_g(n). 
$$
In addition the cycles of a \cdec tree naturally correspond to the vertices 
of the associated unicellular map, in such a way that the respective underlying
graphs are the same. 
\end{theo}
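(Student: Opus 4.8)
The plan is to prove the statement by induction on the genus $g$, with $n$ fixed, using the two parallel recursive decompositions of Propositions~\ref{prop:Eg_decomp} and~\ref{prop:Tg_decomp} as the engine. The role of the factor $2^{n+1}$ is already visible in the base case $g=0$: a unicellular map of genus $0$ is just a plane tree $T$ with $n$ edges, which is its own underlying graph, whereas a \cdec tree of genus $0$ is a pair $(T,\sigma)$ in which $\sigma\in\cC_0(n+1)$ has rank $0$, i.e.\ consists of $n+1$ signed fixed points. There are exactly $2^{n+1}$ such $\sigma$, one per choice of the $n+1$ signs, and merging the (singleton) cycles of $\sigma$ returns $T$. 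Hence $\cT_0(n)\simeq 2^{n+1}\cE_0(n)$ by a bijection that trivially preserves underlying graphs and sends cycles to vertices.

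For the inductive step ($g\ge 1$), assume the result --- bijection plus the cycles-to-vertices graph-preservation property --- for every genus smaller than $g$. Multiplying Proposition~\ref{prop:Eg_decomp} by $2^{n+1}$ gives $2g\cdot 2^{n+1}\cE_g(n)\simeq\sum_{k=1}^g 2^{n+1}\cE_{g-k}^{(2k+1)}(n)$. For each $k\ge 1$ the genus $g-k$ is strictly smaller, so the induction hypothesis applies; and because that bijection matches cycles with vertices, distinguishing $2k+1$ vertices on the map side corresponds exactly to distinguishing $2k+1$ cycles on the tree side, yielding $2^{n+1}\cE_{g-k}^{(2k+1)}(n)\simeq\cT_{g-k}^{(2k+1)}(n)$, still graph-preservingly. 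Summing over $k$ and applying Proposition~\ref{prop:Tg_decomp} produces the chain
\[
2g\cdot 2^{n+1}\cE_g(n)\ \simeq\ \sum_{k=1}^g 2^{n+1}\cE_{g-k}^{(2k+1)}(n)\ \simeq\ \sum_{k=1}^g\cT_{g-k}^{(2k+1)}(n)\ \simeq\ 2g\,\cT_g(n).
\]
The crucial point to record along the way is that every arrow is compatible with underlying graphs: in both propositions the graph of the genus-$g$ object equals the graph of the smaller-genus object with the distinguished vertices (resp.\ cycles) merged, and the middle arrow preserves graphs by the inductive hypothesis. Consequently the composite sends a map with underlying graph $G$ to a \cdec tree with underlying graph $G$.

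The one genuine obstacle is that this only produces a bijection between $2g$ copies of each side, and there is no canonical way to ``divide by $2g$'' while retaining control of the underlying graph. I would resolve this precisely by exploiting the graph-compatibility just established: fix a vertex-rooted graph $G$ and write $\cE_g(n)[G]$, $\cT_g(n)[G]$ for the subsets of objects with underlying graph $G$. The composite bijection above restricts to a bijection $2g\cdot(2^{n+1}\cE_g(n)[G])\simeq 2g\cdot(\cT_g(n)[G])$, whence $2g\,|2^{n+1}\cE_g(n)[G]|=2g\,|\cT_g(n)[G]|$ and therefore, since $g\ge 1$, $|2^{n+1}\cE_g(n)[G]|=|\cT_g(n)[G]|$. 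Choosing any bijection within each graph-class and taking the disjoint union over all $G$ then gives a bijection $2^{n+1}\cE_g(n)\simeq\cT_g(n)$ that preserves underlying graphs, and hence matches cycles with vertices, completing the induction. I expect the only delicate verifications to be the bookkeeping in the inductive step --- checking that the $2^{n+1}$ factor and the marking of $2k+1$ vertices really do transport to the marking of $2k+1$ cycles under the inductive bijection --- together with the observation that all arrows respect the merged underlying graph, which is exactly what makes the final graph-by-graph cancellation legitimate.
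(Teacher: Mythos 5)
Your proof is correct, and its skeleton is exactly the paper's: induction on $g$ with $n$ fixed, the base case of $2^{n+1}$ choices of signed fixed points, and the chain $2g\cdot 2^{n+1}\cE_g(n)\simeq\sum_k 2^{n+1}\cE_{g-k}^{(2k+1)}(n)\simeq\sum_k\cT_{g-k}^{(2k+1)}(n)\simeq 2g\,\cT_g(n)$ obtained from Propositions~\ref{prop:Eg_decomp} and~\ref{prop:Tg_decomp} together with the (marked, graph-preserving) induction hypothesis. Where you genuinely diverge is in extracting a $1$-to-$1$ correspondence from the $2g$-to-$2g$ one. The paper views that correspondence as a $2g$-regular bipartite multigraph on $2^{n+1}\cE_g(n)\sqcup\cT_g(n)$ and invokes Hall's marriage theorem to extract a perfect matching, which preserves underlying graphs because every edge of the bipartite graph does. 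You instead use graph-preservation upstream: restricting the $2g$-to-$2g$ bijection to a fixed underlying-graph class $G$ gives $2g\,\bigl|2^{n+1}\cE_g(n)[G]\bigr|=2g\,\bigl|\cT_g(n)[G]\bigr|$, hence equicardinality within each class, and an arbitrary class-by-class bijection finishes the step. Your route is more elementary (no Hall's theorem) and suffices both for the statement and for all the enumerative applications in the paper, which use nothing beyond graph-preservation; what it gives up is that the paper's matching is a sub-relation of the recursive correspondence itself, whereas your pairing inside each class is unconstrained --- a distinction with no practical consequence, since Hall's extraction is equally non-constructive (as the paper concedes when discussing the fractional formulation). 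One point of care, common to both arguments but worth making explicit in yours: ``underlying graph'' must be read as an isomorphism class of vertex-rooted graphs, and for your arbitrarily chosen pairs the vertices-to-cycles correspondence is obtained by choosing a graph isomorphism; that choice is what lets you transport markings and keep the marked statement $2^{n+1}\cE_{g'}^{(2k+1)}(n)\simeq\cT_{g'}^{(2k+1)}(n)$ available at the next stage of the induction.
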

\begin{proof}
The proof is a simple induction on $g$, whereas $n$ is fixed. 
The case $g=0$ is obvious, as there are $2^{n+1}$ different  $C$-permutations of size
$(n+1)$ and genus $0$, corresponding to the $2^{n+1}$ ways of giving signs to the identity
permutation. Let $g>0$. The induction hypothesis ensures
that for each $g'<g$, $2^{n+1}\cE_{g'}^{(2k+1)}(n)\simeq \cT_{g'}^{(2k+1)}(n)$, where 
the underlying graphs (taking marked vertices vertices into account) of corresponding
objects are the same. Hence, by Propositions~\ref{prop:Eg_decomp} and~\ref{prop:Tg_decomp}, we have
$2g\ \!2^{n+1}\cE_g(n)\simeq 2g\ \!\cT_g(n)$, where the underlying graphs of corresponding
objects are the same. Finally, one can extract
from this $2g$-to-$2g$ correspondence a $1$-to-$1$ correspondence 
(think of extracting a perfect matching from a $2g$-regular
bipartite graph, which is possible according to Hall's marriage theorem).
And obviously the extracted $1$-to-$1$ correspondence, which realizes   
 $2^{n+1}\cE_g(n) \simeq \cT_g(n)$, also preserves the underlying graphs.    
\end{proof}

%GC: j'ai rajouté des longueurs dans cette section
% quoi qu'il en soit j'ai l'impression qu'il faudrait sérieusement la racourcir
% (on doit pouvoir être moins précis mais faire passer les idées).
\subsection{A fractional, or stochastic, formulation}
Even if this does not hinder enumerative applications to be detailed in the next section,   
we do not know of an effective (polynomial-time) way to implement the 
bijection of Theorem~\ref{theo:main}; indeed the last step of the proof
is to extract a perfect matching from a $2g$-regular bipartite graph
whose size is exponential in $n$. 

What can be done effectively 
%EFMay ajout parenthese cf demande referee
(in time complexity $O(gn)$) 
is a \emph{fractional} 
formulation of the bijection. 
For a finite set $X$, let $\mathbb{C}\langle X\rangle$ be the set
of linear combinations of the form $\sum_{x\in X}u_x\cdot x$, where the $x\in X$
are seen as independent formal vectors, and the coefficients $u_x$ are in $\mathbb{C}$.
Let $\mathbb{R}^+_{1}\langle X\rangle\subset\mathbb{C}\langle X\rangle$ 
be the subset of linear combinations where the coefficients
are nonnegative and add up to $1$. Denote by $\mathbf{1}_X$ the
vector $\sum_{x\in X}x$. For two finite sets $X$ and $Y$, 
a \emph{fractional mapping} 
from $X$ to $Y$ is a linear mapping $\phi$ from $\mathbb{C}\langle X\rangle$
to $\mathbb{C}\langle Y\rangle$ such that the image of each $x\in X$ is in
$\mathbb{R}^+_1\langle Y\rangle$;
%EF9 reecrit legerement
the set of elements of $Y$ whose coefficients in $\phi(x)$ are strictly positive is called
the \emph{image-support} of $x$. 
%EF ajout de cette phrase
Note that $\phi(x)$ identifies to a probability
distribution on $Y$; a ``call to $\phi(x)$'' is meant as picking up $y\in Y$
under this distribution. 
A fractional mapping is \emph{bijective} if $\mathbf{1}_X$ is mapped to
$\mathbf{1}_Y$, and is \emph{deterministic} if each $x\in X$ is mapped to
some $y\in Y$. 
% GC: enlevé ça pour gagner de la place.
%Note that a deterministic fractional mapping (resp. fractional
%bijection) is just the linear lift of a classical mapping (resp. classical
%bijection) from $X$ to $Y$. 
Note 
%also 
that, if there is a fractional bijection
from $X$  to $Y$, then $|X|=|Y|$ (indeed in that case the matrix of $\phi$
is bistochastic). 
%Given a fractional bijection $\phi$ and an element $ x \in X$, the formula
%$\phi(x)=\sum u_y y$ defines a probability measure $\mu_x$ on $Y$ (the measure that
%gives mass $u_y$ to $y$). Moreover, if one sample $x$ uniformly at random over
%$X$, and then chose $y$ according to $\mu_x$, then $y$ is uniformly distributed
%on $Y$. We say that a fractional mapping $\phi$ is \emph{linear-time effective} if one can
%compute the image of an element $x\in X$ in linear time (according to a chosen
%size parameter, which in our case will be $n$). In this case, what precedes gives a linear time
%random sampler that outputs  the uniform measure on the set $Y$ , assuming that one
%already is equipped with such a sampler for the set $X$.

%EF donne la complexite; c'est peut etre plus rapide, O(n) ?
One can now formulate by induction 
on the genus an effective (the cost of a call is $O(gn)$) fractional bijection
 from $2^{n+1}\cE_g[n]$ to $\cT_g(n)$,
 and similarly from $\cT_g[n]$ to $2^{n+1}\cE_g(n)$. 
%(by \emph{linear-time}, we mean
 %that the image $\phi(x)$ of any element $x\in X$ can be computed in linear-time
% w.r.t. $n$).
The crucial property is that, for $k\geq 1$ and $E$, $F$ finite sets,
 if there is a fractional bijection $\Phi$ from $kE$
to $kF$ then one can \emph{effectively} 
derive from it a fractional bijection $\widetilde{\Phi}$ from $E$ to $F$:
for $x \in E$, just define $\widetilde{\Phi}(x)$
as $\tfrac1{k}(\iota(\Phi(x_1))+\cdots+\iota(\Phi(x_k)))$,
where $x_1,\ldots,x_k$ are the representatives of $x$ in $kE$,
and where $\iota$ is the projection from $kF$ to $F$.  
%EF6: ajout de cette phrase, changement leger de la phrase precedente
In other words a call to $\widetilde{\Phi}(x)$ consists in picking up a representative
$x_i$ of $x$ in $kE$ uniformly at random and then calling $\Phi(x)$.  
Hence by induction on $g$, Propositions~\ref{prop:Eg_decomp} and~\ref{prop:Tg_decomp} (where the stated
combinatorial isomorphisms are effective) 
ensure that there is an effective fractional bijection
from  $2^{n+1}\cE_g(n)$ to $\cT_g[n]$ and similarly from $\cT_g[n]$
to $2^{n+1}\cE_g[n]$, such that if $\gamma'$ is in the image-support of $\gamma$
then the underlying graphs of $\gamma$ and $\gamma'$ are the same.

%GC: voilà pour l'aspect stochastique:
Note that, given an effective fractional bijection between two sets $X$ and
$Y$, and a uniform random sampling algorithm on the set $X$, one obtains
immediately a uniform random sampling algorithm for the set $Y$.
% Je ne détaille pas plus, avec un peu d'intuition probabiliste ça doit être clair
In the next section, we will use our bijection to prove several
enumerative formulas for unicellular maps, starting from elementary results on
the
enumeration of trees or permutations.
In all cases, we will also be granted with a
uniform random sampling algorithm for the corresponding unicellular maps, though we
will not emphasize this point in the rest of the paper.
%GC: c'est vrai ce que je viens d'écrire ?

\section{Counting formulas for unicellular maps}
\label{SectCounting}

%GC2: changé cette phrase, pour gagner 14 caractères et ainsi faire passer la
%proposition Harer-Zagier en bas de page!
%The big advantage of \cdec trees as a model to represent unicellular maps
%is that they are simple combinatorial objects. 
It is quite clear that \cdec trees are much simpler combinatorial objects than
unicellular maps. 
%\cdec trees give us a very simple combinatorial representation of
%unicellular maps.
In this section, we use them to
give bijective proofs of several known formulas concerning unicellular maps. We
focus on the Lehman-Walsh and the Goupil-Schaeffer formulas, and the Harer-Zagier
recurrence, of which bijective proofs were long-awaited.
%GC: j'ai osé ``longly awaited''. N'ayons honte de rien!
%VF: pas mal, mais word reference ne connait pas longly et propose long-awaited.
%Sinon, il y a aussi ``eagerly awaited'' mais ça fait peut-être beaucoup...
%GC2013: ajout de la modif de Valentin
%%We also sketch a bijective proof of the Harer-Zagier summation formula (different bijective proofs are already known for such summation formulas). 
We also give new bijective proofs of several summation formulas
(for which different bijective proofs are already known):
the Harer-Zagier summation formula and a refinement of it,
a formula due to Jackson for bipartite maps
and its refinement due to A.~Morales and E.~Vassilieva.
We finally consider an identity involving {\em covered maps},
obtained originally from a difficult bijection by the first author and O.~Bernardi
and which can be explained easily thanks to our new bijection.
%
%EF9 reecrit legerement
%(prototype for a family of formulas for which bijective proofs were already known). 
We
insist on the fact that all these proofs are elementary consequences of our
main bijection (Theorem~\ref{theo:main}).

\subsection{Two immediate corollaries}\label{SubsectImmediateCorol}
The set $\cT_g(n)=\cE_0(n)\times \cC_g(n+1)$ is the product of two sets
that are easy to count.
Precisely, let $\eps_g(n)=|\cE_g(n)|$ and $c_g(n)=|\cC_g(n)|$. 
Recall that $\eps_0(n)=\Cat(n)$, where $\Cat(n):=\frac{(2n)!}{n!(n+1)!}$ is 
the $n$-th Catalan number. 
Therefore Theorem~\ref{theo:main} gives
$
\eps_g(n)=2^{-n-1}\Cat(n)c_g(n+1).
$

%EF9 reecrit
%This immediately yields
%VF10: ai changé ça, ce n'est pas une conséquence de ce qu'il y a au dessus
One gets easily a closed form  for $c_g(n+1)$ (by summing over all
possible cycle types) and an explicit formula for the generating series,  
 thereby recovering two classical results for the enumeration of unicellular maps.

%VF10: rajouté un peu d'explication.
Every partition of $n+1$ in $n+1-2g$ odd parts writes as
$1^{n+1-2g-\ell}3^{m_1}\ldots (2k+1)^{m_k}$
for some partition
$\gamma=(\gamma_1,\ldots,\gamma_{\ell})=1^{m_1}\ldots k^{m_k}$
of $g$.
The number $a_{\gamma}(n+1)$ of 
permutations of \hbox{$n+1$} elements with 
cycle-type equal to $1^{n+1-2g-\ell}3^{m_1}\ldots (2k+1)^{m_k}$
is classically given by 
%EF6 ai mis cette equation en display
$$
a_{\gamma}(n+1)=\frac{(n+1)!}{(n+1-2g-\ell)!\prod_im_i!(2i+1)^{m_i}}, 
$$
and the number of C-permutations with this cycle-type is just 
$a_{\gamma}(n+1)2^{n+1-2g}$ (since each cycle has 
% GC2 changed %weight $2$
$2$ possible signs).
Hence, we get the equality
%VF10: mis des $$
$$
c_g(n+1) = 2^{n+1-2g} \sum_{\gamma \vdash g}a_{\gamma}(n+1).
$$
We thus recover:
\begin{prop}[Walsh and Lehman~\cite{LeWa72}]
The number $\eps_g(n)$ is given by
$$
\eps_g(n)=\frac{(2n)!}{n!(n+1-2g)!2^{2g}}\sum_{\gamma \vdash g}\frac{(n+1-2g)_{\ell}}{\prod_im_i!(2i+1)^{m_i}},
$$
% GC2: au mis une formule close pour le produit, pour faire passer la prop
% Harer-Zagier en bas de page
%where $(x)_k=x(x-1)\cdots(x-k+1)$, 
where $(x)_k=\prod_{j=0}^{k-1}(x-j)$, 
$\ell$ is the number of parts of $\gamma$,
and $m_i$ is the number of parts of length $i$ in $\gamma$. 
\end{prop}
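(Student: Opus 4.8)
The plan is to combine the two ingredients already assembled in the preceding discussion and simplify. The starting point is the relation $\eps_g(n)=2^{-n-1}\Cat(n)c_g(n+1)$ furnished by Theorem~\ref{theo:main}, together with the closed form $c_g(n+1)=2^{n+1-2g}\sum_{\gamma\vdash g}a_{\gamma}(n+1)$ and the explicit value of $a_{\gamma}(n+1)$ recalled just above. First I would substitute all of these into one another, using $\Cat(n)=\frac{(2n)!}{n!(n+1)!}$, to obtain
$$
\eps_g(n)=2^{-n-1}\frac{(2n)!}{n!(n+1)!}\,2^{n+1-2g}\sum_{\gamma\vdash g}\frac{(n+1)!}{(n+1-2g-\ell)!\prod_i m_i!(2i+1)^{m_i}}.
$$

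Next I would carry out the obvious cancellations. The prefactors $2^{-n-1}$ and $2^{n+1-2g}$ combine into $2^{-2g}$, and the $(n+1)!$ in the denominator of the Catalan number cancels the $(n+1)!$ in the numerator of $a_{\gamma}(n+1)$, leaving
$$
\eps_g(n)=\frac{(2n)!}{n!\,2^{2g}}\sum_{\gamma\vdash g}\frac{1}{(n+1-2g-\ell)!\prod_i m_i!(2i+1)^{m_i}}.
$$

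The only genuine manipulation is then to match the bare factorial $(n+1-2g-\ell)!$ against the falling factorial $(n+1-2g)_{\ell}$ that appears in the target formula. Using $(x)_k=\prod_{j=0}^{k-1}(x-j)$, one has $\frac{(n+1-2g)!}{(n+1-2g-\ell)!}=(n+1-2g)_{\ell}$, hence $\frac{1}{(n+1-2g-\ell)!}=\frac{(n+1-2g)_{\ell}}{(n+1-2g)!}$; pulling the constant factor $1/(n+1-2g)!$ outside the sum yields precisely the claimed expression. I expect no real obstacle in this argument, since it is pure bookkeeping: the one point to watch is keeping the index of the falling factorial correctly aligned, so that $(n+1-2g)_{\ell}$ indeed starts at $n+1-2g$ and runs over exactly $\ell$ descending terms, with $\ell$ being the number of parts of the partition $\gamma$ of $g$.
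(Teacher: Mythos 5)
Your proposal is correct and follows exactly the paper's own route: the paper derives the formula by combining $\eps_g(n)=2^{-n-1}\Cat(n)c_g(n+1)$ (from Theorem~\ref{theo:main}) with $c_g(n+1)=2^{n+1-2g}\sum_{\gamma\vdash g}a_\gamma(n+1)$ and the explicit value of $a_\gamma(n+1)$, leaving the remaining cancellations implicit. You simply spell out that bookkeeping, including the correct conversion $\frac{1}{(n+1-2g-\ell)!}=\frac{(n+1-2g)_\ell}{(n+1-2g)!}$, so there is nothing to add.
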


Define the exponential generating function 
%VF10: put that on one line, it was ugly
$$C(x,y):=\sum_{n,g}\tfrac1{(n+1)!}c_g(n+1)y^{n+1}x^{n+1-2g}$$
of % signed cycles of odd length. %EF6 plutot: of C-permutations ?
C-permutations where $y$ marks the number of elements,
which are labelled, and $x$
marks the number of cycles.
%VF10: coupé la phrase et ajouté une référence au Flajolet,
%tout le monde n'est pas tombé dedans quand il était petit!!
%EFMay detaille un peu plus cf remarque referee
Since a  C-permutation is a set of signed cycles of odd lengths, $C(x,y)$ is given by 
%EFMay ai remplace k\geq 1 par k\geq 0
$$
C(x,y)=\exp\Big(2x\sum_{k\geq 0}\frac{y^{2k+1}}{2k+1}\Big)-1.
$$
Indeed the sum in the parenthesis is the generating function of cycles of odd lengths, the factor $2$ is for the signs of cycles, the $\exp$ means that we take a set
of such signed cycles, and the $-1$ means that the set is non-empty 
(see {\it e.g.} \cite[Part A]{FlajoletSedgewickAnalyticCombinatorics} for a
general presentation of the methodology to translate classical combinatorial set operations into generating function expressions, in particular page 120 for the application
to permutations seen as sets of cycles). Since $\sum_{k\geq 0}\frac{y^{2k+1}}{2k+1}=\frac1{2}\log\Big(\frac{1+y}{1-y}\Big)$, the expression simplifies to
% GC2 removed % given by
$$
C(x,y)=\exp\Big(x\log\Big(\frac{1+y}{1-y}\Big)\Big)-1=\Big(\frac{1+y}{1-y}\Big)^x-1.
$$
Since $c_0(1)=2$ and $\frac{1}{(n+1)!}c_g(n+1)=\frac{2^{n+1}n!}{(2n)!}\eps_g(n)=\frac{2}{(2n-1)!!}\eps_g(n)$ 
for $n\geq 1$, we recover: 

\begin{prop}[Harer-Zagier series formula~\cite{HaZa86,ZvLa97}]
The generating function 

$\displaystyle E(x,y):=1+2xy+2\sum_{g\geq 0,n>0}\frac{\eps_g(n)}{(2n-1)!!}y^{n+1}x^{n+1-2g}$ is given by
$$
 E(x,y)=\Big(\frac{1+y}{1-y}\Big)^x.
$$
\end{prop}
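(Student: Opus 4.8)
The plan is to derive the Harer-Zagier series formula as an almost immediate consequence of the explicit product formula for $C(x,y)$ established just above. The key observation is that the two generating functions $C(x,y)$ and $E(x,y)$ are essentially the same series, up to the normalization conventions, so the entire task reduces to matching coefficients term by term. First I would recall that we already have the closed form
$$
C(x,y)=\Big(\tfrac{1+y}{1-y}\Big)^x-1,
$$
and that by Theorem~\ref{theo:main} the quantities $\eps_g(n)$ and $c_g(n+1)$ are linked by $\eps_g(n)=2^{-n-1}\Cat(n)c_g(n+1)$.

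Next I would compare the defining series coefficient-wise. The coefficient of $y^{n+1}x^{n+1-2g}$ in $C(x,y)$ is $\tfrac1{(n+1)!}c_g(n+1)$ by definition. Using the stated identity $\tfrac1{(n+1)!}c_g(n+1)=\tfrac{2}{(2n-1)!!}\eps_g(n)$ for $n\geq 1$, this coefficient equals $\tfrac{2\eps_g(n)}{(2n-1)!!}$, which is exactly the coefficient appearing in the sum defining $E(x,y)$. Thus $C(x,y)$ and $E(x,y)-1$ agree on all monomials $y^{n+1}x^{n+1-2g}$ with $n\geq 1$; I would then check separately the low-order terms. The term $n=0$ contributes the $c_0(1)=2$ piece, giving coefficient $\tfrac{1}{1!}\cdot 2 = 2$ for the monomial $yx$ in $C$, which matches the explicit $2xy$ term singled out in the definition of $E$.

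Adding back the constant, one gets $E(x,y)=C(x,y)+1=\big(\tfrac{1+y}{1-y}\big)^x$, which is the claimed formula. The only genuine bookkeeping is the treatment of boundary terms: the $n=0$ case (the $2xy$ term) and the overall constant $+1$ absorbing the $-1$ in the formula for $C(x,y)$. I expect the main (mild) obstacle to be making sure these low-degree contributions are accounted for correctly, since the relation $\tfrac1{(n+1)!}c_g(n+1)=\tfrac{2}{(2n-1)!!}\eps_g(n)$ holds only for $n\geq 1$ and the $n=0$ term must be handled by hand; once those are reconciled, the identity of the two series is immediate and the proposition follows with essentially no further computation.
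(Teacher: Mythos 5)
Your proposal is correct and follows essentially the same route as the paper: the paper derives the closed form $C(x,y)=\big(\tfrac{1+y}{1-y}\big)^x-1$ via the symbolic method and then obtains the Harer-Zagier series formula precisely by the coefficient translation $\tfrac1{(n+1)!}c_g(n+1)=\tfrac{2}{(2n-1)!!}\eps_g(n)$ for $n\geq 1$, together with the boundary value $c_0(1)=2$. Your careful handling of the $n=0$ term and the constant $+1$ is exactly the bookkeeping the paper performs implicitly.
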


\subsection{Harer-Zagier recurrence formula}
Elementary algebraic manipulations on the expression of $E(x,y)$ yield
a very simple recurrence satisfied by $\eps_g(n)$, 
known as the Harer-Zagier recurrence formula 
(stated in Proposition~\ref{prop:hazaA1} hereafter).  
We now show that 
the model of \cdec trees makes it possible to derive this recurrence
directly from a combinatorial isomorphism, 
%EFMay changé pour rendre compte du fait que Remy opere sur les arbres binaires
%that generalizes R\'emy's beautiful
%bijection for plane trees~\cite{Remy}.
that generalizes R\'emy's beautiful 
bijection~\cite{Remy} formulated on plane trees.

It is convenient here
to consider \cdec trees as \emph{unlabelled structures}: precisely
we see a \cdec tree as a plane tree where the vertices are
partitioned into parts of odd size, where each part carries a sign $+$ or $-$, 
and such that the vertices in each part
are cyclically ordered (the C-permutation can be recovered by numbering
the vertices of the tree according to a left-to-right depth-first traversal), 
think of Figure~\ref{fig:C-tree}(c) where the labels have been taken out. 
%EF6: explique que on doit prendre 2n+1 coins
We take here the convention that a plane tree with 
$n$ edges has $2n+1$ corners, considering that the sector of the root has two
corners, one on each side of the root.   

We denote by $\cP(n)=\cE_0(n)$ the set of plane trees with $n$ edges,
and by $\cPv(n)$ (resp. $\cPc(n)$) the set of plane trees with $n$ edges
where a vertex (resp. a corner) is marked. R\'emy's procedure,
shown in Figure~\ref{fig:Remy}, realizes the isomorphism $\cPv(n)\simeq 2\ \!\cPc(n-1)$, 
or equivalently 
\begin{equation}\label{eq:remyP}
(n+1)\cP(n)\simeq 2(2n-1)\cP(n-1).
\end{equation}

%GC: belle figure, mais à mon avis il y a de la marge pour la réduire 
% (soit gagner 2cm en hauteur, soit juste gagner en largeur et pousser la
% légende sur le côté)... enfin nous verrons!
\begin{figure}
\begin{center}
%\begin{minipage}{10cm}
\includegraphics[width=11cm]{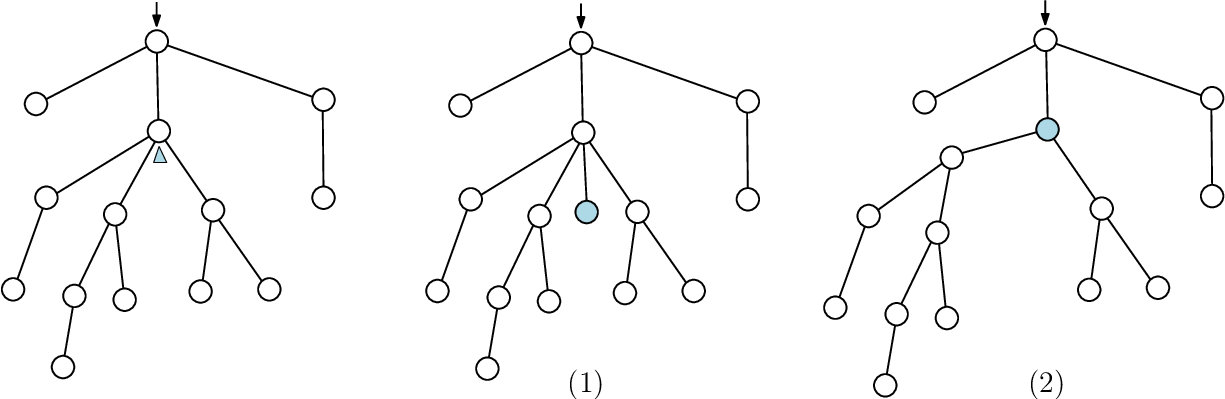}
%\end{minipage}
%\hfill
%\begin{minipage}{4cm}
\caption{R\'emy's procedure gives two ways to obtain a plane tree with $n$ edges and a marked vertex $v$ 
%EF6: ai ajoute des precisions entre parentheses
from a plane tree with $n-1$ edges and a marked corner: (1) in the first way 
(replacing  the marked corner by a leg) 
$v$ is a leaf, (2) in the second way (stretching an edge to carry the subtree on the left of the marked corner) $v$ is a non-leaf.}
\label{fig:Remy}
%\end{minipage}
\end{center}
\end{figure}
%VF5: remise en page
Let $\cTgv(n)$ be the set of \cdec trees from $\cT_g(n)$ where a vertex is marked. 
Let $\cA$ (resp. $\cB$) be the subset of objects in $\cTgv(n)$ where the signed cycle containing
the marked vertex has length $1$ (resp. length greater than $1$).
Let $\gamma\in\cTgv(n)$, with $n\geq 1$.  
If $\gamma\in\cA$, record the sign of the $1$-cycle containing $v$ and then
apply R\'emy's procedure to the plane tree with respect to $v$ (so as to delete $v$).
This reduction, which does not change the genus, yields 
%GC2: j'ai dû mettre des éq dans le texte pour gagner 1cm sur cette page... 
% si on gagne encore 2cm dans ce paragraphe, alors  par rebonds successifs
% on gagne 10 cm à la fin de l'article et on tient dans les 12 pages !
$\displaystyle\cA\simeq 2\cdot 2(2n-1)\cT_g(n-1)$.
If $\gamma\in\cB$, let $c$ be the cycle containing the marked vertex $v$;
$c$ is of the form $(v,v_1,v_2,\ldots,v_{2k})$ for some $k\geq 1$. Move
$v_1$ and $v_2$ out of $c$ (the successor of $v$ becomes the former successor
of $v_2$). Then apply R\'emy's procedure twice, firstly with respect to $v_1$
(on a plane tree with $n$ edges), secondly with respect to $v_2$ (on a plane tree
with $n-1$ edges). This reduction, which decreases the genus by $1$, yields
$\cB\simeq 2(2n-1)2(2n-3)\cT_{g-1}^{\mathrm{v}}(n-2)$, hence
$
\cB\simeq 4(n-1)(2n-1)(2n-3)\cT_{g-1}(n-2)$. 
Since $\cTgv(n)=\cA+\cB$ and $\cTgv(n)\simeq(n+1)\cT_g(n)$, we finally obtain the isomorphism
\begin{equation}\label{eq:recT}
(n+1)\cT_g(n)\simeq 4(2n-1)\cT_g(n-1)+4(n-1)(2n-1)(2n-3)\cT_{g-1}(n-2),
\end{equation}
which holds for any $n\geq 1$ and 
$g\geq 0$ (with the convention $\cT_g(n)=\emptyset$ if $g$
or $n$ is negative).  Since $2^{n+1}\cE_g(n)\simeq\cT_g(n)$, we recover:
\begin{prop}[Harer-Zagier recurrence formula~\cite{HaZa86,ZvLa97}]
The coefficients $\eps_g(n)$ satisfy the following recurrence relation valid 
for any $g\geq 0$ and $n\geq 1$ 
(with $\eps_0(0)=1$ and $\eps_g(n)=0$ if $g<0$ or $n<0$):
$$
(n+1)\eps_g(n)=2(2n-1)\eps_g(n-1)+(n-1)(2n-1)(2n-3)\eps_{g-1}(n-2).
$$
\end{prop}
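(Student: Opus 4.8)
The plan is to deduce the Harer-Zagier recurrence purely as a numerical shadow of the combinatorial isomorphism \eqref{eq:recT} that the paper has just established for $\cC$-decorated trees. Since Theorem~\ref{theo:main} gives $2^{n+1}\cE_g(n)\simeq\cT_g(n)$, taking cardinalities yields $|\cT_g(n)|=2^{n+1}\eps_g(n)$. So the entire task reduces to passing \eqref{eq:recT} to cardinalities and then clearing the common powers of $2$.

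First I would apply $|\cdot|$ to both sides of \eqref{eq:recT}, using that $|k\,\cA|=k|\cA|$ and $|\cA+\cB|=|\cA|+|\cB|$ for the disjoint-union and copy notation introduced earlier. Writing $t_g(n):=|\cT_g(n)|=2^{n+1}\eps_g(n)$, this gives
$$
(n+1)\,t_g(n)=4(2n-1)\,t_g(n-1)+4(n-1)(2n-1)(2n-3)\,t_{g-1}(n-2).
$$
Next I would substitute $t_g(n)=2^{n+1}\eps_g(n)$, $t_g(n-1)=2^{n}\eps_g(n-1)$, and $t_{g-1}(n-2)=2^{n-1}\eps_{g-1}(n-2)$ into this relation. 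Every term then carries an explicit power of $2$: the left side has $2^{n+1}$, the first right-hand term has $4\cdot 2^{n}=2^{n+2}$, and the second has $4\cdot 2^{n-1}=2^{n+1}$. Dividing through by $2^{n+1}$ collapses the first coefficient $4$ to $2$ and the second coefficient $4$ to $2$, producing exactly
$$
(n+1)\,\eps_g(n)=2(2n-1)\,\eps_g(n-1)+(n-1)(2n-1)(2n-3)\,\eps_{g-1}(n-2),
$$
which is the claimed formula.

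Finally I would verify that the boundary conventions match. The isomorphism \eqref{eq:recT} is stated to hold for all $n\geq 1$ and $g\geq 0$ under the convention $\cT_g(n)=\emptyset$ whenever $g$ or $n$ is negative, so the corresponding recurrence for $\eps_g(n)$ inherits the stated conventions $\eps_g(n)=0$ for $g<0$ or $n<0$; and $\eps_0(0)=1$ is immediate since a plane tree with $0$ edges is a single vertex. Honestly, there is no real obstacle here: the combinatorial content is entirely carried by the bijections behind \eqref{eq:recT} (the R\'emy-type surgery on $\cC$-decorated trees), and the remaining step is only the bookkeeping of powers of $2$. The one point worth stating carefully is that the factor $2^{n+1}$ on both sides cancels cleanly precisely because the two right-hand terms shift $n$ by $1$ and $2$ respectively, so that after multiplying by the explicit prefactors $4$ the powers of $2$ align to a common $2^{n+1}$; this is exactly what makes the $\eps_g$ recurrence look like a ``halved'' version of the $\cT_g$ recurrence.
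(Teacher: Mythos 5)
Your proposal is correct and follows essentially the same route as the paper, whose own proof of this proposition is precisely to establish the isomorphism \eqref{eq:recT} by the R\'emy-type surgery and then pass to cardinalities via Theorem~\ref{theo:main} (``Since $2^{n+1}\cE_g(n)\simeq\cT_g(n)$, we recover\dots''). One minor prose slip: dividing by $2^{n+1}$ collapses the second coefficient $4$ to $1$, not to $2$ (since $4\cdot 2^{n-1}=2^{n+1}$), which is what your displayed arithmetic and final formula in fact reflect.
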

To the best of our knowledge this is the first proof of the Harer-Zagier recurrence
formula that directly follows from a combinatorial isomorphism. 
The isomorphism~\eqref{eq:recT} also provides
a natural extension to arbitrary genus of R\'emy's
isomorphism~\eqref{eq:remyP}.\\

\subsection{Refined enumeration of bipartite unicellular maps}
In this paragraph, we explain how to recover a formula due to
A. Goupil and G. Schaeffer~\cite[Theorem 2.1]{GoSc00}
%GC2 added
from our bijection.
Let us first give a few definitions.
A graph is \emph{bipartite} if its vertices can be
% GC2 changed %partitioned into black and white vertices 
colored in black and white
such that
each edge connects a black and a white vertex. 
If the graph has a root-vertex $v$, then $v$ is required to be black;
thus, if the graph is also connected, then such a bicoloration of the vertices
is unique.
From now on, a connected 
bipartite graph with a root-vertex is assumed to be endowed
with this canonical bicoloration. 

The degree distribution of a map/graph is
the sequence of the degrees of its vertices taken in decreasing order 
(it is a partition of $2n$, where $n$ is the number of edges).
If we consider a bipartite map/graph, we can consider separately
the {\em white vertex degree distribution} and 
the {\em black vertex degree distribution}, which
 are two partitions of $n$.%EF enleve with obvious definitions.

%EF met definition du genre plus tot
Let $\ell,m,n$ be positive integers such that $n+1-\ell-m$ is even.
 Fix two partitions $\lambda$, $\mu$ of $n$ of respective lengths $\ell$ and $m$. 
We call $\Bi(\lambda,\mu)$ the number of bipartite unicellular maps, 
with white (resp. black) vertex degree distribution $\lambda$ (resp. $\mu$).
The corresponding genus is $g=(n+1-\ell-m)/2$. 

The purpose of this paragraph is to compute $\Bi(\lambda,\mu)$.
It will be convenient to change a little bit the formulation of the problem
and to consider \emph{labelled maps} instead of the usual non-labelled maps:
 a \emph{labelled map}
% GC2: je n'aime pas beaucoup ``Cayley'', qui pour moi est synonyme de
% ``étiqueté mais non plongé''. Je préfère tout simplement ``labelled map''
% (et les trois caractères en plus sont compensés par l'absence de footnote)
%\footnote{This denomination has been chosen as an analogy with Cayley trees.
%However, we recall that all maps and trees considered here are rooted, unlike the usual Cayley trees.}
is a map whose vertices are labelled with integers $1,2,\cdots$.
If the map is bipartite, we require instead that
the white and black vertices are labelled
separately (with respective labels $w_1,w_2,\cdots$ and $b_1,b_2,\cdots$).
The degree distribution(s) of a labelled map (resp. bipartite labelled
 map) with $n$ edges can be seen
as a composition of $2n$ (resp. two compositions of $n$).
For $\II=(i_1,\cdots,i_{\ell})$ and $\JJ=(j_1,\cdots,j_m)$ two compositions of $n$, 
we denote by $\BiL(\II,\JJ)$ the number of labelled bipartite unicellular maps
with white (resp. black) vertex degree distribution $\II$ (resp. $\JJ$).
The link between $\Bi(\lambda,\mu)$ and $\BiL(\II,\JJ)$
is straightforward:
$ \BiL(\II,\JJ) = m_1(\lambda)! m_2(\lambda)! \cdots m_1(\mu)! m_2(\mu)! \cdots
\Bi(\lambda,\mu),$
where $\lambda$ and $\mu$ are the sorted versions of $\II$ and $\JJ$. We now
recover the following formula:

% EF enleve In the rest of this section, we give the first combinatorial proof of the following formula:
% GC2: chez moi ça ne compile pas si je mets le cite en argument de la proposition
% alors j'ai créé une commande.
\newcommand{\citationProblematique}{\cite[Theorem 2.1]{GoSc00}}
\begin{prop}[Goupil and Schaeffer \citationProblematique]\label{prop:GS}
%EF enleve les facteurs 2^{n+1} de chaque cote, corrige 2^{-g} en 2^{-2g}
\begin{multline}
     \BiL(\II,\JJ) 
    = 2^{-2g} \cdot n 
%GC8 : j'ai mis tous les facteurs qui dépendaient de g_1, g_2 
% à l'intérieur de la somme...
% :-)
\cdot \sum_{g_1+g_2=g} 
 (\ell + 2g_1 -1)! (m + 2g_2 -1)! \\
 \sum_{p_1+\cdots+p_\ell=g_1 \atop q_1+\cdots+q_m=g_2}
\prod_{r=1}^\ell \frac{1}{2p_r+1} \binom{i_r -1}{2 p_r} 
\prod_{r=1}^m \frac{1}{2q_r+1} \binom{j_r -1}{2 q_r}. \label{eq:GS}
\end{multline}

\end{prop}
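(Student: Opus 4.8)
The plan is to read the formula off the main bijection (Theorem~\ref{theo:main}) by enumerating the \cdec trees whose underlying graph is bipartite. First I would translate bipartiteness into the language of \cdec trees. A plane tree $T$ carries a canonical proper $2$-coloring, coloring a vertex black or white according to the parity of its distance to the (black) root; adjacent vertices then have different colors, so merging the vertices of a cycle of $\sigma$ into a single vertex keeps the coloring proper precisely when that cycle is \emph{monochromatic}, and one checks that the underlying graph is bipartite if and only if every cycle of $\sigma$ is monochromatic. Under this correspondence the white (resp. black) vertices of the map are the white (resp. black) cycles of $\sigma$, and the degree of a map-vertex is the sum of the $T$-degrees of the vertices in its cycle (no loop arises, since two vertices of the same color are never adjacent in $T$). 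Applying Theorem~\ref{theo:main} to labelled objects, $2^{n+1}\BiL(\II,\JJ)$ equals the number of labelled bipartite \cdec trees with the prescribed degree data; as each of the $\ell+m=n+1-2g$ cycles carries an independent sign, the $2^{\ell+m}=2^{n+1-2g}$ sign choices factor out and produce exactly the prefactor $2^{-2g}$. It then remains to count the underlying unsigned objects $U$ and to show $U=n\sum_{g_1+g_2=g}\cdots$.

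The combinatorial heart is an enumeration of bipartite plane trees. Writing $W$ and $B$ for the numbers of white and black \emph{tree}-vertices, so that a white cycle of length $2p_r+1$ contributes $2p_r+1$ white tree-vertices and $W=\ell+2g_1$, $B=m+2g_2$ with $g_1=\sum_r p_r$, $g_2=\sum_s q_s$, $g_1+g_2=g$, I would prove the following clean fact: the number of plane bipartite trees whose white vertices are labelled $1,\dots,W$ and black vertices labelled $1,\dots,B$, with prescribed positive degrees, is $(W-1)!\,(B-1)!$, \emph{independently of the chosen degrees}. This follows by combining the bipartite (Scoins-type) count of labelled abstract trees with given degrees, namely $\tfrac{(W-1)!\,(B-1)!}{\prod_i(\alpha_i-1)!\,\prod_j(\beta_j-1)!}$, with the fact that an abstract tree admits $\prod_v(d_v-1)!$ plane embeddings (one cyclic ordering of the edges at each vertex): the degree-dependent factorials cancel. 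Since a bipartite map is rooted at a corner incident to its black root-vertex, the relevant plane trees are rooted at one of the $n$ black corners, which multiplies the count by $n$ and supplies the global factor $n$.

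With this lemma the two colors decouple. I would pass to fully labelled objects by adjoining free bijections from the white (resp. black) tree-vertices to $\{1,\dots,W\}$ (resp. $\{1,\dots,B\}$), which multiplies $U$ by $W!\,B!$; because the tree count is degree-independent, I may then sum over all degree assignments. For the white side this produces, for fixed cycle lengths, the $\prod_r\binom{i_r-1}{2p_r}$ compositions of $i_r$ into $2p_r+1$ positive parts, while distributing the $W$ labels into labelled cyclically-ordered parts of sizes $2p_r+1$ gives $\tfrac{W!}{\prod_r(2p_r+1)}$; dividing out the $W!$ introduced by the free labels leaves exactly $\prod_r\frac{1}{2p_r+1}\binom{i_r-1}{2p_r}$, and similarly for the black side. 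Collecting the tree factor $n\,(W-1)!\,(B-1)!=n\,(\ell+2g_1-1)!\,(m+2g_2-1)!$ and summing over $g_1+g_2=g$ reproduces~\eqref{eq:GS}, after restoring the prefactor $2^{-2g}$.

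The step I expect to be the most delicate is the bookkeeping of symmetry and rooting factors. The cyclic symmetry within each cycle is what turns the $\binom{i_r-1}{2p_r}$ linear compositions into the $\tfrac{1}{2p_r+1}\binom{i_r-1}{2p_r}$ of the statement, and it is essential that this symmetry be accounted for exactly once; the cleanest way to guarantee this is the labelled-then-divide argument above, where $\tfrac{1}{2p_r+1}$ emerges as $\tfrac{W!/\prod_r(2p_r+1)}{W!}$. Equally delicate is the factor $n$ rather than $2n$: it relies on the convention that the root-corner of a bipartite map sits at the black root-vertex, so that only the $n$ black corners are admissible root positions. Getting either of these constants wrong would spoil the formula, so these are the points I would check most carefully, for instance against the genus-$0$ case where the count must reduce to the number of labelled bipartite plane trees.
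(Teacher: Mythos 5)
Your proof is correct, and its skeleton is the same as the paper's: transfer the count through Theorem~\ref{theo:main} to bipartite \cdec trees, observe that cycles must be monochromatic so that the two colors decouple, pull the sign choices out as a factor $2^{n+1-2g}$ (whence the prefactor $2^{-2g}$), and produce each factor $\frac{1}{2p_r+1}\binom{i_r-1}{2p_r}$ from a cyclic structure together with a degree refinement, the whole computation resting on the crucial fact that the labelled plane-tree count $n\,(W-1)!\,(B-1)!$ (in your notation $W=\ell+2g_1$, $B=m+2g_2$) is independent of the prescribed degrees. The differences are local but real. Where the paper takes as input the genus-zero formula \eqref{eq:g0} (a bivariate cyclic lemma, with a pointer to \cite[Theorem 2.2]{GJ92}) and applies it to the refined degree distributions $\HH$ and $\KK$, you re-derive that count from scratch by combining the bipartite Cayley--Pr\"ufer (Scoins-type) formula for abstract labelled trees with the rotation-system factor $\prod_v(d_v-1)!$; this makes the argument self-contained and renders the degree-independence transparent (the factorials cancel), and it is legitimate because every rotation system on a tree has genus $0$ and vertex-labelled plane trees have no nontrivial automorphisms, so rooting at the $n$ black corners is a clean $n$-to-$1$ operation. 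Your bookkeeping also runs in the direction opposite to the paper's: the paper starts from labelled trees, builds the canonical C-permutation $w_r^t\mapsto w_r^{t+1}$, and divides by the recovery multiplicity $2^{n+1-2g}\bigl(\prod_{r=1}^\ell(2p_r+1)\prod_{r=1}^m(2q_r+1)\bigr)^{-1}$, whereas you start from the unsigned C-decorated objects, adjoin free vertex labels (factor $W!\,B!$), and divide them back out; these are equivalent double counts. Finally, you actually prove the characterization that the underlying graph is bipartite if and only if every cycle is monochromatic, which the paper uses implicitly (it is asserted without proof at the start of the Jackson-formula subsection); your argument via uniqueness of the proper $2$-coloring of the tree is the right one, and is a worthwhile addition.
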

\begin{proof}
%The fact (used in paragraph \ref{SubsectImmediateCorol})
 %that the number of (bipartite) plane trees with $n$ edges is $\Cat_{n+1}$
%(all trees are bipartite) has been refined by Goulden and Jackson
%(\cite[Theorem 2.2]{GJ92},
% GC2: pour gagner une référence j'ai tenté la parenthèse suivante
% es-tu d'accord, Éric
%(see also \cite{Ba11} for a bijective proof of an equivalent statement):
%which can also be proved combinatorially using the cycle lemma):
%let $\lambda$ and $\mu$ be two partitions of the same integer $n$ with 
%$\ell(\lambda)+\ell(\mu)=n+1$,
%the number of bipartite plane trees 
%with white (resp. black) vertex degree distribution $\lambda$ (resp. $\mu$) is
%\[Bi(\lambda,\mu) = n \cdot \frac{(\ell(\lambda)-1)!(\ell(\mu)-1)!}{
%m_1(\lambda)! m_2(\lambda)! \cdots m_1(\mu)! m_2(\mu)! \cdots}. \]
%The statement in terms of labelled (rooted) plane tree is even nicer
%if $\II=(i_1,\cdots,i_\ell)$ and $\JJ=(j_1,\cdots,j_m)$ are two compositions of the same integer $n$,
%with $\ell+m=n+1$, then
%EF raccourci le debut de la preuve
For $g=0$ the formula is simply
\begin{equation}\label{eq:g0}
   \BiL(\II,\JJ) = n (\ell-1)! (m-1)!,
\end{equation}
which can easily be established by a bivariate version of the cycle lemma,
see also~\cite[Theorem 2.2]{GJ92}.  
(Note that, in that case, the cardinality only depends on the lengths of $\II$ and $\JJ$.) 

%From now on, we fix two compositions $\II$ and $\JJ$ of size $n$ and respective length
%$\ell$ and $m$.
%Denote $g=(n+1-\ell+m)/2$, we suppose that it is an integer.
We now prove the formula for arbitrary $g$. 
Consider some lists $\pp=(p_1,\cdots,p_\ell)$ and $\qq=(q_1,\cdots,q_m)$ of nonnegative
integers 
with total sum $g$: let $g_1=\sum p_i$ and $g_2= \sum q_i$.
We say that a composition $\HH$ \emph{refines} $\II$ along $\pp$ if $\HH$ is of the form 
$(h_1^1,\cdots,h_1^{2p_1+1},\cdots,h_\ell^1,\cdots,h_\ell^{2p_\ell+1}),$
with $\sum_{t=1}^{2p_r+1} h_r^t = i_r$ for all $r$ between $1$ and $\ell$.
Clearly, there are $\prod_{r=1}^\ell \binom{i_r -1}{2 p_r}$ such compositions $\HH$.
One defines similarly a composition $\KK$ refining $\JJ$ along $\qq$.  

Consider now the set of labelled bipartite plane trees of vertex degree distributions $\HH$ and
$\KK$, where $\HH$ (resp. $\KK$) refines $\II$ (resp. $\JJ$) along $\pp$ (resp. $\qq)$.
By~\eqref{eq:g0}, there are
$n \cdot (\ell + 2g_1 -1)! (m + 2g_2 -1)! $
% GC2 removed %such 
trees for each pair $(\HH,\KK)$, so
%GC2: inversé texte pour gagner une ligne
in total, with $\II$, $\JJ$, $\pp$ and $\qq$ fixed, the number of such trees
is:
\begin{equation}\label{EqNbTrees}
    n \cdot (\ell + 2g_1 -1)! (m + 2g_2 -1)! 
\prod_{r=1}^\ell \binom{i_r -1}{2 p_r} \prod_{r=1}^m \binom{j_r -1}{2 q_r}.
\end{equation}

As the parts of $\HH$ (resp. $\KK$) are naturally indexed by pairs of integers,
we can see these trees as labelled by the set 
% GC2: très moche, j'ai mis dans le texte, mais à ce stade il faut vraiment
% gratter partout
%\[ \{w_r^t; 1 \leq r \leq \ell, 1 \leq t \leq 2 p_r+1\}
%\sqcup \{b_r^t; 1 \leq r \leq m, 1 \leq t \leq 2 q_r+1\}. \]
$\displaystyle \{w_r^t; 1 \leq r \leq \ell, 1 \leq t \leq 2 p_r+1\}
\sqcup \{b_r^t; 1 \leq r \leq m, 1 \leq t \leq 2 q_r+1\}. 
$
There is a canonical permutation of the vertices of the trees
with cycles of odd sizes and which preserves the bicoloration:
just send $w_r^t$ to $w_r^{t+1}$ (resp. $b_r^t$ to $b_r^{t+1}$),
where $t+1$ is meant modulo $2 p_r+1$ (resp. $2 q_r +1$).
If we additionally put a sign on each cycle, we get a \cdec tree (with labelled cycles)
that corresponds to a labelled bipartite map with white (resp. black) 
vertex degree distribution $\II$ (resp. $\JJ$). 
Conversely, to recover a labelled bipartite plane tree from such a \cdec tree, one has
to 
%The underlying graph associated to such a tree with the cycle structure above
%is a labelled bipartite graph with vertex degree distribution $\II$ and $\JJ$.
%Therefore, if we choose any signs for the cycles and apply the bijection
%of section \ref{SectBij}, we get a labelled bipartite map
%with vertex degree distribution $\II$ and $\JJ$.
%Conversely, a labelled bipartite map is sent by this bijection
%over a \cdec bipartite tree, whose cycles are labelled by $w_1,\cdots,w_\ell$
%and $b_1,\cdots,b_m$.
%To get a labelled bipartite plane tree from that structure, one has to
% GC2: idem, removed itemize.
%\begin{itemize}
%    \item forget the signs (hence a factor $2^{n+1 - g}$, when we count maps in terms of numbers of trees);
%    \item choose in each cycle which vertex gets the label $w_r^1$ (or $b_r^1$).
%        When we count maps in terms of trees, this yields a factor
%        $\left( \prod_{i=1}^\ell (2p_i+1) \prod_{i=1}^m (2q_i+1) \right)^{-1}.$
% \end{itemize}
% GC2- finalement ai reformulé.
%1. forget the signs (hence a factor $2^{n+1 - g}$, when we count maps in terms
%of numbers of trees); 2. choose in each cycle which vertex gets the label
%$w_r^1$ or $b_r^1$ (when we count maps in terms of trees, this yields a
%factor $\left( \prod_{i=1}^\ell (2p_i+1) \prod_{i=1}^m (2q_i+1) \right)^{-1}$
%).
choose in each cycle which vertex gets the label $w_r^1$ or $b_r^1$, and one
has to forget the signs of the $(n+1-g)$ cycles. This represents a factor
$2^{n+1-2g}\left( \prod_{r=1}^\ell (2p_r+1) \prod_{r=1}^m (2q_r+1)
\right)^{-1}$.%, when we count maps in terms of trees.

% GC2 : j'ai beau relire, je ne comprends pas le paragraphe suivant:
%       je ne sais plus trop où on en est et de quoi on parle
%       (mais je crois que je comprends la preuve, cela dit)
%Then we obtain a Cayley bipartite plane tree with vertex degree distribution $\HH$ and
%$\KK$, where $\HH$ (resp. $\KK$) can be obtained by splitting $\II$ (resp. $\JJ$)
%along {\em some} sequence $\pp$ (resp. $\qq$).
%Comparing this with equation \eqref{EqNbTrees}, we get the following formula 
%which corresponds to

% GC2: j'ai donc réécrit comme ça:
Multiplying \eqref{EqNbTrees} by the above factor, and summing over all
possible sequences $\pp$ and $\qq$ of total sum $g$, we conclude 
that the number of C-decorated trees
associated with labelled bipartite unicellular maps of white (resp. black)
vertex degree distribution $\II$ (resp. $\JJ$), is equal to $2^{n+1}$ times the right-hand side of
\eqref{eq:GS}. By Theorem~\ref{theo:main}, this number 
is also equal to $2^{n+1}\BiL(\II,\JJ)$. This ends the proof of Proposition~\ref{prop:GS}.
\end{proof}

This is the first combinatorial proof of~\eqref{eq:GS} 
(the proof by Goupil and Schaeffer involves representation theory of the symmetric group).
% GC2: removed
%In particular, it gives an efficient algorithm to do uniform random sampling of maps with prescribed
%degree distribution.
% ---> en raison de la discussion à la fin de la partie ``fractionnaire''
% EF: allege un peu cette phrase
% GC3: remis la référence aux interrogations de GS
Moreover, the authors of~\cite{GoSc00} found surprising that ``the two partitions
contribute independently
to the genus''. With our approach, this  is very natural, 
% GC2: why not ``quite obvious'' as before ?
% GC2 added
since the cycles are carried independently
by white and black vertices.
% pas super satisfait de cette dernière phrase mais voulais détailler un peu
% plus

\begin{remark}
    If we set $\JJ=(2^m)$, we find the number of monochromatic maps with
    $m$ edges and vertex-degree distribution $\II$.
    This explains why
    we did not consider separately the monochromatic and bipartite cases
    (as we do in the next section).
\end{remark}

%GC2: ajouté un s à formula dans le titre, même si on n'en traite qu'une
%explicitement

%VF5: j'aime pas trop le titre ``Summation formulas for colored maps''.
%Ces formules peuvent être vues soit comme formules pour des cartes colorées,
%soit comme sommatoires pour les cartes normales.
%Mais il me paraît inappropriés de mélanger les deux.
%Du coup, je change le titre
\subsection{Counting colored maps} 
\label{SubsectColoredMaps}

%VF5: comme il y a deux formules de plus que dans la version courte,
%j'ai essayé de structurer un peu le paragraphe..
%EF6 j'ai legerement reformule
In this paragraph, we deal with what was presented in the introduction
as the {\em first type} of formulas.
%EF9 legerement reecrit
These formulas give an expression for a certain \emph{sum} of coefficients counting 
 unicellular maps, the expressions being usually simpler than those
for the counting coefficients taken separately (like the Goupil-Schaeffer's
formula).    
These sums 
can typically be seen as 
counting formulas for {\em colored unicellular maps}
(where the control is on the number of colors, which gives
indirect access to the genus).   
%which are unicellular maps with additional information.\medskip

%VF: j'adapte ce paragraphe au fait que j'ai commenté la formule de Jackson
%We recover now two summation formulas, 
%one due to Harer-Zagier~\cite{HaZa86,ZvLa97} 
%for unicellular maps  (also very easily derived from the expression of $E(x,y)$) and one due to Jackson~\cite{Ja88} 
%for bipartite unicellular maps. These formulas have already been given several combinatorial proofs (in~\cite{La01,GoNi,BeHa} for the Harer-Zagier formula, in~\cite{ScVa08,BeHa} for Jackson's
%formula) using different principles.

%GC7: ici il y avait une sorte de faux titre qui trainait tout seul:
\subsubsection{A summation formula for unicellular maps.}
% je l'ai simplement viré.

We begin with Harer-Zagier's summation formula~\cite{HaZa86,ZvLa97} 
(which can also be very easily derived from the expression of $E(x,y)$).
In contrast to
%the Harer-Zagier recurrence, 
% GC3 suivi remarque de Valentin 
the formulas presented so far,
this one has already been given combinatorial 
proofs~\cite{La01,GoNi,BeHa} using different bijective constructions, 
%EF: je simplifie ce paragraphe pour ne pas insister trop en rappelant juste que pour la
%recurrence nous sommes les premiers a le faire. 
%GC3 moi j'aime bien insister !
but we want to insist on the fact that our construction gives
bijective proofs for all the formulas in a unified way.
%GC3 (j'ai raccourci sinon 14 pages)
%in a unified way
%bijective proofs for all the previously known formulas
% that enumerate unicellular maps.
\begin{prop}[Harer-Zagier summation formula~\cite{HaZa86,ZvLa97}]
\label{prop:hazaA1}
Let $A(v;n)$ be the number of unicellular maps with $n$ edges and $v$ vertices.
Then for $n\geq 1$
%Let $\eps(n,r)$ be the number of unicellular maps with $n$ edges
%where vertices are colored in $r$ possible colors, and such that each color $i\in\llbracket1,r\rrbracket$
%is used at least once. Then for $n,r$ positive integers
%$$
%\eps(n,r)=(2n-1)!!\ \!2^{r-1}\binom{r}{n-1}.
%$$
%Hence,
$$
\sum_{v}A(v;n)x^v=(2n-1)!!\sum_{r\geq 1}2^{r-1}\binom{n}{r-1}\binom{x}{r}.
$$
\end{prop}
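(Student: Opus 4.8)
The plan is to recover the Harer-Zagier summation formula by counting \cdec trees of a fixed number of cycles, exploiting the fact that, by Theorem~\ref{theo:main}, a unicellular map with $v$ vertices corresponds (up to the factor $2^{n+1}$) to \cdec trees whose underlying C-permutation has exactly $v$ cycles. Since a \cdec tree on $n$ edges is a pair $(T,\sigma)$ with $T$ a plane tree on $n$ edges and $\sigma$ a C-permutation on $n+1$ elements, and since the cycles of $\sigma$ correspond to the vertices of the associated map, I would first translate the left-hand side: the number of unicellular maps with $n$ edges and $v$ vertices equals $2^{-n-1}\Cat(n)$ times the number of C-permutations on $n+1$ elements with exactly $v$ cycles. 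Thus everything reduces to a clean count of C-permutations on $n+1$ elements refined by their number of cycles.

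Next I would introduce the coloring interpretation that gives access to $\sum_v A(v;n)x^v$ directly, rather than extracting each $A(v;n)$ separately. The quantity $\sum_v A(v;n)x^v$, evaluated at a positive integer $x$, counts unicellular maps together with a proper assignment that distributes the $v$ vertices among $x$ available colors in all $x^v$ ways; equivalently, via the bijection, it counts \cdec trees where each cycle of $\sigma$ receives one of $x$ colors. So I would set up the enumeration of colored \cdec trees: a plane tree on $n$ edges (there are $\Cat(n)$ of these), together with a C-permutation on its $n+1$ vertices whose cycles are each colored with one of $x$ colors. The factor $2^{n+1}$ from the theorem is handled by the signs: every cycle of a C-permutation carries a sign, contributing a total factor that I would reconcile against the $2^{n+1}$ in $2^{n+1}\cE_g(n)\simeq\cT_g(n)$.

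The combinatorial heart is then to count, for each $r\ge 1$, the ways to build an $x$-colored C-permutation on $n+1$ elements using exactly $r$ distinct colors, and to see the binomial structure emerge. I would proceed by first choosing the $r$ colors actually used, giving $\binom{x}{r}$; this already explains the $\binom{x}{r}$ factor and shows that the polynomial in $x$ is naturally written in the binomial basis. Having fixed which $r$ color classes are nonempty, I must count the colored C-permutations on $n+1$ elements whose cycles surject onto these $r$ colors. Here I expect the generating-function machinery from the previous subsection to do the work: recall $C(x,y)=\left(\frac{1+y}{1-y}\right)^x-1$, where $x$ marks cycles; replacing the per-cycle weight so that $x$ instead marks \emph{colors} amounts to an exponential-formula computation, and extracting the surjective-onto-$r$-colors part introduces the factor counting ordered set partitions, from which $2^{r-1}\binom{n}{r-1}$ should fall out after simplification.

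The main obstacle I anticipate is matching the precise arithmetic factors: showing that, after fixing $r$ used colors, the number of sign-decorated C-permutations on $n+1$ elements realizing exactly those $r$ color classes, weighted and divided by the appropriate power of $2$ and combined with the $\Cat(n)$ plane trees, collapses exactly to $(2n-1)!!\,2^{r-1}\binom{n}{r-1}$. Concretely, I would isolate the coefficient of $y^{n+1}$ in $\left(\frac{1+y}{1-y}\right)^{r}-(\text{fewer-color terms})$, reorganize into the binomial basis in $x$ by the surjection-to-subset argument, and verify the odd-double-factorial normalization using $\frac{1}{(n+1)!}c_g(n+1)=\frac{2}{(2n-1)!!}\eps_g(n)$ noted earlier. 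This is bookkeeping rather than a conceptual difficulty, and I would present it as an ``easy exercise'' consequence of the generating function $E(x,y)=\left(\frac{1+y}{1-y}\right)^x$, reading off the claimed expansion by extracting the coefficient of $y^{n+1}$ and rewriting $\left(\frac{1+y}{1-y}\right)^x$ in powers of $\frac{2y}{1-y}$ to expose both the $\binom{x}{r}$ and the $2^{r-1}\binom{n}{r-1}$ factors simultaneously.
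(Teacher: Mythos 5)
Your proposal is correct, but at the decisive counting step it takes a genuinely different route from the paper. You share the paper's setup exactly: reduce $\sum_v A(v;n)x^v$ to counting, for each $r$, maps whose vertices are surjectively colored with $r$ colors (whence the basis $\binom{x}{r}$), and transport this via Theorem~\ref{theo:main} to C-decorated trees whose signed cycles are surjectively $r$-colored. The paper then finishes \emph{bijectively}: each color class is a nonempty C-permutation, hence by Lemma~\ref{lem:iso_signed} a signed sequence, and concatenating the $r$ signed sequences encodes the whole colored C-permutation as a permutation of $n+1$ labelled elements together with $r$ signs and a choice of $r-1$ split positions among positions $2,\dots,n+1$, giving exactly $(n+1)!\,2^r\binom{n}{r-1}$, whence $A_r(n)=2^{-n-1}\Cat(n)(n+1)!\,2^r\binom{n}{r-1}=(2n-1)!!\,2^{r-1}\binom{n}{r-1}$. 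You instead compute this count with the generating-function machinery of the preceding subsection: each color class contributes an exponential generating function $\frac{1+y}{1-y}-1=\frac{2y}{1-y}$, so surjectively $r$-colored C-permutations have generating function $\left(\frac{2y}{1-y}\right)^r$, and $(n+1)!\,[y^{n+1}]\left(\frac{2y}{1-y}\right)^r=(n+1)!\,2^r\binom{n}{r-1}$; equivalently, expanding $E(x,y)=\left(1+\frac{2y}{1-y}\right)^x=\sum_{r}\binom{x}{r}\left(\frac{2y}{1-y}\right)^r$ produces both factors at once. This is sound, and your normalization via $\frac{1}{(n+1)!}c_g(n+1)=\frac{2}{(2n-1)!!}\eps_g(n)$ checks out; but note it is essentially the algebraic derivation the paper explicitly mentions and deliberately sets aside (the formula ``can also be very easily derived from the expression of $E(x,y)$''). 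What the paper's route buys is a proof that is bijective end-to-end, in the same unified style as the other formulas of that section (and hence, e.g., a uniform random sampling algorithm); what your route buys is brevity and mechanical coefficient extraction, at the cost of replacing the one genuinely combinatorial step --- Lemma~\ref{lem:iso_signed} plus concatenation --- by inclusion-exclusion/generating-function algebra.
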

\begin{proof}
To be comprehensive, let us begin by explaining
the well-known combinatorial reformulation
of this formula.
Let  $A_r(n)$ be the number of unicellular maps with $n$
edges, each vertex having a color in $\llbracket 1,r\rrbracket$, and each color in $\llbracket 1,r\rrbracket$
being used at least once.
Then 
\begin{equation}
    \label{HarerZagierCombi}
    \sum_{v} A(v;n) x^v=\sum_{r\geq 1} A_r(n) \binom{x}{r}.
\end{equation}
Indeed, both sides count the number of pairs $(M,\varphi)$,
where $M$ is a unicellular map with $n$ edges and $\varphi$ is 
a mapping from the vertex set of $M$ to a given set $X$ of size $x$.
For the left-hand side, this is clear: for a given map,
there are $x^v$ such mappings, where $v$ is the number of vertices of the map.
But we can count these pairs in another way.
Let us consider the pairs $(M,\varphi)$ for which the image set of $\varphi$
is a given set $X' \subseteq X$.
If $r$ is the size of $X'$, such a pair is the same thing as
a unicellular map colored with colors in $\llbracket 1,r\rrbracket$
and each color being used at least once.
Therefore, for a fixed $X'$, one has $A_r(n)$ such pairs $(M,\varphi)$.
As there are $\binom{x}{r}$ sets $X' \subseteq X$ of size $r$,
there are in total 
\[\sum_{r\geq 1} A_r(n) \binom{x}{r}\]
pairs $(M,\varphi)$, which proves identity~\eqref{HarerZagierCombi}.\medskip

Thus, it suffices to prove that $A_r(n)=(2n-1)!!\ \!2^{r-1}\binom{n}{r-1}$. 
Our main bijection sends unicellular maps colored
with colors in $\llbracket 1,r\rrbracket$           
(each color being used at least once) onto
\cdec trees with $n$ edges, where each (signed) cycle has a color in $\llbracket 1,r\rrbracket$,
and such that each color in $\llbracket 1,r\rrbracket$ is used by at least one cycle. 
Each of the $r$ colors yields a (non-empty) C-permutation, which
can be represented as a signed sequence, according to Lemma~\ref{lem:iso_signed}.
%GC2013: ici le referee n'aimait pas la formulation
% et j'avoue qu'à la relecture moi non-plus. alors j'ai changé
%Then one can concatenate these $r$ signed sequences into a unique sequence $S$ 
%of length $n+1$,  
Then one can encode these $r$ signed sequences
$^{\epsilon_1}S_1,^{\epsilon_2}S_2\dots,^{\epsilon_r}S_r$ by the triple
$(S,T,U)$ where $S=S_1S_2\dots S_r$ is their concatenation (it is a sequence of length $n+1$),
where $T=(\epsilon_1,\epsilon_2,\dots,\epsilon_r)$ is the $r$-tuple giving their signs and where $U$
is the subset of $r-1$ elements among the elements between positions $2$ and
$(n+1)$ in $S$, that indicates the starting elements of the sequences
$S_2,\dots, S_r$.
%(in order to recover from $S$ the $r$ signed sequences). 
%GC 2013 :added
For instance if $r=3$ and if the signed sequences corresponding respectively
to colors $1,2,3$ are $S_1=^+(3,9,4)$, $S_2=^-(5,8,6,2)$, and $S_3=^-(1,7)$, 
then the concatenated sequence is $S=(3,9,4,5,8,6,2,1,7)$,
together with the $3$ signs $T=(+,-,-)$ and the two selected elements $U=\{5,1\}$. 
It is clear that this correspondence is bijective.
Hence the number of such \cdec trees is $(n+1)!\ \!2^r\binom{n}{r-1}$, and 
by Theorem~\ref{theo:main},
% GC:ici un hack pourri pour permettre au carré de fin de preuve de se coller à
% droite de l'équation (avec double-dollar il se met en dessous, à la ligne):
%
\\
$~\hspace{2cm}\displaystyle
A_r(n)=2^{-n-1}\Cat(n)(n+1)!\ \!2^r\binom{n}{r-1}=(2n-1)!!\ \!2^{r-1}\binom{n}{r-1}.
$
\end{proof}\medskip

% GC2: modified next sentence:
%The summation formulas of papers \cite{ScVa08,MoVa09}
%(these are formulas for colored bipartite maps,
%taking the number of colors or the degree distributions into account)
%can also be recovered with our bijection.
%The proofs follow roughly the same guideline,
%but are omitted here for brevity.
%The papers \cite{ScVa08,MoVa09} contain other summation formulas, that deal
%with colored bipartite maps, taking the number of colors or the degree
%distributions into account.
%They can all be recovered from our bijection.
%The proofs follow roughly the same guideline,
%but are omitted here for brevity.

%VF: To make things shorter, I suggest not to state explicitely
%``Jackson's summation formula''
%VF5:j'ai décommenter tout ça pour la version longue.

%GC7: même faux titre que ci-dessus, je l'ai également viré. Apparemment il y a
%eu un petit problème d'édition. Zut, va falloir relire!
\subsubsection{A summation formula for bipartite unicellular maps.}

By Theorem~\ref{theo:main}, a \cdec tree associated to a 
bipartite unicellular map is a bipartite plane tree 
 such that each signed cycle must contain only white (resp. black) vertices. 
Recall that the $n+1$ vertices carry distinct labels from $1$ to $n+1$
(the ordering follows by convention a left-to-right depth-first traversal, 
see Figure~\ref{fig:C-tree}(c)). 
Without loss of information the $i$ black vertices (resp. $j$ white vertices)
can be relabelled from $1$ to $i$ (resp. from $1$ to $j$) in the order-preserving
way; we take here this convention for labelling the vertices of such a \cdec tree. 
We now recover the following summation formula due to Jackson 
%EF6: ai rajoute les references aux autres preuves bijectives connues)
(different bijective proofs have been given in~\cite{ScVa08} and in~\cite{BeHa}): 

\begin{prop}[Jackson's summation formula~\cite{Ja88}]\label{prop:Ja}
Let $B(v,w;n)$ be the number of bipartite unicellular maps with $n$ edges, $v$ black vertices
and $w$ white vertices. 
Then for $n\geq 1$
$$
\sum_{v,w}B(v,w;n)y^vz^w=n!\sum_{r,s\geq 1}\binom{n-1}{r-1,s-1}\binom{y}{r}\binom{z}{s}.
$$
\end{prop}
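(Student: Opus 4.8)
The plan is to mirror the proof of the Harer--Zagier summation formula (Proposition~\ref{prop:hazaA1}), adapting the sequence-concatenation bookkeeping to the bipartite setting. By Theorem~\ref{theo:main}, it suffices to count the \cdec trees associated to bipartite unicellular maps in which each vertex carries a color, with the control organized by the number of colors used separately on the two sides. Concretely, I would define $B_{r,s}(n)$ to be the number of bipartite unicellular maps with $n$ edges where each black vertex receives a color in $[1..r]$, each white vertex a color in $[1..s]$, and every one of the $r$ black colors and every one of the $s$ white colors is used at least once. Since a color class on either side is monochromatic, our main bijection sends these onto bipartite \cdec trees in which each signed cycle consists of vertices of a single side and a single color; the key structural fact I would invoke is the one recorded just before the statement, namely that in a \cdec tree associated to a bipartite map each signed cycle contains only white or only black vertices.

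First I would decompose the count by side. On the black side, the vertices colored with the $r$ prescribed colors form, color by color, a collection of $r$ nonempty C-permutations on the black vertex set; likewise the white side gives $s$ nonempty C-permutations. Applying Lemma~\ref{lem:iso_signed} to each color class, each black color yields a signed sequence, and concatenating the $r$ black signed sequences produces a single sequence of the $v$ black labels together with $r$ signs and a choice of $r-1$ cut positions among the black non-initial positions; symmetrically on the white side with $s$ signs and $s-1$ cuts. The product of the two sides, together with the plane-tree shape carrying everything, is exactly the data of the colored bipartite \cdec tree, and this correspondence is a bijection by the same inversion argument as in Lemma~\ref{lem:iso_signed}.

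Next I would perform the counting. Using the convention stated in the text that the black (resp.\ white) vertices are relabelled $1,\dots,v$ (resp.\ $1,\dots,w$) in order-preserving fashion, the concatenated-sequence encoding gives, for fixed underlying bipartite plane tree, a factor $2^{r}\binom{v-1}{r-1}$ on the black side and $2^{s}\binom{w-1}{s-1}$ on the white side; summing over the tree shapes via the bipartite plane-tree count built into the bijection produces the total number of colored \cdec trees. Dividing by $2^{n+1}$ as dictated by Theorem~\ref{theo:main} recovers $B_{r,s}(n)$ in closed form. Finally, the ordinary generating identity $\sum_{v,w}B(v,w;n)y^v z^w = \sum_{r,s}B_{r,s}(n)\binom{y}{r}\binom{z}{s}$ follows from the standard inclusion--exclusion passage between ``each color used'' counts and free colorings, exactly as the scalar $x^v=\sum_r\binom{x}{r}\,r!\,S(v,r)$-type identity is used in the single-color case; matching the resulting $\binom{n-1}{r-1,s-1}$ and the prefactor $n!$ completes the proof.

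The main obstacle I anticipate is the bipartite bookkeeping of the two independent concatenations: I must verify that the $r-1$ and $s-1$ cut positions are chosen among the correct non-initial slots on each side and that the normalization by $2^{n+1}$ distributes as $2^{r+s}$ over the two color tallies while leaving the plane-tree and relabelling conventions consistent. In particular I would check carefully that the monomial bookkeeping yields the trinomial coefficient $\binom{n-1}{r-1,s-1}$ rather than a product of two separate binomials, since the shared parameter is the edge count $n$ and not the individual vertex counts; this is where the bipartite genus balance $v+w=n+1-2g$ enters and where a naive side-by-side argument could slip.
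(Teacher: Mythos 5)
Your proposal follows the paper's own proof essentially step for step: reduce to counting surjectively bicolored maps $B_{r,s}(n)$, transport them through Theorem~\ref{theo:main} to bipartite \cdec trees whose signed cycles are monochromatic, encode each side's colored C-permutation via Lemma~\ref{lem:iso_signed} as a sequence of labels together with $r$ (resp.\ $s$) signs and $r-1$ (resp.\ $s-1$) cut positions, and divide by $2^{n+1}$; the concluding inclusion--exclusion passage to $\binom{y}{r}\binom{z}{s}$ is also the paper's (implicit) reduction. One small correction: the convolution that produces $n!\binom{n-1}{r-1,s-1}$ runs over the split $i+j=n+1$ of \emph{tree} vertices, weighted by $\Nar(i,j;n)\,i!\,j!\,2^{r+s}\binom{i-1}{r-1}\binom{j-1}{s-1}$ (note the factors $i!,j!$ for the label sequences, which your ``factor'' omits), and not over the map-vertex balance $v+w=n+1-2g$ you invoke --- but this is a notational slip, not a gap in the approach.
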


%\begin{proof} Omitted. The proof is an extension of the one of
%Proposition~\ref{prop:hazaA1} to the bipartite case. It makes use of the
%classical fact
%that the number of bipartite plane trees with $n$ edges, $i$ black vertices and
%$j$ white vertices, is given by the Narayana number $\frac1{n}\binom{n}{i}\binom{n}{j}$.
%\end{proof}
\begin{proof}
    As for the Harer-Zagier formula, there is a well-known combinatorial 
    reformulation of this statement.
    Namely, it suffices to prove that, for $r,s\geq 1$,
 the number $B_{r,s}(n)$ of bipartite unicellular maps with $n$
edges, each black (resp. white) vertex having a so-called \emph{b-color} in $\llbracket1,r\rrbracket$ 
(resp. a so-called \emph{w-color} in $\llbracket1,s\rrbracket$),  
such that each b-color in $\llbracket1,r\rrbracket$ (resp. w-color
in $\llbracket1,s\rrbracket$) is used at least once, is given by $B_{r,s}(n)=n!\binom{n-1}{r-1,s-1}$. 
For $n,i,j$ such that $i+j=n+1$, 
consider a bipartite \cdec tree with $n$ edges, $i$ black vertices, $j$ white vertices, where each black (resp. white) signed cycle  has a b-color
in $\llbracket1,r\rrbracket$ (resp. a w-color in $\llbracket1,s\rrbracket$), and each b-color in $\llbracket1,r\rrbracket$ (resp. w-color
in $\llbracket1,s\rrbracket$) is used at least once. By the same argument as in Proposition~\ref{prop:hazaA1},
the C-permutation and b-colors on black vertices can be encoded by a sequence $S_b$ 
of length $i$ of distinct integers in $\llbracket1,i\rrbracket$, together with
a sequence of $r$ signs and 
a subset of $r-1$ elements among the $i-1$ elements at positions from $2$ to $i$ in $S_b$. And the C-permutation and w-colors on white vertices can be encoded by a sequence $S_w$ 
of length $j$ of distinct integers in $\llbracket1,j\rrbracket$, together with
a sequence of $s$ signs and 
a subset of $s-1$ elements among the $j-1$ elements at positions from $2$ to $j$ in $S_w$. Hence there are $\Nar(i,j;n)2^{r+s}i!\binom{i-1}{r-1}j!\binom{j-1}{s-1}$
such \cdec trees, where $\mathrm{Nar}(i,j;n)$ (called the \emph{Narayana} number) is the  number of bipartite plane trees with $n$ edges, $i$ black vertices and $j$ white vertices, given by 
$\mathrm{Nar}(i,j;n)=\frac1{n}\binom{n}{i}\binom{n}{j}$.  By Theorem~\ref{theo:main},
\begin{align*}
    B_{r,s}(n)&=&2^{-n-1}2^{r+s}\sum_{i+j=n+1}\Nar(i,j;n)i!j!\binom{i-1}{r-1}\binom{j-1}{s-1}\\
&=&n!(n-1)!\frac{2^{r+s-n-1}}{(r-1)!(s-1)!}\sum_{i+j=n+1 \atop i \ge r, j \ge s}\frac{1}{(i-r)!(j-s)!}.
\end{align*}
But we have
$$
\sum_{i+j=n+1}\frac{1}{(i-r)!(j-s)!}=\sum_{i+j=n+1-r-s}\frac{1}{i!j!}=\frac{2^{n+1-r-s}}{(n+1-r-s)!}.
$$
Hence $B_{r,s}(n)=n!\binom{n-1}{r-1,s-1}$. 
\end{proof}\medskip

%GC2013: ajout de la section de Valentin (Harer-Zagier par degrés)
\subsubsection{A refinement of the Harer-Zagier summation formula}
\label{subsec:Refinement_Harer_Zagier}
The proof method above can be used to keep track of the vertex
degree distribution in the Harer-Zagier formula.
Before stating the resulting formula, let us mention that 
other methods can also keep track of this statistics,
for instance the bijective approach developed in \cite{BeHa}\footnote{
Proposition~\ref{prop:Refinement_Harer_Zagier} can also be deduced
from a formula of A. Morales and E. Vassilieva (Proposition~\ref{ThmMoralesVassilieva} below)
using \cite[Lemma 9]{BernardiEtAlSeparation}.}.
Thus, although the formula had not yet been stated explicitly in the literature
(as far as we know), all the elements needed to prove it were already
there\footnote{This formula was known to an anonymous referee, who suggested we
include its proof in the present paper.}.
Of course, the proof presented here is new and fits in our unified framework.

\begin{prop}
    \label{prop:Refinement_Harer_Zagier}
    Let $m_\rho$ and $p_\lambda$ be the monomial and power sum bases of the ring of
    symmetric functions and $\xx$ be an infinite set of variables.
    We denote by $\Ai(\lambda)$ the number of unicellular maps with
    degree distribution $\lambda$.
    Then, for any integer $n$, one has:
    \[ \sum_{\lambda \vdash 2n} \Ai(\lambda) p_\lambda(\xx)
    = \sum_{\rho \vdash 2n} \frac{n (2n-\ell(\rho))!}
    {(n-\ell(\rho)+1)!} 2^{\ell(\rho)-n} m_\rho(\xx).\]
\end{prop}
Let us make three remarks on this statement.
First, together with the trivial fact that $\Ai(\lambda)=0$ if
$\lambda$ is a partition of an odd number,
it entirely determines the numbers $\Ai(\lambda)$
(as power sums form a basis of the ring of symmetric functions).
Second, it implies the Harer-Zagier summation formula
(which can be recovered by setting $\xx=(1,\dots,1,0,\dots)$
with exactly $x$ times the value $1$).
Third, it admits an equivalent combinatorial formulation,
that we shall present now.

As in the previous subsection, we shall consider colored maps,
that is maps whose vertices are colored with numbers from $1$ to $r$,
each color being used at least one.
For such a map, one can consider its colored vertex degree distribution:
by definition, it is the composition $\II=(I_1,\dots,I_r)$ such that $I_k$
is the sum of the degrees of the vertices of color $k$.

We denote by $\AiC(\II)$ the number of colored maps with
colored vertex degree distribution $\II$.
Then, using the tools of \cite[Section 2]{MoVa09},
one can easily show that Proposition \ref{prop:Refinement_Harer_Zagier}
is equivalent to the following statement,
that we can prove using our main bijection.
\begin{prop}
    \label{prop:Formule_AiC}
    For any composition $\II$ of $2n$ of length $r$,
    \[\AiC(\II)=\frac{n (2n-r)!}
        {(n-r+1)!} 2^{r-n}.\]
\end{prop}
\begin{proof}
   We shall first consider the case where $\II$ has length $n+1$.
    This means that we count rooted unicellular maps with $n$ edges and
    vertices of $n+1$ colors.
    But a unicellular map is necessarily connected
    and, hence, has at most $n+1$ vertices.
    Therefore, we are counting rooted unicellular maps with $n+1$ labelled
    vertices, that is, rooted plane trees with labelled vertices of
prescribed degrees. 
%EFMay ajout de references 
In that case, one can show that $\AiC(\II)=2n!$ by 
several methods (e.g., the cycle lemma, or 
Pitman's aggregation process~\cite{Pitman}, 
or the more recent method by Bernardi and Morales~\cite{BernardiMoralesSymet}). We give
here a short proof by induction.%  
    Note that an unrooted vertex-labelled plane tree cannot have any symmetry
    and thus can always be rooted in $2n$ ways.
    So we shall rather compute the number $\UT(\II)$
    of unrooted vertex-labelled plane trees with degree
    distribution $\II$.

    We shall prove by induction that $\UT(\II)=(n-1)!$.

    For $n=1$, the only possibility is $\II=(1,1)$ and there is
    only one tree with such degree distribution: 
    $\begin{tikzpicture}[scale=.5]
    \tikzstyle{vertex}=[circle,draw,inner sep=0.5pt,minimum size=1mm]
    \node (v1) at (0,0) [vertex] {\footnotesize $1$};
    \node (v2) at (1,0) [vertex] {\footnotesize $2$};
    \draw (v1)--(v2);
    \end{tikzpicture}$.
    Thus $\UT( (1,1))$.

    Let $\II$ be a composition of $2n$ of length $n+1$.
    This composition must contain a part equal to $1$
    and without loss of generality ($\UT(\II)$ is invariant by permutation
    of the parts of $\II$), we may assume that $I_{n+1}=1$.
    This means that we are counting trees $T$,
    in which the vertex labelled $n+1$ is a leaf.
    Denote by $j$ the label of the vertex to which this leaf is attached.
    Then, removing the leaf $n+1$ from $T$, we obtained
    a tree $T'$ of degree distribution 
    $\II^{(j)}=(I_1,\dots,I_j-1,\dots,I_n)$.
    For each such tree $T'$, a new leaf labelled $n+1$ can be attached
    to the vertex $j$ in $I_j-1$ ways (recall that we are dealing with plane
    trees).
    Therefore,
    \[\UT(\II)= \sum_{j=1}^n (I_j-1) \cdot \UT(\II^{(j)}).\]
    From this induction relation, it is immediate to see that 
    $\UT(\II)=(n-1)!$ for any composition $\II$ of $2n$ of length $n+1$.
    Considering rooted trees instead  of unrooted trees,
    we get that, in this case
    \[\AiC(\II)=2n! .\]

    Turn back to the general case. Let $\II$ be a composition of $2n$
    of length smaller than $n+1$.
    Let us consider a composition $\HH$ of length $n+1$ refining $\II$.
 and consider a labelled plane tree $T$ whose vertex degree distribution is $\HH$
    (because of the labels, this distribution is ordered and can be
    seen as a composition).
     
Let us color our tree as follows:
we give color $s$ to a vertex if 
the corresponding part of $\HH$ is contained in the $s$-th part of $I$. 
Since the tree is labelled, 
vertices with the same color are totally ordered.
Hence if we add the data of a sign per color ($2^r$ choices
for all signs), using Lemma~\ref{lem:iso_signed},
we can see the vertices with the same color as endowed
with a $C$-permutation.

Putting all these $C$-permutations together, we obtain a $C$-permutation
of the vertices of the tree $T$, which has the following property:
vertices in the same cycle always have the same color.
Applying our main bijection (Theorem~\ref{theo:main}),
we obtain a unicellular map.
The vertices of this map have a canonical coloration,
as each vertex corresponds to a cycle of the $C$-permutation.
By construction, this colored map has colored degree distribution $\II$. 

To sum up, by Theorem~\ref{theo:main} and the construction above,  
 each colored unicellular map with colored 
degree distribution $\II$ can be obtained in $2^{n+1}$
different ways from
\begin{itemize}
    \item a labelled plane tree $T$ of vertex
     degree distribution given by $\HH$  
     for {\em some} refinement $\HH$ of $\II$ of length $n+1$;
    \item the assignment of a sign to each color.
\end{itemize}
The number of possible signs is always $2^r$,
so this yields a constant factor.
For a given composition $\HH$, the number of corresponding 
trees is $2n!$ (as seen above);
in particular, it does not depend on $\HH$.
Besides, looking at compositions as descent sets,
it is easy to see that there are 
\[\binom{2n-\ell(\II)}{n+1-\ell(\II)}\]
refinements $\HH$ of $\II$ of length $n+1$.
Finally, by Theorem~\ref{theo:main}, we get:
\[
2^{n+1} \AiC(\II)=2^r \cdot \binom{2n-\ell(\II)}{n+1-\ell(\II)}
\cdot 2n!,
\]
which simplifies to the claimed formula.
\end{proof}

%EFMay du coup j'enleve cette remarque
%\begin{remark}
%The {\em tree} case ($\ell(\II)=n+1$) in this proposition is certainly well-%known.
%In particular, it is equivalent to the refined enumeration formula counting Cayley trees by vertex degrees~\cite{CayleyTrees}.
%\end{remark}

%VF5:ajout de la preuve de MV
%GC7: A tiens encore un titre. SI l'on excepte les deux titres bizarres que
%j'ai déjà effacés, c'est le seul dans sa catégorie. Bon, allez, je le vire
%aussi.
\subsubsection{A refinement.}

A. Morales and E. Vassilieva \cite{MoVa09} have established a
very elegant summation formula for bipartite maps, counted with respect
to their degree distributions,
%EF6 ai rajoute cette remarque pour justifier que c'est bien un refinement
%GC7: puisque j'ai viré le titre ``refinement'' je le met ici:
which can be viewed as a refinement of Jackson's summation formula
(indeed, it is an easy exercise
to recover Jackson's summation formula out of it). 
%EFMAy ajout de la phrase suivante qui ne mange pas de pain
It can be noted that this formula is to Jackson's summation formula 
what Proposition~\ref{prop:Refinement_Harer_Zagier}
is to the Harer-Zagier summation formula: 

\newcommand{\citationProblematiqueBis}{\cite[Theorem 1]{MoVa09}}
\begin{prop}[Morales and Vassilieva \citationProblematiqueBis]
    Let $m_\lambda$ and $p_\rho$ be the monomial and power sum bases of the ring of
    symmetric functions and $\xx$ and $\yy$ two infinite sets of variables.
    Then, for any $n\geq 1$,
    %EF6 j'ai echange p et m, est-ce que c'est correct ?
    \[ \sum_{\lambda,\mu \vdash n} \Bi(\lambda,\mu) p_\lambda(\xx) p_\mu(\yy)
    = \sum_{\rho,\nu \vdash n} 
    \frac{n (n-\ell(\rho))! (n-\ell(\nu))!}{(n+1-\ell(\rho)-\ell(\nu))!}
    m_\rho(\xx) m_\nu(\yy).\]
    \label{ThmMoralesVassilieva}
\end{prop}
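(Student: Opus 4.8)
The plan is to interpret both sides of the identity as counting bipartite \cdec trees (equivalently, via Theorem~\ref{theo:main}, bipartite unicellular maps) but organized according to two different symmetric-function bases, and then to use the already-established count of $\BiL(\II,\JJ)$ from Proposition~\ref{prop:GS} to match coefficients. More precisely, expanding the power sums $p_\lambda(\xx)p_\mu(\yy)$ into monomials in the variables $x_i,y_j$ produces, for each monomial, a coefficient counting the ways to assign the parts of $\lambda$ and $\mu$ to labelled white and black vertex-classes; this is exactly the combinatorial content that turns the unlabelled count $\Bi(\lambda,\mu)$ into the labelled count $\BiL(\II,\JJ)$. So the first step is to rewrite the left-hand side as a generating function over \emph{labelled} bipartite unicellular maps, indexed by compositions $\II$ (white degrees) and $\JJ$ (black degrees), with $\BiL(\II,\JJ)$ as coefficients.

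Next I would extract the coefficient of a fixed monomial $m_\rho(\xx)m_\nu(\yy)$ on the right-hand side and compare it to the corresponding sum on the left. The key simplification is that once we pass to the labelled world and sum $\BiL(\II,\JJ)$ over all compositions $\II,\JJ$ with prescribed underlying partitions $\rho,\nu$, the complicated inner binomial-product sum in \eqref{eq:GS} should telescope. Indeed, summing $\prod_r \frac{1}{2p_r+1}\binom{i_r-1}{2p_r}$ over all compositions $\II$ refining a fixed multiset of part-sizes, together with the sum over the $p_r$, is precisely the kind of convolution that collapses: the generating function $\sum_{p\ge 0}\frac{1}{2p+1}\binom{i-1}{2p}$ evaluated appropriately, summed against all ways of splitting $n$ into parts, should reproduce the factor $\frac{(n-\ell(\rho))!(n-\ell(\nu))!}{(n+1-\ell(\rho)-\ell(\nu))!}$ and the overall $n$. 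This is the heart of the computation.

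Concretely, the cleanest route is probably not to manipulate \eqref{eq:GS} directly but to count the relevant \cdec trees from scratch, exactly as in the proofs of Propositions~\ref{prop:hazaA1} and~\ref{prop:Ja}. A \cdec tree contributing to the monomial $m_\rho(\xx)m_\nu(\yy)$ is a bipartite plane tree whose white cycles have sizes given by $\rho$ and whose black cycles have sizes given by $\nu$ (as multisets of odd... no: the cycle sizes need not be odd here since the parts of $\rho,\nu$ are the \emph{numbers of vertices per color}, each color-class being a union of cycles). The factor $\frac{n(n-\ell(\rho))!(n-\ell(\nu))!}{(n+1-\ell(\rho)-\ell(\nu))!}$ strongly suggests encoding each color-class as a signed sequence via Lemma~\ref{lem:iso_signed} and then counting the underlying bipartite plane trees using the $g=0$ labelled count $n(\ell-1)!(m-1)!$ from \eqref{eq:g0}, with $\ell=\ell(\rho)$, $m=\ell(\nu)$ playing the role of the number of white/black color-classes. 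Setting up this encoding and checking that the signs and the choice of sequence-representatives cancel against the $2^{n+1}$ factor from Theorem~\ref{theo:main} should yield the right-hand side directly.

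The main obstacle I anticipate is the bookkeeping in matching the two labelling conventions: the $m_\rho,m_\nu$ side records only the multiset of color-class sizes, whereas the \cdec tree count naturally produces compositions (ordered color-classes) and labelled vertices. Getting the symmetry factors exactly right---reconciling $p_\lambda p_\mu$ on the left (which carries its own automorphism factors $\prod m_i(\lambda)!$, etc.) with $m_\rho m_\nu$ on the right, and tracking where the single surviving factor of $n$ comes from---is where the real care is needed. Once the dictionary between signed sequences, color-classes, and monomial coefficients is pinned down, the identity should follow by comparing coefficients of each $m_\rho(\xx)m_\nu(\yy)$, with the $g=0$ cyclic-lemma count \eqref{eq:g0} doing the essential combinatorial work and the factorial ratio emerging from the multinomial count of how the $n+1$ vertices split among the color-classes.
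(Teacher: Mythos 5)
Your overall strategy---reformulate the identity as a statement about vertex-colored bipartite maps, transport it through Theorem~\ref{theo:main}, encode the C-permutation carried by each color class as a signed sequence via Lemma~\ref{lem:iso_signed}, and count the underlying trees with the genus-$0$ count \eqref{eq:g0}---is exactly the strategy of the paper's proof. However, your combinatorial dictionary for the right-hand side is wrong, and this is not a cosmetic slip. Expanding $p_\lambda(\xx)=\prod_i\bigl(\sum_j x_j^{\lambda_i}\bigr)$ shows that the coefficient of a monomial $\prod_j x_j^{a_j}$ on the left-hand side counts maps together with an assignment of white vertices to colors $j$ in which $a_j$ is the \emph{sum of the degrees} of the vertices receiving color $j$. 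Several vertices may share a color, so what appears is the colored count $\BiC$ of Proposition~\ref{PropReformulationMV}, not the labelled count $\BiL$ as your first paragraph asserts (colorings are not injective, so this is more than the passage from $\Bi$ to $\BiL$). In particular a part of $\rho$ is neither a cycle size nor ``the number of vertices per color'': the latter reading is internally inconsistent, since the number of white vertices depends on the genus while $\rho\vdash n$ is fixed.

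The second, more serious gap is in the counting step you call ``direct''. Equation~\eqref{eq:g0} counts trees by their numbers of \emph{vertices}, and the relevant C-decorated trees have $n+1$ vertices, distributed among the color classes in a way not determined by $(\rho,\nu)$. So it cannot be applied ``with $\ell=\ell(\rho)$, $m=\ell(\nu)$ playing the role of the number of color-classes'': that would yield $n\,(\ell(\rho)-1)!\,(\ell(\nu)-1)!$, which differs from the claimed coefficient (take $\ell(\rho)=\ell(\nu)=1$: you would get $n$, whereas the formula gives $n!$, the correct total number of bipartite unicellular maps with $n$ edges). What is actually needed---and is the heart of the paper's computation---is a sum over all refinements $\HH$ of $\rho$ and $\KK$ of $\nu$ with $\ell(\HH)+\ell(\KK)=n+1$, recording the individual tree-vertex degrees inside each color class: one applies \eqref{eq:g0} with $(\ell(\HH),\ell(\KK))$, weights by the number $\binom{n-\ell(\rho)}{\ell(\HH)-\ell(\rho)}\binom{n-\ell(\nu)}{\ell(\KK)-\ell(\nu)}$ of refinements of given lengths, and then the identity $\sum_{h_1+h_2=h}\frac{1}{h_1!\,h_2!}=\frac{2^h}{h!}$ produces the factorial ratio, the powers of $2$ from the signs cancelling against the $2^{n+1}$ of Theorem~\ref{theo:main}. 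Your alternative route through \eqref{eq:GS} suffers from the same two problems (the reduction to $\BiL$ does not match the monomial coefficients, and the ``telescoping'' is never carried out). With the dictionary corrected and the refinement sum performed, your sketch becomes precisely the paper's proof.
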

The original proof given in \cite{MoVa09} goes through a complicated bijection with
newly introduced objects called {\em thorn trees} by the authors. 
%EF6 ai rajoute reference a Olivier dont l'approche permet de trouver cette formule,
The bijective method in~\cite{BeHa} (which is well adapted to summation formulas)
also makes it possible to get the formula.  
And a short non-bijective proof has been given recently in 
\cite{KatyaSummationFormulaCharacters}
using characters of the symmetric groups and Schur functions.
We explain here how this result can be recovered from our bijection.
The proof is very similar to the one of Goupil-Schaeffer's formula.

Let us first recall that,
%GC2013: modif Valentin
as Proposition~\ref{prop:Refinement_Harer_Zagier},
 Proposition \ref{ThmMoralesVassilieva} can be reformulated
in purely combinatorial terms (without symmetric functions) using colored maps.

By definition here, a bipartite unicellular map is \emph{colored} 
by associating to each white (resp. black)
vertex a color in $\llbracket1, \ell_w\rrbracket$ (resp. $\llbracket1,
\ell_b\rrbracket$),
each color between $1$ and $\ell_w$ (resp. $\ell_b$) being chosen at least once
(note: we always think of the color $r$ of a white vertex as
{\em different} from the color $r$ of a black vertex).
To a colored bipartite map with $n$ edges one can associate its 
%EF6 ai change block degree and colored degree, ai un peu reformule
\emph{colored degree distribution}, 
that is, the pair $(\II,\JJ)$ of compositions of $n$, where the 
 $k$-th part of $\II$ (resp. of $\JJ$) is the sum of the degrees of the white (resp. black)
vertices with color $k$.

We denote by $\BiC(\II,\JJ)$ the number of colored bipartite unicellular maps
of colored degree distribution $(\II,\JJ)$.
Then Proposition \ref{ThmMoralesVassilieva} is equivalent to the following statement
\cite[paragraph 2.4]{MoVa09}:

\begin{prop}\label{PropReformulationMV}
    For any compositions $\II$ and $\JJ$ of the same integer $n$ 
    which satisfy $\ell(\II)+\ell(\JJ) \leq n+1$, one has
    \[\BiC(\II,\JJ)=\frac{n (n-\ell(\II))! (n-\ell(\JJ))!}{(n+1-\ell(\II)-\ell(\JJ))!}.\]
\end{prop}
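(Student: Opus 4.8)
The plan is to follow the strategy of the proof of Proposition~\ref{prop:GS} (Goupil--Schaeffer), adapting it to the colored setting. By Theorem~\ref{theo:main} it suffices to count the colored bipartite \cdec trees of colored degree distribution $(\II,\JJ)$ and to check that their number equals $2^{n+1}\BiC(\II,\JJ)$. Indeed, such a \cdec tree is a bipartite plane tree whose cycles are monochromatic, each cycle carrying a color in such a way that the $\ell:=\ell(\II)$ white colors and the $m:=\ell(\JJ)$ black colors are all used, and so that the total tree-degree of the white (resp. black) vertices of color $k$ equals $I_k$ (resp. $J_k$). I write $\sum_k I_k=\sum_k J_k=n$ throughout.

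The crucial structural point, and the only genuine difference with Proposition~\ref{prop:GS}, is that here a single color may gather several cycles; Lemma~\ref{lem:iso_signed} is exactly the device to handle this. First I would refine the data: for each white color $k$ I choose a composition of $I_k$ into some number $N_k\geq 1$ of positive parts (there are $\binom{I_k-1}{N_k-1}$ of them), and likewise a composition of $J_k$ into $M_k\geq 1$ parts for each black color. Setting $j=\sum_k N_k$ and $i=\sum_k M_k$, I would label the white vertices by the pairs $(k,t)$ with $1\leq t\leq N_k$ and the black vertices by analogous pairs, the prescribed degrees being the parts of the chosen compositions. By the genus-$0$ formula~\eqref{eq:g0}---whose value depends only on the numbers $j,i$ of vertices, not on the degrees---there are exactly $n\,(j-1)!\,(i-1)!$ such labelled bipartite plane trees. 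From such a labelled tree I build a \cdec tree by reading, for each color $k$, the sequence $(k,1),(k,2),\dots,(k,N_k)$, prepending a single sign, and applying Lemma~\ref{lem:iso_signed} to turn this signed sequence into a C-permutation on the color-$k$ vertices. Since Lemma~\ref{lem:iso_signed} is a bijection, this assignment, together with the $2^{\ell+m}$ choices of one sign per color, is an exact bijection onto the colored \cdec trees of colored degree distribution $(\II,\JJ)$, the inverse reading off, color by color, the unique signed sequence encoding the corresponding C-permutation.

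It then remains to sum. Summing $\prod_k\binom{I_k-1}{N_k-1}$ over the $N_k$ with $\sum_k N_k=j$ gives $\binom{n-\ell}{j-\ell}$ by Vandermonde, and similarly on the black side, so the number of colored \cdec trees equals
\[
2^{\ell+m}\,n\sum_{i+j=n+1}(j-1)!\,(i-1)!\,\binom{n-\ell}{j-\ell}\binom{n-m}{i-m}.
\]
A direct simplification collapses each summand to $\frac{(n-\ell)!\,(n-m)!}{(j-\ell)!\,(n+1-m-j)!}$, and the binomial theorem turns the sum over $j$ into $\frac{2^{n+1-\ell-m}}{(n+1-\ell-m)!}(n-\ell)!\,(n-m)!$. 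Combined with the prefactor $2^{\ell+m}$, this produces exactly $2^{n+1}\frac{n(n-\ell)!(n-m)!}{(n+1-\ell-m)!}$, and dividing by $2^{n+1}$ through Theorem~\ref{theo:main} yields the announced value of $\BiC(\II,\JJ)$. The main obstacle I anticipate is the sign bookkeeping: one must resist putting a sign on each cycle and instead record a single sign per color, the remaining powers of two needed to reach $2^{n+1}$ being produced not by signs but by the Vandermonde and binomial summations over the number of vertices carried by each color.
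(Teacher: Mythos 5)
Your proof is correct and takes essentially the same route as the paper's own proof: both reduce via Theorem~\ref{theo:main} to counting colored bipartite C-decorated trees, encode the C-permutation of each color class by a labelled bipartite tree whose degrees refine $\II$ and $\JJ$ plus a single sign per color via Lemma~\ref{lem:iso_signed}, and then evaluate the same sum (the paper counts refinements of a given total length directly, where you make the per-color Vandermonde convolution explicit --- an immaterial difference). The sign bookkeeping you flag as the main obstacle is handled in the paper exactly as you handle it, with one sign per color and the remaining powers of two coming from the summation over the number of vertices per color.
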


\begin{proof}
%GC2013: modif Valentin
The proof of this proposition is very similar to the one of
 Proposition~\ref{prop:Formule_AiC}.
%EFMay ajout cas l+m=n+1 comme premier cas traite
First, in the case where $\ell(\II)+\ell(\JJ)=n+1$, 
a proof by induction similar
to the one in Proposition~\ref{prop:Formule_AiC} yields 
$$
\BiC(\II,\JJ)=n(\ell(\II)-1)!(\ell(\JJ)-1)!=n(n-\ell(\II))!(n-\ell(\JJ))!.
$$  
%    Note that, by definition, $\ell(\II)+\ell(\JJ)$ is at most the number
%    of vertices of the map, so it must be smaller or equal to $n+1$
We now turn to the general case. 
Let us consider two compositions $\HH$ and $\KK$ which refine respectively
    $\II$ and $\JJ$, and such that $\ell(\HH)+\ell(\KK)=n+1$.
    %EF6 ajoute cette definition utile dans la suite
    A part of $\HH$ (resp. of $\KK$) is said to have color $r$ if it is contained 
    in the $r$-th part of $\II$ (resp. of $\JJ$). 
    %    We denote $h_1=\ell(\HH)-\ell(\II) \geq 0$, $h_2=\ell(\KK)-\ell(\JJ)$
%    and $h=h_1+h_2$.
%    By definition, $h=n+1 - \ell(\II)-\ell(\JJ)$ depends only on $\II$ and $\JJ$.

    Consider a labelled %EF6 change bicolored en bipartite, reformule un peu
    bipartite tree $T$ whose white vertex degrees (in the order given by the labels)
    follow the composition $\HH$ and whose black vertex degrees follow
    the composition $\KK$. 
    % EF6: ici j'ai un peu change, en effet le 2p_r+1 me parait curieux...
%    As in the proof of Proposition~\ref{prop:GS},
%    the vertices of $T$ are canonically labelled by the set 
%$$ \{w_r^t; 1 \leq r \leq \ell, 1 \leq t \leq 2 p_r+1\}
%\sqcup \{b_r^t; 1 \leq r \leq m, 1 \leq t \leq 2 q_r+1\}.
%$$
%We think to the vertex $w_r^t$ and $b_r^t$ as colored with color $r$.
A black (resp. white) vertex is said to have color $r$ if 
the corresponding part of $\HH$ (resp. of $\KK$) has color $r$. 
Since the tree is labelled, 
the white (resp. black) vertices with the same color $r$ are totally ordered.
%GC2013: modi Valentin
% as $(w_r^1,w_r^2,\dots)$ (resp. $(b_r^1,b_r^2,\dots)$).
Hence if we add the data of a sign per color ($2^{\ell(I)+\ell(J)}$ choices
for all signs), using Lemma~\ref{lem:iso_signed},
we can see the vertices with the same color as endowed
with a $C$-permutation.

Putting all these $C$-permutations together, we obtain a $C$-permutation
of the vertices of the tree $T$, which has the following property:
the vertices in the same cycle always have the same color.
Applying our main bijection (Theorem~\ref{theo:main}),
we obtain a bipartite unicellular map.
The vertices of this map have a canonical coloration,
as each vertex corresponds to a cycle of the $C$-permutation.
By construction, this colored map has colored degree distribution $(\II,\JJ)$. 

%EF6: remplace "Conversely" par "To sum up" en effet je n'ai pas
% l'impression qu'on definit ici une construction inverse mais plutot qu'on
% recapitule ce qui vient d'etre explique
To sum up, by Theorem~\ref{theo:main} and the construction above,  
 each colored bipartite unicellular map with colored 
degree distribution $(\II,\JJ)$ can be obtained in $2^{n+1}$
different ways from
\begin{itemize}
    \item a labelled bipartite tree $T$ of white (resp. black) vertex
     degree given by $\HH$ (resp. $\KK$) 
     for {\em some} refinements $\HH$ and $\KK$  with
$\ell(\HH)+\ell(\KK)=n+1$;
    \item the assignment of a sign to each color.
\end{itemize}
The number of possible signs is always $2^{\ell(I)+\ell(J)}$,
so this yields a constant factor.
For given compositions $\HH$ and $\KK$, the number of corresponding 
trees is
\[n (\ell(\HH) -1)! (\ell(\KK) -1)! \]
Thus we have to count the number of refinements $\HH$ (resp. $\KK$) of 
$\II$ (resp. $\JJ$) with a given value $\ell$ of $\ell(\HH)$ 
(resp. $m$ of $\ell(\KK)$).
It is easily seen to be equal to
\[\binom{n-\ell(\II)}{\ell-\ell(\II)} \text{ (resp. } 
\binom{n-\ell(\JJ)}{m-\ell(\JJ)} \text{ ).}\]
Finally, by Theorem~\ref{theo:main}, we get:
\[
    2^{n+1} \BiC(\II,\JJ)=2^{\ell(\II)+\ell(\JJ)} \sum_{\ell+m=n+1 \atop \ell \geq \ell(\II),
    \ m \geq \ell(\JJ)} 
    n  (\ell-1)! (m-1)!  \binom{n-\ell(\II)}{\ell-\ell(\II)}
     \binom{n-\ell(\JJ)}{m-\ell(\JJ)}.
\]
Denoting $h=n+1-\ell(\II)-\ell(\JJ)$ and setting $h_1=\ell - \ell(\II), 
h_2=m-\ell(\JJ)$
in the summation index, 
%GC7: style amsart oblige, toutes les équations ci-dessus dépassent de la
%ligne...
% Je réécris donc un peu:
%the equation above writes as
the right-hand side of the previous equation writes as:
\[\hspace{-.5cm} 2^{n+1} \BiC(\II,\JJ)
    = 
2^{\ell(I)+\ell(J)}\!\! \sum_{h_1 +h_2 = h}\!\!
    n  (\ell(\II)+ h_1-1)! (\ell(\JJ)+h_2-1)!  \binom{n-\ell(\II)}{h_1}
     \binom{n-\ell(\JJ)}{h_2}.\]
But the relation $\ell(\II)+\ell(\JJ)+h_1+h_2=n+1$ implies that
%GC7: Les deux équations suivantes ne me paraissant pas assez cruciales pour
%être mises sur deux lignes (et vu que sur une seule lignes elle dépassaient le
%style amsart, je les ai simplement mises en mode texte:
% (i.e. passé de $$ à $ )
$(\ell(\JJ)+h_2-1)!  \binom{n-\ell(\II)}{h_1} = 
\frac{(n-\ell(\II))!}{h_1!}$
and
$
(\ell(\II)+ h_1-1)!  \binom{n-\ell(\JJ)}{h_2}=
\frac{(n-\ell(\JJ))!}{h_2!}.$
Plugging this in the 
%GC7: equation 
expression
above, we get
\begin{multline*}
    2^{n+1} \BiC(\II,\JJ)=2^{\ell(I)+\ell(J)} \cdot 
n \cdot (n-\ell(\II))! \cdot (n-\ell(\JJ))!
\sum_{h_1 + h_2 = h} \frac{1}{h_1!h_2!} \\
=2^{\ell(I)+\ell(J)} \cdot 
n \cdot (n-\ell(\II))! \cdot (n-\ell(\JJ))! \frac{2^h}{h!}.
\end{multline*}
The powers of $2$ cancel each other and we get the desired result.
\end{proof}

%GC7: ajouté cette sous-section:
\subsection{Covered maps, shuffles, and an identity
of~\cite{BeCha:covered}.}
\emph{Covered maps} were introduced in~\cite{BeCha:covered} as an
extension of the notion of tree-rooted map (map equipped with a spanning tree).
A covered map of genus $g$ is a rooted map $M$ of genus $g$, not necessarily
unicellular, equipped
with a distinguished connected subgraph $S$ (with the same vertex set as $M$) having the following
property:
\begin{quote}\it
viewed as a map, $S$ is a unicellular map, possibly of a
different genus than $M$.
\end{quote}
Here, in order to view $S$ ``as a map'', we equip it with the
map structure induced by $M$: the clockwise ordering of half-edges of $S$
around each vertex is defined as the restriction of the clockwise ordering
in $M$ (see~\cite{BeCha:covered}
for details). The genus $g_1$ of $S$ is an element of $\llbracket 0,
g\rrbracket$. For example, $g_1=0$ if and only if $S$ is a spanning tree of
$M$. In general, we say that the covered map $(M,S)$ has \emph{type} $(g,g_1)$.
%We now review some elementary properties of covered maps
%from~\cite{BeCha:covered}. Given a covered map $(M,S)$, the \emph{dual} covered map is defined as
%$(M^*,S^*)$, where $M^*$ is the dual map of $M$, and $S^*$ is the set of edges
%of $M^*$ which are not the dual of an edge of $S$. This construction
%generalizes the classical planar construction of \emph{dual spanning tree} to any
%orientable surface. If $M$, $S$, and $S^*$ have respective genus $g$, $g'$, and
%$g''$ then $g'+g''=g$. 
%Conversely, given two unicellular maps $S$ and $S'$, one can always construct a
%covered map $(M,S)$ of dual map $(M^*,S')$. Moreover, if $S$ and $S'$ have respectively
%$n_1$ and $n_2$ edges, an elementary result proved in~\cite{BeCha:covered}
%asserts that there are exactly
%${2(n_1+n_2)\choose n_1}$ ways to do so. 

Covered maps have an interesting duality property that generalizes the existence
of dual spanning trees in the planar case: namely, each covered map $(M,S)$ of
type $(g,g_1)$ has a dual covered map $(M^*, S')$ of type $(g,g_2)$ with
$g_1+g_2=g$. 
%By considering simultaneously the submap $S$ and its dual submap
%$S'$, and 
By extending ideas of Mullin~\cite{Mullin}, it is not difficult to describe the
covered map $M$ as a ``shuffle'' of the two unicellular maps $S$ and $S'$,
see~\cite{BeCha:covered}. It follows that the number $\mathrm{Cov}_{g_1,g_2}(n)$ of
covered maps of type $(g_1+g_2,g_1)$ with $n$
edges can be expressed as the following shuffle-sum \cite[eq. (6)]{BeCha:covered}:
 \begin{align}\label{eq:covered-first}
   \mathrm{Cov}_{g_1,g_2}(n) = \sum_{n_1+n_2=n} {2n\choose 2n_1}
   \epsilon_{g_1}(n_1) \epsilon_{g_2}(n_2).
 \end{align}
In the case $g_1=g_2=0$, this sum simplifies thanks to the Chu-Vandermonde identity,
and we have the remarkable result due to Mullin \cite{Mullin} (see \cite{Bernardi-TR} for a bijective
proof):
 \begin{align}\label{eq:mullin}
   \mathrm{Cov}_{0,0}(n) = % \sum_{n_1+n_2=n} {2n\choose 2n_1}
   %\mathrm{Cat}(n_1) \mathrm{Cat}(n_2) =
   \mathrm{Cat}(n)\mathrm{Cat}(n+1).
 \end{align}
 The main enumerative result of the paper~\cite{BeCha:covered}
 is a generalisation of \eqref{eq:mullin} to any genus, obtained via a
 difficult bijection:
 \begin{prop}[Bernardi and Chapuy, \cite{BeCha:covered}]\label{thm:covered}
   For all $n\geq1$ and $g\geq 0$, the number $\mathrm{Cov}_g(n)=\sum\limits_{g_1+g_2=g}\mathrm{Cov}_{g_1,g_2}(n)$ of covered maps of genus $g$ with
   $n$ edges is equal to:
 \begin{align*}
   \mathrm{Cov}_g(n) = \mathrm{Cat}(n)\mathrm{Bip}_g(n+1),
 \end{align*}
 where $\mathrm{Bip}_g(n+1)$ is the number of 
%EFMay enleve rooted, sinon on va croire que c'est unrooted si non precise 
bipartite unicellular maps
 of genus $g$ with $n+1$ edges. Equivalently, the following identity holds:
 \begin{align}\label{eq:covered-id}
   \sum_{g_1+g_2=g} \sum_{n_1+n_2=n} {2n\choose 2n_1}
   \epsilon_{g_1}(n_1) \epsilon_{g_2}(n_2)
   = \mathrm{Cat}(n)\mathrm{Bip}_g(n+1).
 \end{align}
 \end{prop}
\begin{proof}
 We denote as before by $c_g(m)$ the number of C-permutations of genus $g$ of a
 set of $m$ elements. By our main result, Theorem~\ref{theo:main}, the left-hand side of
 \eqref{eq:covered-id} can be rewritten as:
 $$  2^{-n-2}\sum_{g_1+g_2=g} \sum_{n_1+n_2=n} {2n\choose 2n_1}
 c_{g_1}(n_1+1) c_{g_2}(n_2+1) \mathrm{Cat}(n_1) \mathrm{Cat}(n_2).
  $$
  We now observe that:
  $${2n\choose 2n_1}\mathrm{Cat}(n_1)\mathrm{Cat}(n_2) =
  \mathrm{Cat}(n)\mathrm{Nar}(n_1+1,n_2+1;n+1)$$
  where as before the Narayana number $\mathrm{Nar}(i,j;n)$ is
  the number of bipartite plane trees with $n$ edges, $i$ black vertices and $j$
  white vertices (this last equality follows directly from the explicit
  expressions of Catalan and Narayana numbers; an interpretation in terms of
  planar tree-rooted maps is given by 
  the bijection of \cite{Bernardi-TR}). 
%  $$\mathrm{Nar}(i,j;n)=\frac1{n}\binom{n}{i}\binom{n}{j}$$
 Therefore we have:
 $$\mathrm{Cov}_g(n)= 2^{-n-2} \mathrm{Cat}(n) \hspace{-2mm}\sum_{g_1+g_2=g} 
 \sum_{n_1+n_2=n} \hspace{-2mm}
    c_{g_1}(n_1+1)c_{g_2}(n_2+1) \mathrm{Nar}(n_1+1,n_2+1;n+1).$$
 Now, the double-sum in this equation %quantity
% $$
%\sum_{g_1+g_2=g} 
%   \sum_{n_1+n_2=n} 
%    c_{g_1}(n_1+1)c_{g_2}(n_2+1) \mathrm{Nar}(n_1+1,n_2+1;n+1)
%$$
 is equal to the number of  bipartite C-decorated trees (that is, bipartite trees equipped with a
 C-permutation of the vertices that stabilizes each color class) with $n+1$
 edges and genus $g$: indeed in the double-sum, the quantities $g_1$ and $n_1+1$
 can be
 interpreted respectively as the genus of the restriction of the
 $C$-permutation to black vertices of the tree, and as the number of black
 vertices in the tree. By our main result, Theorem~\ref{theo:main}, this
 double-sum is therefore equal to $2^{n+2} \mathrm{Bip}_g(n+1),$
% Summing this quantity $\mathrm{Bip}_g(n_1+1-2,n_2+1)$ over $n_1+n_2=n$, i.e.,
% summing over the possible numbers of
% black and white vertices in a bipartite plane map of genus $g$, we obtain the right-hand
% side of 
 which proves \eqref{eq:covered-id}.
 \end{proof}
 \noindent The proof above and \eqref{eq:covered-first} also show the following fact. Let $G_1$ be the
 genus of the submap $S$ in
 a covered map $(M,S)$ of genus $g$ with $n$ edges chosen uniformly at random,
 and let $G_{\circ}$ be the genus of the restriction to white vertices of the
 $C$-permutation in a bipartite $C$-decorated tree of genus $g$ with $n+1$
 edges chosen
 uniformly at random. Then the random variables
 $G_1$ and $G_{\circ}$ have the same distribution.

% Now it is easy to guess that,
 It is possible to prove that,
 when $g$ is fixed and $n$ tends to infinity, the variable 
 $G_{\circ}$ is close to a  binomial random variable $B(g,1/2)$: the idea behind this property is
 that a random bipartite tree with $n+1$ edges has about $n/2+O(\sqrt{n})$ vertices
 of each color with high probability, and that with high probability the $C$-permutation of its
 vertices is made of $g$ cycles of length $3$, that independently ``fall''
 into each of the two color classes with probability $1/2$. Giving a proper proof of these
 elementary statements would lead us too far from our main subject, so we
 leave to the reader the details of a proof along these lines of the following fact, which
 was proved in \cite{BeCha:covered}  with no combinatorial interpretation:
 \begin{prop}[\cite{BeCha:covered}]
   Let $g\geq g_1\geq0$. When $n$ tends to infinity, the probability that a
   covered map of genus $g$ with $n$ edges chosen uniformly at random has type
   $(g,g_1)$ tends to $\displaystyle2^{-g} {g \choose g_1}$.
 \end{prop}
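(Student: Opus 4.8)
The plan is to reduce the statement to an asymptotic count of bipartite \cdec trees, exactly along the lines sketched above. By the equidistribution noted just before the statement, the genus $G_1$ of the submap $S$ in a uniform random covered map of genus $g$ with $n$ edges has the same law as the genus $G_{\circ}$ of the restriction to white vertices of the $C$-permutation in a uniform random bipartite \cdec tree of genus $g$ with $n+1$ edges. So it suffices to prove $\Pr(G_{\circ}=g_1)\to 2^{-g}\binom{g}{g_1}$. Using the description from the proof of Proposition~\ref{thm:covered}, such a tree with $i$ black and $j=n+2-i$ white vertices and black/white genus split $(a,b)$, $a+b=g$, is counted by $\Nar(i,j;n+1)\,c_a(i)\,c_b(j)$. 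Writing
\[
S_b:=\sum_{i+j=n+2}\Nar(i,j;n+1)\,c_{g-b}(i)\,c_b(j),
\]
we have $\Pr(G_{\circ}=b)=S_b\big/\sum_{b'=0}^{g}S_{b'}$, so the whole problem reduces to showing that $S_b$ is asymptotically proportional to $\binom{g}{b}$.

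Next I would pin down the asymptotics of $c_g(m)$ for fixed $g$ as $m\to\infty$. From the explicit count recalled in Subsection~\ref{SubsectImmediateCorol}, every $C$-permutation of genus $g$ on $m$ elements has exactly $m-2g$ cycles (hence a uniform sign factor $2^{m-2g}$), and a cycle type indexed by a partition $\gamma\vdash g$ with $\ell=\ell(\gamma)$ parts uses $2g+\ell$ non-fixed points, contributing $(m)_{2g+\ell}\sim m^{2g+\ell}$ permutations. The exponent $2g+\ell$ is uniquely maximized by $\gamma=1^{g}$ (all cycles of length $3$, $\ell=g$), so
\[
c_g(m)=2^{m-2g}\,\frac{m^{3g}}{g!\,3^{g}}\,(1+o(1)).
\]
This is the combinatorial heart of the statement: asymptotically almost every genus-$g$ $C$-permutation consists of exactly $g$ three-cycles plus fixed points. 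Substituting this estimate into $S_b$ and using $2^{i-2(g-b)}2^{j-2b}=2^{n+2-2g}$ gives
\[
S_b\sim\frac{2^{n+2-2g}}{3^{g}\,(g-b)!\,b!}\sum_{i+j=n+2}\Nar(i,j;n+1)\,i^{3(g-b)}\,j^{3b}.
\]

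Finally I would show that the remaining sum is, to leading order, independent of $b$. Since $\sum_i\Nar(i,n+2-i;n+1)=\Cat(n+1)$ and $\Nar(i,j;n+1)=\tfrac1{n+1}\binom{n+1}{i}\binom{n+1}{j}$ is a product of binomials, the number $i$ of black vertices of a uniform bipartite plane tree concentrates around $(n+2)/2$ with fluctuations of order $\sqrt n$. Hence for fixed exponents $p,q$ one has $\sum_i\Nar(i,n+2-i;n+1)\,i^{p}(n+2-i)^{q}\sim (n/2)^{p+q}\,\Cat(n+1)$. Applying this with $p=3(g-b)$, $q=3b$ (so $p+q=3g$) yields $S_b\sim \frac{2^{n+2-2g}}{3^{g}\,g!}(n/2)^{3g}\Cat(n+1)\cdot\binom{g}{b}$, whose only $b$-dependence is $\binom{g}{b}$. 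Summing over $b$ normalizes by $2^{g}$, giving $\Pr(G_{\circ}=b)\to 2^{-g}\binom{g}{b}$ and hence the claim.

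The main obstacle is the last step: one must combine the $c_g$ asymptotics with the concentration of the color balance of a random bipartite tree, checking in particular that the polynomial tilt $i^{3(g-b)}j^{3b}$ does not push the bulk away from $i\approx j\approx n/2$ (it shifts the mean by only $O(1)$) and that the ranges where $i$ or $j$ is $o(n)$ contribute negligibly, since there $\Nar$ is exponentially smaller than $\Cat(n+1)$. These are elementary binomial/Gaussian concentration estimates, but they are where all the care is needed; the rest is bookkeeping with the explicit formula for $c_g(m)$.
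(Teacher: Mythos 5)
Your proposal is correct and follows exactly the route the paper sketches (and leaves to the reader): the equidistribution of $G_1$ and $G_{\circ}$, the fact that asymptotically almost every genus-$g$ $C$-permutation consists of $g$ three-cycles plus fixed points (your estimate $c_g(m)\sim 2^{m-2g}m^{3g}/(g!\,3^g)$), and the $n/2+O(\sqrt n)$ concentration of the color balance in a random bipartite plane tree. Your write-up simply supplies the bookkeeping the paper omits, so it matches the intended proof.
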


% We now give the idea behind this proposition, without giving all the details
% regarding the asymptotics, which are out of the scope of the present paper.
% \begin{proof}[sketch] Let $T$ be a bipartite C-decorated tree of genus $g$ with
%   $n+1$ edges chosen uniformly at random (by \emph{bipartite}, we mean that
%   the C-permutation of its vertices stabilizes the canonical bicoloration),
%   and let $C_{\circ}$ be the restriction of the associated $C$-permutation to 
%   the white vertices of $T$. Then the proof of Theorem~\ref{thm:covered} shows that the genus
%   of $C_{\circ}$ has the same distribution as the genus of the submap $S$ in a random
%   covered map $(M,S)$ of genus $g$ and size $n$.
%
%  Now, we use two facts that we give without proof. First, a random
%  $C$-permutation of genus $g$ on a set of $n$ elements has $g$ cycles of
%  and $3$ and all other cycles of length $1$ with probability
%  $1-O(\frac{1}{n})$ when $n$ tends to infinity. Secondly, the
%  proportion of white vertices in a uniform
%  random bipartite plane tree with $n$ edges converges in law to $1/2$
%  when $n$ tends to infinity. From this two facts one easily deduces that with
%  probability $1-o(1)$, the $C$-permutation of the tree $T$ has $g$ cycles of length $3$
%
%
% \end{proof}

 To conclude this section, we mention that, in~\cite{BeCha:covered},  refined
 results were given that take more parameters
 into account ({\it e.g.}, the number of vertices and faces of the covered map).
 These extensions can be proved exactly in the same way as
 Proposition~\ref{thm:covered}, but we do not state them explicitly here, for the
 sake of brevity.

%GC2: title has to fit on one line, so changed ``application'' to ``link''
% not very satisfied with that
%GC3: ai tenté d'uniformiser ``Stanley character polynomials'' partout, suite à
%remarque de Valentin
%EF6 je me demande si ce ne serait pas plutot Stanley's character polynomials, comme vous voulez
\section{Computing Stanley character polynomials}
\label{SectStanley}
%VF5: j'ai repris un peu de place dans ce paragraphe pour donner un peu plus de
%détails et mettre les formules sur des lignes à part.
\subsection{Formulation of the problem}
% GC2: pourrait-on trouver une première phrase un peu plus motivante ?
% . . .  j'ai pas trouvé!
We now consider the following enumerative problem. 
For $n$ a fixed integer, 
we would like to compute the generating series
\[F_n(p_1,p_2,\cdots;q_1,q_2,\cdots)= \sum_{(M,\phi)} \wt(M,\phi)\]
of pairs $(M,\phi)$ where
% GC2: removed itemize for space reasons.
% EF ajout de with n edges, est-ce correct ?
$M$ is a rooted bipartite unicellular map with $n$ edges, and $\phi$ is a mapping from
the vertex set $V_M$ of $M$ to positive integers, satisfying
the following {\em order condition}:
\begin{quote}\it
for each edge $e$ of $M$, one has $ \phi(b_e) \geq \phi(w_e), $ where $b_e$ and
$w_e$ are respectively the black and white extremities of $e$. 
    \end{quote}
% GC2: rephrased definition of weight:
The weight of such 
a pair is
$\wt(M,\phi):=\prod_{v \in V_M^\circ} p_{\phi(v)}
\prod_{v\in V_M^\bullet} q_{\phi(v)},$ 
where $V_M^\bullet$ and $V_M^\circ$ are respectively the sets
of black (resp. white) vertices of~$M$.
%In other terms, each black (resp. white) vertex of image $i$ by $\phi$
%corresponds to a factor $q_i$ (resp. $p_i$) in the weight of $(M,\phi)$.

%The weight of such a couple is
%$\wt(M,\phi)=\prod_{i \geq 1} p_i^{|V_M^\circ \cap \phi^{-1}(i)|}
%\prod_{i \geq 1} q_i^{|V_M^\bullet \cap \phi^{-1}(i)|}, $\\
%where $V_M^\bullet$ and $V_M^\circ$ are respectively the sets
%of black (resp. white) vertices of $M$.
%In other terms, each black (resp. white) vertex of image $i$ by $\phi$
%corresponds to a factor $q_i$ (resp. $p_i$) in the weight of $(M,\phi)$.

Our motivation comes from representation theory of the symmetric group.
This topic is linked to map enumeration by the following formula 
conjectured in \cite{St} and proved in
 \cite{Fe}.
Let $\pp=p_1,\cdots,p_r$ and $\qq=q_1,\cdots,q_r$ be two finite lists of positive integers
of the same length.
Then the evaluation
 of the generating series considered above is equal to 
 \begin{equation}\label{EqStanleyFormula}
     F_n(p_1,\cdots,p_r,0,\cdots;q_1,\cdots,q_r,0,\cdots)
   = L(L-1)\cdots(L-n+1) \hat{\chi}^{\lambda}( (1\ 2\ \cdots\ n) ),
\end{equation}
where:
\begin{enumerate}%[topsep=0pt, partopsep=0pt, itemsep=0pt, parsep=0pt]
    \item[$\bullet$] $\lambda$ is the partition with $p_1$ parts equal to $q_1+\cdots+q_r$,
    $p_2$ parts equal to $q_2+\cdots+q_r$, and so on\dots
%       \[ \left( \underbrace{q_1+\cdots+q_r,\cdots,q_1+\cdots+q_r}_{p_1\text{ times}},
%      \underbrace{q_2+\cdots+q_r,\cdots,q_2+\cdots+q_r}_{p_2\text{ times}},\cdots\right); \]
     \item[$\bullet$] $L=\sum_{1 \leq i \leq j \leq r} p_i q_j$ is the number
of boxes of $\lambda$;
% EF ai change S_n pour S_L, est-ce correct ?
     \item[$\bullet$] $\hat{\chi}^{\lambda}$ is the normalized character of
      the irreducible representation of $S_L$ associated to $\lambda$;
     \item[$\bullet$] $(1\ 2\ \cdots\ n)$ is an $n$-th cycle seen as a permutation of $S_L$
         (if $n >L$, it is not defined but, as the numerical factor is $0$, it is not
         a problem).
\end{enumerate}

\begin{remark}
    In \cite{St,Fe}, this formula is stated under a slightly different form.
    We call $G_n$ the same generating series as $F_n$ except that the order
    condition is replaced by the following {\em maximum condition}:
    \begin{quote}\it
        for each black vertex $b$, one has $\phi(b)= \max \phi(w)$, where the
        maximum is taken over all white neighbours $w$ of $b$.
    \end{quote}
    Then the main theorem of \cite{Fe} states that
    \[G_n(p'_1,\cdots,p'_r,0,\cdots;q'_1,\cdots,q'_r,0,\cdots)
    =L(L-1)\cdots(L-n+1) \hat{\chi}^{\lambda}( (1\ 2\ \cdots\ n) ),\]
    where everything is defined as above except that
    \begin{quote}\it
        $\lambda$ is the partition with $p'_1$ parts equal to $q'_1$,
            $p'_2$ parts equal to $q'_2$, and so on\dots
    \end{quote}
    This result is clearly equivalent to \eqref{EqStanleyFormula} by setting:
    \[\forall i \ge 1, \begin{cases}
        p_i=p'_i \\
        q_i=q'_i-q'_{i+1}
    \end{cases}.\]
\end{remark}

\subsection{A new expression for $F_n$}
Our main bijection allows us to express the generating series $F_n$ in terms
of the corresponding generating series for plane trees:
\[R_{n+1} (\pp,\qq) = \sum_{(T,\phi)} \wt(T,\phi),\]
where the sum runs over all pairs $(T,\phi)$, $T$ being a plane tree 
%EFMay ajout with n edges
with $n$ edges 
and $\phi$ a function $V_T \to \NN$ satisfying the order condition.

The strange notation $R_{n+1}$ comes from the following fact:
A. Rattan has proved \cite{Ra07} that this generating series is
the $n+1$-th free cumulant $R_{n+1}$
of the transition measure of the Young diagram $\lambda$
($\lambda$ stands here for the Young diagram defined in terms on $\pp$ and $\qq$
in the previous paragraph).
%$R_{n+1}$ can be seen as a EF: remplace series par polynomial, est-ce correct ? 
% GC3: remis series
%series in $\pp$ and $\qq$).
Free cumulants have become in the last few years an important tool in (asymptotic)
representation theory of the symmetric groups, see for example the work of P. Biane
\cite{Biane}.

% EF donne l'expression explicite, legeres modifications
%EFMay ajout footnote
Let us define an operator $D$ by\footnote{The second equality is obtained
by similar arguments as in the proof of Proposition~\ref{prop:hazaA1}.}:  
%GC3 added := instead of =
\[ D(x^k) := \sum_{g \geq 0} c_g(k) x^{k-2g}=k!\sum_{r=1}^k2^r\binom{k-1}{r-1}\binom{x}{r}, \] 
% \text{ for }x=p_1,p_2,\cdots,q_1,q_2,\cdots,$
$D$ being extended multiplicatively to monomials in distinct variables, 
and then extended linearly to multivariate polynomials
%GC3 added
and series 
(in
particular,
%GC3 changed polynomials
series 
in the variables $\pp$ and $\qq$).
% GC2: added a remark on computation and explicitness
%EF du coup j'enleve ca Recall that $c_g(k)$ is an explicit quantity, so the action of
% $D$ is totally explicit and easily implemented.%GC2: changed proposition to theorem
\begin{theo}\label{thm:stanley}
    For any $n \geq 1$, 
%GC3 added    
    one has 
     $2^{n+1} F_n = D(R_{n+1}). $
\end{theo}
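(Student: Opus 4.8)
The plan is to realize both $2^{n+1}F_n$ and $D(R_{n+1})$ as the generating series of a single family of objects: bipartite plane trees equipped with a monochromatic C-permutation whose cycles carry integer values subject to the order condition. The link to the left-hand side is the bipartite form of our main bijection (Theorem~\ref{theo:main}) already used above: a bipartite unicellular map with $n$ edges corresponds, up to the factor $2^{n+1}$ and summing over all genera, to a bipartite plane tree $T$ together with a C-permutation $\sigma$ of its $n+1$ vertices all of whose cycles are monochromatic, the cycles of $\sigma$ being exactly the vertices of the map. As this correspondence preserves the underlying vertex-graph, a vertex-function $\phi$ on the map is nothing but a function on $V_T$ that is constant on each cycle of $\sigma$, and since the edge sets of $T$ and of the map coincide, the order condition on the map is read verbatim as the order condition on $T$. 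Hence
$$
2^{n+1}F_n \;=\; \sum_{(T,\sigma,\phi)} \ \prod_{\text{white cycles } c} p_{\phi(c)} \ \prod_{\text{black cycles } c} q_{\phi(c)},
$$
the sum ranging over bipartite plane trees $T$ with $n$ edges, monochromatic C-permutations $\sigma$ of $V_T$, and functions $\phi\colon V_T\to\NN$ that are constant on the cycles of $\sigma$ and satisfy the order condition (here $\phi(c)$ denotes the common value of $\phi$ on a cycle $c$).

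It remains to identify this with $D(R_{n+1})$. Fix a colored plane tree $(T,\phi)$ contributing to $R_{n+1}=\sum_{(T,\phi)}\wt(T,\phi)$; here $\phi\colon V_T\to\NN$ is arbitrary subject only to the order condition, and $\wt(T,\phi)=\prod_i p_i^{a_i}\prod_j q_j^{b_j}$, where $a_i$ (resp.\ $b_j$) is the number of white (resp.\ black) vertices of $T$ of value $i$ (resp.\ $j$). Because the $p_i$ and $q_j$ are pairwise distinct variables, $D$ acts factor by factor, with $D(p_i^{a_i})=\sum_{g}c_g(a_i)\,p_i^{a_i-2g}$. Since a C-permutation of genus $g$ on $a_i$ elements has precisely $a_i-2g$ cycles, the coefficient $c_g(a_i)$ counts the choices of a C-permutation of the set of white vertices of value $i$, and the new exponent $a_i-2g$ of $p_i$ records its number of cycles. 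Performing this simultaneously over all value-color classes, and using that a monochromatic C-permutation of $V_T$ which additionally stabilizes the value classes is the same datum as an independent such choice on each class, we obtain
$$
D\big(\wt(T,\phi)\big) \;=\; \sum_{\sigma} \ \prod_{\text{white cycles } c} p_{\phi(c)} \ \prod_{\text{black cycles } c} q_{\phi(c)},
$$
the sum running over monochromatic C-permutations $\sigma$ of $V_T$ that are constant on cycles for $\phi$. Summing over all $(T,\phi)$ satisfying the order condition, the right-hand side becomes exactly the generating series displayed in the previous paragraph, and therefore $2^{n+1}F_n=D(R_{n+1})$.

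The only genuinely delicate point is this middle step, where one must verify that the purely algebraic, variable-by-variable action of $D$ reproduces the combinatorial operation of inserting a monochromatic, value-preserving C-permutation on $V_T$. Two features have to match: that all vertices sharing a common color and a common $\phi$-value, and only those, are grouped into a single set carrying one C-permutation; and that after applying $D$ the exponent of each variable equals the number of cycles, that is, the number of vertices of the resulting map of the corresponding color and value. Both are consequences of the fact that a genus-$g$ C-permutation on $k$ elements has exactly $k-2g$ cycles, together with the multiplicativity of $D$ over distinct variables --- which is precisely where the definition of $D$ has been engineered to fit the bijection. Everything else is immediate from Theorem~\ref{theo:main}: the preservation of the underlying graph turns the cycles of $\sigma$ into the vertices of the map, and the fact that $\phi$ is forced constant on each cycle makes the order condition on the tree and on the map one and the same system of inequalities.
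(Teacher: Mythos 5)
Your proof is correct and follows essentially the same route as the paper's: apply the bipartite form of Theorem~\ref{theo:main} to rewrite $2^{n+1}F_n$ as a sum over bipartite plane trees with a monochromatic, value-class-preserving C-permutation, then observe that the variable-by-variable action of $D$ on $\wt(T,\phi)$ (via $c_g(k)$ counting C-permutations with $k-2g$ cycles) generates exactly these independent choices on each color/value class. The only difference is that you spell out in detail the middle step which the paper dispatches with ``the result follows directly.''
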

\begin{proof}
    A pair $(M,\phi)$ as above corresponds by the bijection of Theorem~\ref{theo:main} 
    to a bipartite $C$-decorated tree $T$, together with a function $\phi : V_T \to \NN$
    which fulfills the order condition and such that all vertices in a given cycle
    have the same image by $\phi$.
    %VF10: ajoute un brin d'explications
    Equivalently, we choose the tree $T$, a function $\phi : V_T \to \NN$ and then,
    for each $i \ge 1$ a $C$-permutation of 
    the white (resp. black) vertices of %EFMay plutot value que color $i$. 
value $i$. 
    The result follows directly.
\end{proof}

%GC3 added, Valentin peux-tu vérifier que ça fait sens ?
%VF oui
The free cumulant $R_{n+1}$ is the compositional
inverse of an explicit %formal power 
series \cite{Ra07}. Hence
Theorem~\ref{thm:stanley} gives an efficient,
easily implemented way of
computing %the expansion of 
Stanley character polynomials $F_n$.

\section{Counting 3-constellations}
\label{SectConstellations}
%The refined enumeration formula for bipartite maps due to A. Goupil and 
%G. Schaeffer (Proposition \ref{prop:GS}) has been further extended to
%the refined enumeration of {\em constellations} (see definition below)
%by D. Poulalhon and G. Schaeffer \cite{PoulalhonSchaefferConstellations}.
%The method in this article involves both algebra and combinatorics.
%
%Unfortunately, we have not been able to give a purely combinatorial proof
%of this fact using the construction of this article.
%We nevertheless present two results for $3$-constellations.
%
%Tout ceci est redit plus tard.. 
 
\subsection{Constellations and the Poulalhon-Schaeffer formula}
% GC8: j'ai réécrit tout cela, en prenant moins de risques sur l'aspect historique
% en plus je cite Cori-Machi, ça fera plaisir aux puristes!
%....
%Constellations have been introduced to encode transitive factorizations
%of a permutation into $m$ factors
%\alert{Est-ce que c'est vrai?}.
%In this sense, they generalize in a natural way bipartite maps,
%which correspond to the case $m=2$.
Constellations are a family of colored maps, depending on an integer parameter
$m\geq 2$, that are in bijection with
factorizations of a permutation into $m$ factors.
We refer to~\cite[Chapter~1]{ZvLa97} for a general discussion on constellations, and in
particular to Section 1.6.2 of this book for the correspondence between the factorization
viewpoint and the map-theoretic perspective that we adopt here
(see also \cite[Section~2]{BernardiMoralesFacto}, or the introduction
of~\cite{MBM-Schaeffer}).
For $m=2$, constellations are in bijection with bipartite maps, which are well-known
to be in bijection with factorizations of permutations into $2$~factors~\cite{CoriMachi}. 

\begin{defin}\label{def:constellation}
    An $m$-constellation is a map with circle and square vertices such that:
    \begin{enumerate}[label=(\roman*)]
        \item the circle vertices are colored with $m$ colors $1,2,\dots,m$;
        \item all edges have one circle and one square extremity;
        \item each square vertex is linked to exactly one circle vertex of each
            color;
        \item \label{ItemCyclic}
            moreover, the circle vertices around each square vertex appear
% GC8: ajouté counterclockwise (sinon en faisant le dessin j'étais pas sur de tourner
% dans le bon sens...)
counterclockwise
            in the cyclic order $1,2,\dots,m$.
    \end{enumerate}
    A constellation is {\em rooted} if we distinguish a corner of a circle
    vertex of color $1$.
    Unless mentioned explicitly, all constellations considered will be rooted.
 
    The size of an $m$-constellation is its number of square vertices.
\end{defin}

%\alert{Faire un dessin\dots}
%GC8: voici un dessin...
\begin{figure}[h!!!!!!]
\includegraphics[scale=0.9]{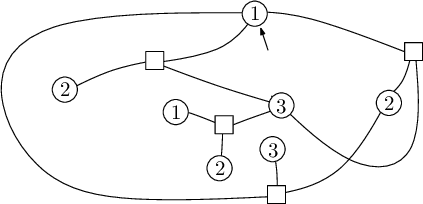}
  \caption{A rooted $3$-constellation of size $4$ (the root corner is pointed
with an arrow). This $3$-constellation has genus $1$ and is unicellular.}
\label{fig:3-constellation}
\end{figure}

%GC8: pour les références il me semble que Lando-Zvonkine est très détaillé et
%contient tout (dessin hypercartes vs. dessin cartes biparties, correspondance
%avec les factorisations, etc.) Peut-être faut-il aussi citer MBM-Schaeffer?
%\alert{Je manque de références. On devrait aussi signaler qu'il traine
%plusieurs définitions équivalentes dans la littérature.
%Est-ce que vous connaissez un papier où c'est expliqué?
%Et le lien avec les factorisations?}
%GC8 added
\noindent Note that several papers, {\it e.g.}~\cite{MBM-Schaeffer, BDFG:mobiles,
Chapuy:constellations, BernardiMoralesFacto}, 
use a different but equivalent definition of constellations
in terms of maps, where square vertices are replaced by ``black faces'' of
degree $m$. For
the purpose of using the bijection of Section~\ref{SubsectTrisections}, the
definition we use here will be much more convenient. 
%GC8: l'équivalence entre les deux représentations est tellement évidente qu'il
%ne me semble pas nécessaire d'en dire plus. Sinon on pourrait reciter ce qu'on
%a déjà cité plus haut comme cela:
%
%The equivalence of both is explained in \cite[Section 1.6.2]{ZvLa97} or
%\cite[Figure 2]{MBM-Schaeffer}.

Fix an $m$-constellation of size $n$.
%EF9 reecrit
The sequence of the degrees of its circle vertices of color $i$, taken in decreasing order, 
forms a partition $\lambda^{(i)}$ of size $n$.
The list $\lambda^{(1)},\dots,\lambda^{(m)}$ is called the {\em multitype}
of the constellation.

For unicellular $m$-constellations, 
the Euler formula links the
genus and the multitype 
(here, $\ell_i$ is the length of $\lambda^{(i)}$):
\[2g=n(m-1)+1-\sum_{i=1}^m \ell_i.\]

Using algebraic tools,
D. Poulalhon and G. Schaeffer have given a general formula for the number 
of unicellular $m$-constellations of size $n$
\cite[Theorem 1]{PoulalhonSchaefferConstellations}
with a given multitype. Though explicit, their formula requires quite heavy notations to be stated,
therefore we present here only the case $m=3$,
which is the only case we are able to attack with our combinatorial tools.

For a partition $\lambda$ of length $\ell$ and a non-negative integer $g$,
we denote
\begin{align*}
    a(\lambda)&= \prod m_i(\lambda)! ; \\
    S_g(\lambda)&= (\ell+2g-1)! \sum_{p_1+\dots+p_\ell=g}
    \prod_{i=1}^\ell \frac{1}{2p_i+1} \binom{\lambda_i-1}{2p_i}.
\end{align*}
\begin{theo}[Poulalhon and Schaeffer, 2002]
    Let $\lambda^{(1)}$, $\lambda^{(2)}$ and $\lambda^{(3)}$ be three partitions
    of lengths $\ell_1$, $\ell_2$ and $\ell_3$ and of the same size $n$, such that
    %%EF9 such that plutot que Suppose that 
    $$g=1/2 \cdot (2n+1-\ell_1-\ell_2-\ell_3)$$ 
    is a non-negative integer.
    Then the number $c_{\bm{\lambda}}$ of $3$-constellations of multitype 
    $(\lambda^{(1)}, \lambda^{(2)}, \lambda^{(3)})$ is given by the formula
    \begin{multline*}
        c_{\bm{\lambda}} = \frac{n^2}{2^{2g} a(\lambda^{(1)})
    a(\lambda^{(2)}) a(\lambda^{(3)})} \sum_{g_0 + g_1 +g_2 +g_3=g}
    \bigg[ (g_0!)^2 \binom{n-\ell_1-2g_1}{g_0} \\ 
    \cdot\binom{n-\ell_2-2g_2}{g_0} 
     \binom{n-\ell_3-2g_3}{g_0}
    S_{g_1}(\lambda^{(1)}) S_{g_2}(\lambda^{(2)}) S_{g_3}(\lambda^{(3)})\bigg].
    \end{multline*}
    \label{ThmPoulalhonSchaefferM3}
\end{theo}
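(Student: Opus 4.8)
The plan is to adapt the technique already used for the Goupil-Schaeffer formula (Proposition~\ref{prop:GS}) to the setting of $3$-constellations, since Theorem~\ref{ThmPoulalhonSchaefferM3} has a strikingly similar shape, with one copy of the quantity $S_{g_i}(\lambda^{(i)})$ per color and an extra mixing factor indexed by $g_0$. The first step is to understand the trisection/C-permutation machinery of Section~\ref{SectBij} well enough to apply it to constellations rather than to ordinary unicellular maps. Because an $m$-constellation is a particular kind of colored bipartite-type object (circle vertices of $m$ colors and square vertices), I would first try to encode a unicellular $3$-constellation as a plane-tree-like structure equipped with three C-permutations, one acting on the circle vertices of each color, together with whatever permutation data governs the square vertices. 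Concretely, I expect that slicing trisections respects the coloring of circle vertices, so that a unicellular $3$-constellation of genus $g$ should correspond (up to explicit powers of $2$ and labelling factors) to a \emph{planar} constellation decorated by three independent C-permutations $\sigma^{(1)},\sigma^{(2)},\sigma^{(3)}$ stabilizing the color classes, of genera $g_1,g_2,g_3$, plus some additional datum carrying the remaining genus $g_0=g-g_1-g_2-g_3$.

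The key steps, in order, are as follows. First I would establish the genus-$0$ (planar) enumeration: count labelled planar $3$-constellations with prescribed circle-vertex degree compositions refining $\lambda^{(1)},\lambda^{(2)},\lambda^{(3)}$. This should be a multivariate cyclic-lemma computation analogous to \eqref{eq:g0}, yielding a clean product formula; the factor $(g_0!)^2$ and the three binomials $\binom{n-\ell_i-2g_i}{g_0}$ strongly suggest that the planar count is \emph{not} simply a product over the three colors but involves a genuine interaction between the colors through the square vertices, which is exactly what the $g_0$-indexed mixing term should capture. Second, mimicking Proposition~\ref{prop:GS}, I would refine each $\lambda^{(i)}$ along a list $\pp^{(i)}$ with $\sum_r p^{(i)}_r=g_i$, attach to each refined block a C-permutation via Lemma~\ref{lem:iso_signed}, and count the resulting decorated planar objects, tracking the $\prod_r \frac{1}{2p_r+1}\binom{\lambda_i-1}{2p_r}$ factors that assemble into $S_{g_i}(\lambda^{(i)})$ and the $(\ell_i+2g_i-1)!$ factors coming from labelling $\ell_i+2g_i$ refined circle vertices of color $i$. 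Third, I would sum over all $(g_1,g_2,g_3)$ and over the mixing parameter $g_0$, divide by the appropriate symmetry factors $a(\lambda^{(i)})$ (to pass from labelled to unlabelled) and by the signs/relabelling redundancy (the powers of $2$, here giving $2^{-2g}$), and match the result term-by-term with the claimed formula.

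The main obstacle will be the first step together with the interpretation of the $g_0$ mixing factor: the clean C-decorated-tree bijection of Theorem~\ref{theo:main} is stated for unicellular maps, and the paper itself signals (in the introduction and at the start of this section) that ``our bijection does not apply to these objects.'' So I do not expect a direct application of Theorem~\ref{theo:main}; instead the hard part is to isolate \emph{which} feature of constellations breaks the clean bijection and to show that, at least for $m=3$ and for a suitably chosen auxiliary family (the \emph{quasi-$3$-constellations} alluded to in the introduction), the trisection decomposition of Proposition~\ref{prop:Eg_decomp} can still be run, with the three color classes each contributing an independent C-permutation while the square vertices force the additional $g_0$-indexed coupling. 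I anticipate that establishing the planar base case and correctly accounting for this coupling — rather than the subsequent summation, which should be routine binomial/exponential bookkeeping once the base case is in hand — is where essentially all the difficulty lies, and it may well require proving a slightly weaker or modified statement (a quasi-constellation analogue) rather than Theorem~\ref{ThmPoulalhonSchaefferM3} in full.
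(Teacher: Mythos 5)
There is a genuine gap here, but it is one the paper itself shares: the paper does not prove Theorem~\ref{ThmPoulalhonSchaefferM3} at all. The theorem is quoted from Poulalhon and Schaeffer, who proved it with algebraic tools (representation theory of the symmetric group), and the paper states explicitly that finding a combinatorial proof, even for $m=3$, is an open problem that the authors did not solve. Your plan, honestly hedged as it is, does not close this gap: it ends by conceding that you would likely only prove ``a slightly weaker or modified statement (a quasi-constellation analogue)''. That weaker statement is exactly what the paper proves (Proposition~\ref{PropQC}), and the reason it cannot be upgraded to the theorem is concrete: the bijection of Theorem~\ref{theo:main} preserves only the underlying graph, not the rotation system, so condition~\ref{ItemCyclic} (the counterclockwise order $1,2,3$ of circle vertices around each square vertex) is invisible on the C-decorated side. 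Dropping it gives quasi-constellations, and the resulting count $\tilde{c}_{\bm{\lambda}}$ differs from $c_{\bm{\lambda}}$ by the factor $\frac{n}{n-g_0}\,2^{n-g_0}$ \emph{inside} the sum over $g_0+g_1+g_2+g_3=g$ --- not by a global factor --- so there is no way to divide it out except in genus zero, where it collapses to $2^n$. Your proposal correctly identifies condition~\ref{ItemCyclic} as the obstruction and correctly anticipates the quasi-constellation detour, but it offers no idea for overcoming the obstruction, which is precisely where the open problem lies.

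Two smaller points. First, your guess that the planar base case already ``involves a genuine interaction between the colors through the square vertices'' is off: the planar formulas (both \eqref{Eq3ConstPlanar} and \eqref{EqQCPlanar}) are clean products over the three colors; the $g_0$-mixing enters because slicing a trisection at a degree-$3$ square vertex produces three square \emph{leaves}, so the tree-side objects are the ``prickly'' planar quasi-constellations of the paper, with $g_0$ counting the triples of leaves. Second, the paper's other partial result --- counting $3$-constellations by the numbers of circle vertices of each color --- does manage to handle condition~\ref{ItemCyclic}, but by a route different from yours: it runs Chapuy's recursive decomposition directly on constellations, uses Lemma~\ref{LemGoodTriplet} (exactly $n(n-1)^2/2$ of the $(n-1)^3$ corner triples reglue into a valid constellation) to get the induction \eqref{EqIndConst}, and then exhibits a simpler planar family satisfying the same induction. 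If one hopes to reach the actual theorem combinatorially, refining that induction to track degree distributions is the direction the paper points to, rather than a direct application of Theorem~\ref{theo:main}.
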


%EF9 reecrit
The planar case ($g=0$) of this theorem: 
\begin{equation}
    \label{Eq3ConstPlanar}
                c_{\bm{\lambda}} = \frac{n^2}{a(\lambda^{(1)})     
           a(\lambda^{(2)}) a(\lambda^{(3)})} (\ell_1-1)! (\ell_2-1)! (\ell_3-1)!, 
\end{equation}
has been proved earlier by I. Goulden and D. Jackson
\cite[Theorem 3.2]{GJ92} and can be handled in a purely combinatorial
way \cite{BernardiMoralesFacto}. 

Finding a combinatorial proof of
Theorem \ref{ThmPoulalhonSchaefferM3} for higher genus
is an open problem (and seems difficult, due to the complexity of the formula).
We did not succeed in solving this problem but
we shall present two results in this direction.

\subsection{Refined enumeration of quasi-constellations}
Our bijection preserves the underlying multi-graph of a unicellular map,
but not the cyclic order around vertices.
Therefore the last condition in the definition of constellations is hard to
handle with our method.

This paragraph is devoted to the proof of a refined enumeration formula
for new objects 
that we call {\em $3$-quasi-constellations}, which are defined by the same
conditions as constellations, except that condition \ref{ItemCyclic} is dropped.

The formula obtained is surprisingly close to the one for constellations,
although we are not able to explain this phenomenon.

\def\tc{\tilde{c}}
\def\hc{\hat{c}}

\begin{prop}
    Let $\lambda^{(1)}$, $\lambda^{(2)}$ and $\lambda^{(3)}$ be three partitions
    of lengths $\ell_1$, $\ell_2$ and $\ell_3$ and of the same size $n$, 
    such that $g=1/2 \cdot (2n+1-\ell_1-\ell_2-\ell_3)$ is a non-negative integer.
%EF9 change qc pour \tilde{c}
    Then the number $\tc_{\bm{\lambda}}$ of $3$-quasi-constellations
    of multitype 
    $(\lambda^{(1)}, \lambda^{(2)}, \lambda^{(3)})$ is given by the formula
%EFMay ne faudrait il pas remplacer n^3/(n-g0)*2^{n-g_0} par n(n-g0)*2^{n+g0}
%cf le calcul qui suit, notamment lemme 21 ?
    \begin{multline*}
        \tc_{\bm{\lambda}} = \frac{n\cdot 2^n}{2^{2g}a(\lambda^{(1)})
    a(\lambda^{(2)}) a(\lambda^{(3)})} \sum_{g_0 + g_1 +g_2 +g_3=g}
    (n-g_0)\bigg[ (g_0!)^2 \binom{n-\ell_1-2g_1}{g_0} \\
    \cdot\binom{n-\ell_2-2g_2}{g_0} 
     \binom{n-\ell_3-2g_3}{g_0}
    S_{g_1}(\lambda^{(1)}) S_{g_2}(\lambda^{(2)}) S_{g_3}(\lambda^{(3)})\bigg].
    \end{multline*}
    \label{PropQC}
\end{prop}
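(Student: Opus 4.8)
The plan is to transpose the bijective argument behind the Goupil--Schaeffer formula (Proposition~\ref{prop:GS}) to the present tripartite setting. A unicellular $3$-quasi-constellation of multi-type $(\lambda^{(1)},\lambda^{(2)},\lambda^{(3)})$ is nothing but a unicellular map with $3n$ edges whose vertices are $n$ square vertices of degree $3$ together with circle vertices, colored by $1,2,3$, of degree distributions $\lambda^{(1)},\lambda^{(2)},\lambda^{(3)}$. By Theorem~\ref{theo:main} (with $3n$ edges, hence with a global factor $2^{3n+1}$), counting these maps amounts to counting the associated $C$-decorated trees and dividing by $2^{3n+1}$; the associated $C$-decorated trees are exactly those whose $C$-permutation cycles are \emph{monochromatic}, each cycle grouping circle vertices of one fixed color, or grouping square vertices. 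Since the bijection forgets the cyclic order around each vertex, it produces precisely quasi-constellations, where condition~\ref{ItemCyclic} is dropped; this is exactly why the method applies here and not to genuine constellations. I would work with labelled objects (circles of a common color carrying distinct labels), which absorbs the symmetry factors $a(\lambda^{(i)})$.

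First I would isolate the two sources of genus. As in Proposition~\ref{prop:GS}, a circle of color $i$ and degree $\lambda^{(i)}_r$ comes from a cycle of odd length $2p^{(i)}_r+1$ built from $2p^{(i)}_r+1$ tree-vertices whose degrees refine $\lambda^{(i)}_r$, the contribution to the genus of color $i$ being $g_i=\sum_r p^{(i)}_r$. A square has degree exactly $3$, so the only odd cycles producing it have length $1$ (an unsplit degree-$3$ tree-vertex) or length $3$ (three leaves of degrees $1+1+1$); hence every split square contributes exactly $1$ to the genus and the number of split squares equals $g_0:=g-g_1-g_2-g_3$. Writing $L_i:=\ell_i+2g_i$ for the number of color-$i$ circle tree-vertices, the three leaves of a split square attach to circles of colors $1,2,3$, one of each.

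Next I would establish the genus-$0$ input. The key lemma is that the number of labelled rooted plane trees realizing a $3$-quasi-constellation with $n-g_0$ degree-$3$ squares, $g_0$ leaves of each color, and $L_i$ circles of color $i$ of prescribed degrees is
\[
N_{\mathrm{tree}}=\frac{n^3}{n-g_0}\,2^{\,n-g_0}\prod_{i=1}^3\binom{n-L_i}{g_0}\prod_{i=1}^3 (L_i-1)! ,
\]
a quantity depending on the circle degrees only through their numbers $L_i$. Here $2^{\,n-g_0}$ records the free cyclic order ($123$ or $132$) at each genuine degree-$3$ square (the underlying graph being a tree, any rotation system keeps genus $0$), the factorials $(L_i-1)!$ are the cyclic arrangements of the color-$i$ circles coming from a cyclic-lemma count in the spirit of \eqref{Eq3ConstPlanar}, and the binomials $\binom{n-L_i}{g_0}$ count the placements of the $g_0$ color-$i$ leaf-edges. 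I would then assemble the $C$-decorated trees: form the $g_0$ leaf-triples, one leaf of each color, in $(g_0!)^2$ ways; put a sign on each of the $3n+1-2g$ cycles; and divide by the cyclic-rotation overcount $\prod_{i,r}(2p^{(i)}_r+1)$ of the circle cycles. Summing $N_{\mathrm{tree}}$ over the refinements $\HH^{(i)}$ along a fixed $\bm{p}^{(i)}$ produces the factor $\prod_{i,r}\binom{\lambda^{(i)}_r-1}{2p^{(i)}_r}$, and then summing over the $\bm{p}^{(i)}$ with $\sum_r p^{(i)}_r=g_i$, the product of $\prod_i(L_i-1)!$ with the resulting $\prod_{i,r}\binom{\lambda^{(i)}_r-1}{2p^{(i)}_r}/(2p^{(i)}_r+1)$ reassembles exactly $\prod_i S_{g_i}(\lambda^{(i)})$. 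The signs give $2^{3n+1-2g}$, which against the $2^{3n+1}$ of Theorem~\ref{theo:main} leaves $2^{-2g}$; summing over $g_0,g_1,g_2,g_3$ then yields the claimed formula, the prefactor $n^2\cdot\frac{n}{n-g_0}$ coming from the $\frac{n^3}{n-g_0}$ in $N_{\mathrm{tree}}$.

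The main obstacle is the lemma for $N_{\mathrm{tree}}$, and especially the anomalous factor $\frac{n}{n-g_0}$, which has no analogue in the Goupil--Schaeffer argument. I expect it to arise from a rooting subtlety in the underlying cyclic lemma, relating the $n$ corners of color-$1$ circles (at which a quasi-constellation is rooted) to the $n-g_0$ genuine degree-$3$ squares that survive once $g_0$ of them have been dissolved into leaves; pinning down this ratio is the crux. Two further delicate points must be checked to make the arithmetic close up: that the slot-counting for attaching the leaves really produces $\binom{n-L_i}{g_0}$ rather than $\binom{n}{g_0}$, and that the cyclic order of a split square—which is free in a quasi-constellation—is already recorded by the triple-formation together with the plane tree, so that split squares contribute no extra factor of $2$ beyond the $2^{\,n-g_0}$ carried by the genuine degree-$3$ squares. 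Once these are settled, the remainder follows the template of Proposition~\ref{prop:GS}.
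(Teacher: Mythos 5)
Your overall strategy does coincide with the paper's: apply Theorem~\ref{theo:main} to the $3n$-edge map, observe that the image trees have monochromatic circle cycles while square cycles are either fixed points (degree-$3$ squares) or $3$-cycles of leaves with one leaf of each color, sum over refinements to reassemble the factors $S_{g_i}(\lambda^{(i)})$, and handle signs and cycle rotations as in Proposition~\ref{prop:GS}. But as written the proposal is not a proof: its central ingredient, the count $N_{\mathrm{tree}}$ of labelled plane trees realizing the ``prickly'' structure, is left as a conjecture that you reverse-engineered so that the assembly lands on the stated formula, and you say yourself that the factor $n/(n-g_0)$ is ``the crux'' and that you do not know how to derive it. This is exactly the step the paper supplies. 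It defines prickly planar $3$-quasi-constellations and proves Lemma~\ref{LemPricklyConst} by a leaf-erasure argument: pass to \emph{unrooted} labelled objects, delete the $3g_0$ square leaves to obtain a planar quasi-constellation of size $n-g_0$, count those by the planar formula~\eqref{EqQCPlanar} (itself $2^{n-g_0}$ times the Goulden--Jackson count~\eqref{Eq3ConstPlanar}), re-attach the leaves (insertions around a circle of final degree $I_i$ give multiset coefficients $\binom{I_i-1}{a_i}$, which sum by Vandermonde to $\prod_i\binom{n-\ell_i}{g_0}$), and finally re-root in $n$ ways. The ratio $n/(n-g_0)$ is precisely the mismatch between this re-rooting factor $n$ and the factor $n-g_0$ carried by the unrooted planar core; no new variant of the cyclic lemma is involved.

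Beyond the admitted gap, the two ``delicate points'' you flag are resolved in the paper in a way that contradicts your guesses, and your conjectured constants can be refuted concretely. Take $n=2$, $g_0=1$, $L_1=L_2=L_3=1$: the prickly trees consist of one degree-$3$ square joined to circles $c_1,c_2,c_3$ of degree $2$, each carrying one leaf; there are $2$ embeddings with $12$ corners each and no symmetry, hence $24$ rooted plane trees (and $4$ if one roots at a color-$1$ corner), whereas your formula $\frac{n^3}{n-g_0}\,2^{n-g_0}\prod_i\binom{n-L_i}{g_0}\prod_i(L_i-1)!$ gives $16$. Also, the orientation factor $2^{g_0}$ you hoped to avoid is genuinely there: the two cyclic orders $(f_1f_2f_3)$ and $(f_1f_3f_2)$ on a leaf-triple are distinct $C$-permutations, hence distinct $C$-decorated trees, so this choice is \emph{not} recorded by the plane tree, and the paper's proof explicitly includes the factor $(g_0!)^2\cdot 2^{g_0}$ for grouping and orienting the triples. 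To be fair, the bookkeeping here is treated loosely even in the paper (it is only a sketch, and its constants must be handled with care), but that only reinforces the point: the substance of this proposition is the planar prickly count and the exact conversion factors (rooting, signs, triples), and these are precisely what your proposal defers rather than establishes.
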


As in the proof of Proposition \ref{prop:GS}, we prefer to work with {\em labelled}
$3$-quasi-constella\-tions 
(circle vertices of each color are labelled separately,
square vertices are {\em not} labelled).
The multitype of such an object is a triple of compositions with obvious definition.
%EF9 change qcL pour \hat{c}
We denote by $\hc(\II,\JJ,\LL)$ the number of labelled 
$3$-quasi-constellations of multitype $(\II,\JJ,\LL)$,
where $\II$, $\JJ$ and $\LL$ are three compositions of the same size $n$.
From now on, $\ell_1$, $\ell_2$ and $\ell_3$ will denote the respective lengths
of the compositions $\II$, $\JJ$ and $\LL$.
Besides, we use the standard index notation $I_i$, $J_i$ and $L_i$
for the $i$-th component of these compositions.

%EF9 reformule pour se passer de la notation cL
In genus $0$ and size $n$, there are clearly $2^n$ times more $3$-quasi-constellations than $3$-constellations 
(indeed in the planar case, cyclically reordering the neighbours around a vertex keeps the property 
of being unicellular; this does not hold in higher genus). Hence, using Equation~\eqref{Eq3ConstPlanar}, we obtain in that case
\begin{equation}\label{EqQCPlanar}
    \hc(\II,\JJ,\LL)=%2^n cL(\II,\JJ,\LL) =
    2^n n^2 (\ell_1-1)! (\ell_2-1)! (\ell_3-1)!.
\end{equation}
%Here, $cL(\II,\JJ,\LL)$ is the number of labelled constellations of multitype
%$(\II,\JJ,\LL)$. The first equality comes from the fact that, .
%Hence relaxing condition \ref{ItemCyclic} in the definition of constellations
%simply lead to a factor $2^n$.
%The second equality follows from .

If we apply our main bijection to a 3-(quasi-)constellation, the tree object is
not necessarily a planar 3-quasi-constellation.
Indeed, one can get square vertices of degree $1$.
Therefore, we need to introduce the concept of prickly planar $3$-quasi-constellations.
\begin{defin}
    A prickly planar $3$-quasi-constellation is a rooted tree with
    circle and square vertices such that:
    \begin{enumerate}[label=(\roman*)]
        \item the circle vertices are colored with $3$ colors $1,2,3$;
        \item all edges have one circle and one square extremity;
        \item each square vertex is either a leaf or 
            linked to exactly one circle vertex of each color;
        \item the number of square leaves linked to vertices of color $1$,$2$, $3$ are the same
            (this number $g_0$ will be called the {\em prickling number}).
    \end{enumerate}
    The size $n$ of such objects is one third of the number of edges.
    For a labelled object, its multitype $(\II,\JJ,\LL)$ is defined as for (quasi-)constellations.

    Planarity is equivalent to the relation:
    \[\ell_1+\ell_2+\ell_3 = 2n-2g_0+1 \] 
\end{defin}
%\alert{Faire un dessin\dots Je laisse ça aux spécialistes d'IPE :). }
%GC8: voici un dessin. Il va ?
\begin{figure}[h!!!!!!]
\includegraphics[scale=0.9]{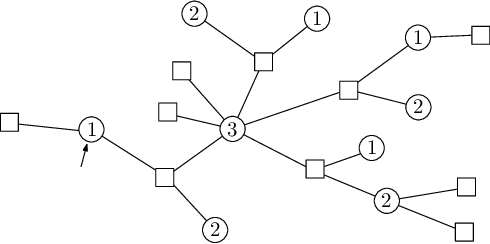}
  \caption{A prickly planar  $3$-quasi-constellation of size $n=6$, with
$l_1=4$, $l_2=4$, $l_3=1$. The prickling number is $g_0=2$.}
\label{fig:pricklyPlanarConstellation}
\end{figure}

\begin{lem}
    The number of labelled prickly planar $3$-quasi-constellations
    of multitype $(\II,\JJ,\LL)$ and prickling number $g_0$ is
    \[ n(n-g_0) 2^{n-g_0} \binom{n-\ell_1}{g_0} \binom{n-\ell_2}{g_0}
    \binom{n-\ell_3}{g_0}
    (\ell_1-1)! (\ell_2-1)! (\ell_3-1)!\] \label{LemPricklyConst}
\end{lem}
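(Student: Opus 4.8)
The plan is to count labelled prickly planar $3$-quasi-constellations of multi-type $(\II,\JJ,\LL)$ and prickling number $g_0$ by reducing to the prickling-free planar case, for which Equation~\eqref{EqQCPlanar} already provides the count. The central idea is that a prickly object is obtained from an ordinary planar $3$-quasi-constellation of smaller size by attaching square leaves to some of its circle vertices, so I would set up a bijection (or an explicit weighted correspondence) between prickly objects and pairs consisting of a smaller planar quasi-constellation together with the combinatorial data recording where the leaves were attached.

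First I would determine the size of the underlying ``core'' object. A prickly object of size $n$ has $3n$ edges; each of the $3g_0$ square leaves uses exactly one edge, so removing all square leaves leaves a genuine planar $3$-quasi-constellation with $3n-3g_0$ edges, i.e.\ of size $n-g_0$. Its multi-type is a triple of compositions $(\II',\JJ',\LL')$ of $n-g_0$ obtained from $(\II,\JJ,\LL)$ by subtracting from each circle vertex its number of attached square leaves. By~\eqref{EqQCPlanar} the number of labelled core objects of a \emph{fixed} reduced multi-type is $2^{n-g_0}(n-g_0)^2(\ell_1-1)!(\ell_2-1)!(\ell_3-1)!$, since the reduced multi-type has the same lengths $\ell_1,\ell_2,\ell_3$ as the original (attaching leaves does not change the number of circle vertices of each color).

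Next I would account for the attachment data. The core object has $n-g_0$ square vertices, each of which can receive additional square leaves; but I must be careful because the prickling condition requires exactly $g_0$ square leaves incident to color-$1$ circle vertices, and likewise $g_0$ for colors $2$ and $3$. The degree-composition data already encodes how the total degree $i_r$ of the $r$th circle vertex of color $1$ splits between its single ``core'' edge and its attached leaves, so I expect the choice of how to distribute the $g_0$ color-$1$ leaves among the $\ell_1$ circle vertices of color $1$ to contribute exactly the factor $\binom{n-\ell_1}{g_0}$ (and symmetrically $\binom{n-\ell_2}{g_0}$, $\binom{n-\ell_3}{g_0}$ for the other colors). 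The extra factor $n/(n-g_0)$ should emerge from reconciling the normalizations $n^2$ versus $(n-g_0)^2$ together with a combinatorial factor coming from the rooting or from the placement of the square leaves in the plane ordering around each circle vertex; tracking this cleanly is where I will need to be most careful.

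The main obstacle I anticipate is getting the attachment count exactly right without over- or under-counting. Two subtleties stand out: (i) the square leaves are unlabelled while the circle vertices are labelled, so I must make sure the binomial coefficients $\binom{n-\ell_r}{g_0}$ correctly count unordered distributions of $g_0$ indistinguishable leaves among the color-$r$ vertices subject to the degree constraints; and (ii) the leaves are attached at specific \emph{corners} in the planar structure, so each attachment has a positional multiplicity in the plane tree, which is precisely what must combine with the $(n-g_0)^2$ of the core count to produce $n(n-g_0)$. I would verify the bookkeeping by checking the degenerate case $g_0=0$, where all binomials equal $1$, the factor $n/(n-g_0)$ equals $1$, and the formula must collapse exactly to~\eqref{EqQCPlanar}; matching that boundary case will pin down the correct interpretation of every factor.
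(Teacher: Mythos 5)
Your plan follows the same route as the paper's proof---erase the $3g_0$ square leaves to reduce to the planar count \eqref{EqQCPlanar} for a core of size $n-g_0$, then count reattachments---but the two points you explicitly defer are not routine bookkeeping: they are the entire content of the lemma, and your placeholder explanations for them do not hold up. The factor $\binom{n-\ell_1}{g_0}$ is \emph{not} the number of ways to distribute $g_0$ indistinguishable leaves among the $\ell_1$ color-$1$ circle vertices (that number would be $\binom{g_0+\ell_1-1}{g_0}$); it counts corner placements. The actual derivation fixes the numbers $a_1,\dots,a_{\ell_1}$ of leaves received by the color-$1$ vertices; around the vertex labelled $i$, whose core degree is $I_i-a_i$, the number of ways to insert $a_i$ indistinguishable pendant edges into the \emph{cyclic} order of its core edges is the multiset coefficient $\binom{I_i-1}{a_i}$, and then the Vandermonde convolution $\sum_{a_1+\cdots+a_{\ell_1}=g_0}\prod_{i}\binom{I_i-1}{a_i}=\binom{n-\ell_1}{g_0}$ (using $\sum_i(I_i-1)=n-\ell_1$) produces the stated binomial. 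Nothing in your proposal performs this computation.

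Second, the cyclic count $\binom{I_i-1}{a_i}$ is valid only for \emph{unrooted} objects: a root corner breaks the cyclic symmetry at one vertex (a vertex of core degree $d$ carrying the root offers $d+1$ insertion slots instead of $d$), and the root corner can itself be split by an insertion. The missing idea is precisely the paper's unrooting device: since vertex-labelled planar trees have no nontrivial symmetries, one divides the rooted core count $2^{n-g_0}(n-g_0)^2(\ell_1-1)!(\ell_2-1)!(\ell_3-1)!$ by the $n-g_0$ possible roots, performs all leaf insertions on unrooted objects, and then re-roots the prickly object in $n$ ways (it has $n$ corners incident to color-$1$ circle vertices). This is exactly where the factor $n(n-g_0)$---your unexplained $n/(n-g_0)$ adjustment---comes from; without it your rooted-core setup does not go through. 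Finally, your $g_0=0$ boundary check cannot ``pin down every factor'' as you hope: it is blind to the power of $2$, since $2^n=2^{n-g_0}$ when $g_0=0$. Carried out as above, the argument yields $2^{n-g_0}$ rather than the $2^n$ displayed in the statement (this is what the paper's own proof computes, and a direct check of the smallest nontrivial case $n=2$, $g_0=1$, $\II=\JJ=\LL=(2)$, which has exactly $4$ objects, confirms it), so this discrepancy is something your method must detect and resolve, not something the degenerate case can certify.
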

\begin{proof}
    It is easier in the proof of this lemma to work with {\em unrooted}
    labelled quasi-constellations.
    Note that a planar (vertex-)labelled tree has no symmetry, hence, dealing
    with unrooted or rooted objects only changes the counting coefficients  by constant
    explicit factors (this would not be true for higher genus objects!).

    Consider an (unrooted) prickly planar $3$-quasi-constellation
    of multitype $(\II,\JJ,\LL)$ and prickling number $g_0$.
    We denote by $a_i$ (resp. $b_i$ and $c_i$) the number of square leaves attached
    to the circle vertex labelled $i$ of color $1$ (resp $2$ and $3$).
    If we erase these leaves we get an unrooted planar $3$-quasi-constellation
    of size $n-g_0$ and multitype $(\II-\aa,\JJ-\bb,\LL-\cc)$,
    where, by definition, $\II-\aa=(I_1-a_1,\dots,I_{\ell_1}-a_{\ell_1})$
    and similar definitions hold for $\JJ-\bb$ and $\LL-\cc$.

    The number of unrooted planar $3$-quasi-constellation
    of multitype $(\II-\aa,\JJ-\bb,\LL-\cc)$ is, by equation 
    \eqref{EqQCPlanar} (beware of the rooting, which yields a factor $n-g_0$):
    \[2^{n-g_0} (n-g_0) (\ell_1-1)! (\ell_2-1)! (\ell_3-1)!. \]
    If we want to recover the prickly planar $3$-quasi-constellation,
    one has to remember for each circle vertex of color $1$ (resp. $2$, $3$)
    where to add the $a_i$ (resp. $b_i$, $c_i$) square leaves.
    This gives 
    \[\prod_{i=1}^{\ell_1} \binom{I_i-1}{a_i} 
    \prod_{i=1}^{\ell_2} \binom{J_i-1}{b_i}
    \prod_{i=1}^{\ell_3} \binom{K_i-1}{c_i} \]
    choices (working with unrooted objects is crucial here).
    Finally the number of unrooted prickly planar $3$-quasi-constellation
    of multitype $(\II,\JJ,\LL)$ and prickling number $g_0$ is
    \begin{multline*}
        2^{n-g_0} (n-g_0) (\ell_1-1)! (\ell_2-1)! (\ell_3-1)!\\
        \cdot \! \sum_{a_1, \dots, a_{\ell_1} \atop
        a_1+\dots+a_{\ell_1}=g_0} \! \prod_{i=1}^{\ell_1} 
        \binom{I_i-1}{a_i} 
        \cdot \! \sum_{b_1,\dots,b_{\ell_2} \atop b_1+\dots+b_{\ell_2}=g_0}
        \! \prod_{i=1}^{\ell_2} 
        \binom{J_i-1}{b_i} 
        \cdot \! \sum_{c_1,\dots,c_{\ell_3} \atop c_1+\dots+c_{\ell_3}=g_0}
        \! \prod_{i=1}^{\ell_3} 
        \binom{L_i-1}{c_i}.
    \end{multline*}
    The first (resp. second, third) sum corresponds to the number of ways
    of choosing $g_0$ elements among $n-\ell_1$ (resp. $n-\ell_2$, $n-\ell_3$).
    This yields the formula of the lemma, because every unrooted object
    can be rooted in $n$ different ways.
\end{proof}

The end of the proof of Proposition \ref{PropQC} 
is now very similar to the one  
of the Goupil-Schaeffer formula (Proposition \ref{prop:GS}).
Therefore, we do not give all the details.

\begin{proof}
    [Sketch of proof of Proposition \ref{PropQC}]
    Our main bijection sends a $3$-quasi-constellation 
    of multitype $(\II,\JJ,\LL)$ to a C-deco\-rated
prickly planar $3$-quasi-constellation of multitype $(\HH,\KK,\MM)$,
where $\HH$, $\KK$ and $\MM$ are refinements of $\II$, $\JJ$ and $\LL$.

For given refinements $\HH$, $\KK$ and $\MM$ of respective lengths
$\ell_1 + 2g_1$, $\ell_2+2g_2$ and $\ell_3+2g_3$,
the number of labelled prickly planar $3$-quasi-constellations 
is (by Lemma \ref{LemPricklyConst})
\begin{multline*}
    n(n-g_0) 2^{n-g_0} \binom{n-\ell_1-2g_1}{g_0} \binom{n-\ell_2-2g_2}{g_0}
    \binom{n-\ell_3-2g_3}{g_0} \\
    \cdot (\ell_1+2g_1-1)! (\ell_2+2g_2-1)! (\ell_3+2g_3-1)!,
 \end{multline*}
    where $g_0 = 1/2 \cdot (2n+1-\ell_1-2g_1-\ell_2-2g_2-\ell_3-2g_3)$
    should be a non-negative integer and is the prickling
    number of the object.

We need the number of refinements $\HH$ of $\II$ of length $\ell_1 +2g_1$,
where each part $I_r$ of $\II$ corresponds to an odd number $2p_r+1$ of parts of $\HH$.
It is given by %EFMay remplace 2p_r+1 par 2p_r
\[\sum_{p_1+\dots+p_{\ell_1}=g_1} \prod_{r=1}^{\ell_1}
\binom{I_r-1}{2p_r}\]
and similar formulas hold for the number of refinements $\KK$ and $\MM$ of $\JJ$ and $\LL$.

When we apply our bijection to a quasi-constellation, we get a C-decorated tree.
As in the proof of Proposition \ref{prop:GS},
to transform this tree into a labelled prickly $3$-quasi-constellations, we need:
\begin{itemize}
    \item to choose for each cycle of circle vertices one distinguished vertex
        (factor $\prod_r 2p_r+1$ for vertices of color $1$ and similar factors
        for other colors);
%EFMay plutot ungroup que group, non?
%    \item to group the square leaves by triples and to choose an orientation
%        for each triple (factor $(g_0!)^2\cdot 2^{g_0}$);
\item to ungroup the $g_0$ cycles of three square leaves
(factor $1/((g_0!)^2\cdot 2^{g_0})$);   
    \item to forget the signs (factor $1/2^{\ell_1+\ell_2+\ell_3+n}$).
\end{itemize}
Finally, we get, by Theorem \ref{theo:main}, that $2^{3n+1} \hc(\II,\JJ,\LL)$
is %EFMay le \prod_r 2p_r+1} est mal placé je crois, j'ai changé
%\begin{multline*}
%\sum_{g_0+g_1+g_2+g_3=g} \frac{2^{\ell_1+\ell_2+\ell_3+n}
%}{(g_0!)^2\cdot 2^{g_0} \prod_r 2p_r+1} \\
%\cdot \sum_{{p_1+\dots+p_{\ell_1}=g_1 \atop q_1+\dots+q_{\ell_2}=g_2}
%\atop s_1+\dots+s_{\ell_3}=g_3} 
%\prod_{r=1}^{\ell_1} \binom{I_r-1}{2p_r+1}  
%\cdot  \prod_{r=1}^{\ell_2} \binom{J_r-1}{2q_r+1}  
%\cdot \prod_{r=1}^{\ell_3} \binom{L_r-1}{2s_r+1}  \\
%\cdot \bigg[ n(n-g_0) 2^n \binom{n-\ell_1-2g_1}{g_0}
%\binom{n-\ell_2-2g_2}{g_0}\\
%    \binom{n-\ell_3-2g_3}{g_0}
%        (\ell_1+2g_1-1)! (\ell_2+2g_2-1)! (\ell_3+2g_3-1)! \bigg]
%    \end{multline*}
%    Indeed, the term in the bracket is the number of labelled
%    prickly planar $3$-quasi-constellations
%    of a given multitype $(\HH,\KK,\MM)$, 
%    the factor in the second line the number of possible multitypes
%    and the first line the ratio between numbers of labelled and $C$-%decorated objects.
\begin{multline*}
\sum_{g_0+g_1+g_2+g_3=g} 2^{\ell_1+\ell_2+\ell_3+n}
(g_0!)^2\cdot 2^{g_0} \\
\!\!\cdot \sum_{{p_1+\dots+p_{\ell_1}=g_1 \atop q_1+\dots+q_{\ell_2}=g_2}
\atop s_1+\dots+s_{\ell_3}=g_3} 
\prod_{r=1}^{\ell_1} \frac1{2p_r+1}\binom{I_r-1}{2p_r}  
\cdot  \prod_{r=1}^{\ell_2} \frac1{2q_r+1}\binom{J_r-1}{2q_r}  
\cdot \prod_{r=1}^{\ell_3} \frac1{2s_r+1}\binom{L_r-1}{2s_r}  \\
\cdot \bigg[ n(n-g_0) 2^{n-g_0} \binom{n-\ell_1-2g_1}{g_0}
\binom{n-\ell_2-2g_2}{g_0}\\
    \binom{n-\ell_3-2g_3}{g_0}
        (\ell_1+2g_1-1)! (\ell_2+2g_2-1)! (\ell_3+2g_3-1)! \bigg],
    \end{multline*}
which equals (using $\ell_1+\ell_2+\ell_3=2n-2g+1$)
\begin{multline*}
\sum_{g_0+g_1+g_2+g_3=g} n(n-g_0)2^{4n-2g+1}
(g_0!)^2 \\
\cdot \binom{n-\ell_1-2g_1}{g_0}S_{g_1}(\lambda^{(1)})
\cdot\binom{n-\ell_2-2g_2}{g_0}S_{g_2}(\lambda^{(2)})
    \cdot\binom{n-\ell_3-2g_3}{g_0}S_{g_3}(\lambda^{(3)}).
    \end{multline*}

    Using the fact that 
    \[\hc(\II,\JJ,\LL)= a(\lambda^{(1)}) a(\lambda^{(2)})
    a(\lambda^{(3)}) \cdot \tc_{\lambda^{(1)},\lambda^{(2)},\lambda^{(3)}}\]
    if $\lambda^{(1)}$, $\lambda^{(2)}$ and $\lambda^{(3)}$ are the sorted
    version of $\II$, $\JJ$ and $\KK$, this ends the proof of 
    Proposition~\ref{PropQC}.
\end{proof}

\subsection{Enumeration of constellations taking only the numbers of vertices into account}
In the previous section, we have seen that Theorem \ref{theo:main} is not suitable
to count unicellular constellations,
as it does not give any information on the rotation system of the map.

Nevertheless, it is possible to get some enumerative results on constellations not
by using Theorem \ref{theo:main} directly, but by mimicking its proof:
find a combinatorial induction for constellations and
then find simpler objects with the same induction.

In this paragraph, we give a combinatorial proof of the enumeration of $3$-constella\-tions
with respect to their number of circle vertices of each color.

\subsubsection{Combinatorial induction for $3$-constellations}

Let us first introduce some notation:
define $\CCC_{\ell_1,\ell_2,\ell_3}(n)$ as the set of $3$-constellations with $\ell_1$
(resp. $\ell_2$ and $\ell_3$) circle vertices of color $1$ (resp. $2$ and $3$).
%    \item[$\CCC^{(m_1,m_2,m_3)}_{(\ell_1,\ell_2,\ell_3)}(n)$] set of the same objects with
%        subsets of size $m_1$, $m_2$ and $m_3$ of marked circle vertices of
%        color $1$, $2$ and $3$.
Its cardinality is denoted $c_{\ell_1,\ell_2,\ell_3}(n)$, and the corresponding genus $g$ is given by $\ell_1+\ell_2+\ell_3=2n-2g+1$.
 For $g=0$ it is well known that $c_{\ell_1,\ell_2,\ell_3}(n)$ 
is the multitype ($3$ types here) Narayana number:
\begin{equation}\label{eq:cg0}
c_{\ell_1,\ell_2,\ell_3}(n)=\frac{1}{n}\binom{n}{\ell_1}\binom{n}{\ell_2}\binom{n}{\ell_3},
\end{equation}
see for instance~\cite{BernardiMoralesSymet} for a combinatorial proof. 

Let us apply Chapuy's bijection (recalled in Subsection~\ref{SubsectTrisections}) to a 
$3$-constellation of genus $g$ in $\CCC_{\ell_1,\ell_2,\ell_3}(n)$.
Two things may happen:
\begin{itemize}
    \item either the sliced vertex is a circle vertex, in which case we get
        a $3$-constella\-tion of genus $g-h$ with $2h+1$ marked circle vertices
        of the corresponding color (for some $h \ge 1$);
    \item or the sliced vertex is a square vertex.
        In this case, as square vertices have degree $3$, the resulting map
        has always genus $g-1$ and has three square leaves.
        Erasing these leaves, we get a $3$-constellation of size $n-1$
        and genus $g-1$.
\end{itemize}
% GC8: mis partout "mapping" plutôt que "map", qui est un peu confusionnant...
In the first case, Chapuy's inverse mapping works well and always produces a constellation
in $\CCC_{\ell_1,\ell_2,\ell_3}(n)$.

In the second case, one has to choose where to add square leaves, that is
to choose a corner $c_i$ of
a circle vertex of color $i$ for $i=1,2,3$ ($n(n-1)^2$ choices;
beware of the root).
Then, when we apply Chapuy's inverse mapping, it may happen that the newly created
square vertex does not fulfill condition~\ref{ItemCyclic} of the definition
of constellations (Definition~\ref{def:constellation}).
\begin{lem}
    Chapuy's inverse mapping leads to a constellation if and only if
    the three chosen corners appear in the cyclic order $(c_1,c_2,c_3)$
    when we turn 
% GC8: added "clockwise" (comme dans la description de ma bijection au début).
clockwise
around the unique face of the map.
    \label{LemGoodTriplet}
\end{lem}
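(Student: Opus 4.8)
The plan is to determine explicitly the rotation system produced around the merged square vertex by Chapuy's inverse mapping, and to compare it with condition~\ref{ItemCyclic}. First I would recall that, since a square vertex has degree $3$, slicing it is a single (non-recursive) step of the construction of paragraph~\ref{SubsectTrisections}: a degree-$3$ vertex is cut into three degree-$1$ vertices, so no further trisection can appear and the genus drops by exactly one. Consequently, the inverse mapping applied to the three square leaves attached at $c_1,c_2,c_3$ is also a single step, namely the merging of these three leaves into one degree-$3$ vertex $v$, as described in~\cite{Ch09}. The only unknown is the cyclic order in which the three half-edges (towards the circle vertices of colours $1,2,3$) sit around $v$.

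The key computation is local and purely combinatorial. I would encode the map by its set of darts together with the face permutation $\phi$, using that at a leaf $L_i$ one has $\phi(\bar{d}_i)=d_i$ (the face turns around the single half-edge), where $d_i$ and $\bar{d}_i$ are the two darts of the edge incident to $L_i$. Traversing the unique face, the three leaves are met in some cyclic order, which cuts the face into three arcs $A,B,C$ joining consecutive leaves. Merging the three leaves into $v$ replaces the three turn-arounds $\phi(\bar{d}_i)=d_i$ by $\phi(\bar{d}_i)=d_{\sigma(i)}$, where $\sigma$ is the rotation chosen around $v$. Among the two possible cyclic orders of three half-edges, a direct check shows that exactly one reconnects $A,B,C$ into a single cycle (hence yields a unicellular map, with genus increased by one, as required by Proposition~\ref{prop:Eg_decomp}), while the other splits the face into three; Chapuy's merging is by definition the former. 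I would then verify that this distinguished rotation is precisely the cyclic order in which the corners $c_1,c_2,c_3$ are visited along the face, so that the rotation around $v$ is forced by the face order of the three leaf-corners.

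It remains to match this with the definition of a constellation. Condition~\ref{ItemCyclic} requires the colours $1,2,3$ to appear counterclockwise around $v$, i.e.\ in a prescribed direction for the rotation system. Translating the face order obtained above through the orientation conventions of the paper (corners are numbered clockwise around the face, while the rotation system records the clockwise order of half-edges), the rotation around $v$ satisfies condition~\ref{ItemCyclic} exactly when $c_1,c_2,c_3$ occur in the cyclic order $(c_1,c_2,c_3)$ as one turns clockwise around the face. Since the merging always produces a legitimate unicellular map, its output is a constellation if and only if this single local condition holds, which is the statement of the lemma.

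I expect the main obstacle to be the orientation bookkeeping of the last step: one must track carefully the two distinct notions of ``clockwise'' (around the face versus around a vertex) and how the face-traversal direction relates to the rotation system, since a single sign error would replace the cyclic order $(c_1,c_2,c_3)$ by its reverse. The dart computation is short, but it is exactly there that the convention must be fixed so that the ``good'' face order matches the counterclockwise colour order $1,2,3$ rather than its mirror image.
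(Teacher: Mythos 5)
Your proposal is correct, but it reaches the key fact by a different route than the paper. The paper's proof is essentially a one-line appeal to the explicit description of the inverse mapping: it reads off from \cite[Paragraph 4.2]{Ch09} that three glued leaves $f,f',f''$ with incident edges $e,e',e''$ produce a vertex around which $(e,e',e'')$ appear counterclockwise if and only if $f,f',f''$ appear in that order clockwise around the face, and then matches this with condition \ref{ItemCyclic}. You instead avoid the explicit gluing rule altogether: your dart computation shows that of the two possible cyclic orders of the three half-edges at the merged square vertex, exactly one reconnects the arcs $A,B,C$ into a single face (and then the genus rises by one, consistently with Euler's formula, since $V$ drops by $2$ and $E$ is unchanged), while the other closes each arc into its own face; since the inverse mapping of Proposition \ref{prop:Eg_decomp} must output a unicellular map, its rotation at the new vertex is forced to be the first one. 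This is genuinely more self-contained --- it uses only \emph{that} Chapuy's inverse mapping produces unicellular maps, not \emph{how}, and it shows in passing that any genus-increasing gluing of three leaves would satisfy the same lemma. What the paper's route buys is brevity and inherited orientation conventions; what yours buys is independence from the cited construction. The one step you assert rather than carry out is the final orientation translation (that with the paper's conventions --- rotation system given by clockwise order at vertices, corners labelled clockwise around the face --- the unicellular rotation places the colours counterclockwise around the square vertex exactly when $c_1,c_2,c_3$ occur clockwise around the face). You rightly flag this as the step where a sign error would reverse the cyclic order; the check does come out as you claim (a clockwise tour of the face arriving at $v$ along an edge leaves along the counterclockwise-next half-edge at $v$), so your conclusion agrees with the lemma and there is no gap in substance.
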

\begin{proof}
%    See the details of the inverse construction \cite[paragraph ??]{Ch09}.
%    \alert{(Guillaume, je te laisse mettre la ref vers le bon paragraphe de ton papier
%    et plus de détails si tu le juges nécessaire.)}
% GC8: cela ne me semble même pas nécessaire, la construction étant entièrement
% décrite au paragraphe 2.2. (dans l'autre sens, certes, mais on s'en fiche
% non? puisque c'est une bijection...)
This is direct by construction: let $f,f',f''$ be three leaves in a unicellular
map and $e,e',e''$ the edges incident to them, respectively. Then $(e,e',e'')$
appear in counterclockwise order around the new vertex created by the gluing of the
three leaves if and only if $f,f',f''$ appear in clockwise order around the face in
the original map. 
See Subsection~\ref{SubsectTrisections} (or \cite[Paragraph 4.2]{Ch09} for a
direct description of the inverse mapping). 
\end{proof}
% GC8: remplacé partout "triplet" par "triple"
\begin{lem}
    For each constellation of size $n-1$ and genus $g-1$,
    exactly $n^2(n-1)/2$ triples of corners $(c_1,c_2,c_3)$
    over $n(n-1)^2$ satisfy the condition above.
\end{lem}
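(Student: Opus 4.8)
The plan is to prove the statement in two stages: first determine the cyclic sequence of colours of the circle-corners encountered while going around the unique face, and then reduce the count to an elementary lattice-point computation.

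First I would record the basic combinatorial data. Write $N=n-1$ for the size of the constellation. It has $N$ square vertices, each of degree $3$, hence $3N$ edges, and being unicellular it has a single face of degree $6N$, i.e. $6N$ corners. Since the underlying graph is bipartite (every edge joins a square and a circle vertex), the corners met while travelling along the face alternate between square-corners and circle-corners, so exactly $3N$ of them are circle-corners. Moreover, because each square vertex is joined to exactly one circle vertex of each colour, the number of edges incident to colour-$i$ circle vertices equals the number of squares, namely $N$; hence there are exactly $N$ circle-corners of each colour $i\in\{1,2,3\}$. In particular the number of triples $(c_1,c_2,c_3)$ with $c_i$ a corner of a colour-$i$ circle vertex is $N^3=(n-1)^3$, matching the denominator in the statement.

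The heart of the argument is the structural fact that, read clockwise around the unique face, the colours of the $3N$ circle-corners form the purely periodic sequence $1,2,3,1,2,3,\dots$. To establish this I would trace the face through a square vertex $s$: by condition \ref{ItemCyclic} the three edges of $s$ point counterclockwise to colours $1,2,3$, equivalently in the clockwise rotation system they appear in the order $1,3,2$. A short computation with the rotation and edge involutions then shows that, following the face clockwise, each time one leaves a colour-$i$ circle vertex, crosses a square vertex, and reaches the next circle-corner, the colour increases by $1\pmod 3$. As every two consecutive circle-corners are separated by exactly one square-corner, all consecutive colour increments equal $+1\pmod 3$, forcing the pattern $1,2,3,1,2,3,\dots$. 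I expect this orientation bookkeeping to be the main obstacle: the very same local analysis, carried out with the wrong matching of ``clockwise rotation system'' against ``clockwise around the face'', produces the reversed pattern $1,3,2,\dots$, which would give the complementary and incorrect count $\tfrac{N^2(N-1)}{2}$. The crux is thus to fix the conventions so that the increment is genuinely $+1$.

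Granting the periodic pattern, I would finish by a direct count. Label the circle-corners $0,1,\dots,3N-1$ clockwise, so the corner at position $p$ has colour $(p\bmod 3)+1$; thus colour $1$ sits at positions $3i$, colour $2$ at $3i+1$, colour $3$ at $3i+2$ for $0\le i\le N-1$. A triple is counted exactly when its three positions occur in the clockwise cyclic order $(c_1,c_2,c_3)$, i.e. when the linear positions $a$ (colour $1$), $b$ (colour $2$), $c$ (colour $3$) satisfy one of $a<b<c$, $b<c<a$, $c<a<b$. Translating each inequality through the residues and summing the three resulting elementary counts gives
\[
\binom{N+2}{3}+2\binom{N+1}{3}=\frac{N^2(N+1)}{2},
\]
and substituting $N=n-1$ yields $\tfrac{n(n-1)^2}{2}$, as claimed; as a consistency check, for $N=1$ the unique triple is good and the formula gives $1$. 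Alternatively, the last step can be phrased via the signed count assigning $+1$ to a triple in clockwise order $(c_1,c_2,c_3)$ and $-1$ otherwise: one has $(\text{good})+(\text{bad})=N^3$, while the periodic colouring makes $(\text{good})-(\text{bad})=N^2$, again giving $(\text{good})=\tfrac{N^2(N+1)}{2}$.
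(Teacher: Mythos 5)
Your proof is correct and takes essentially the same route as the paper's: the paper likewise observes that the $3(n-1)$ circle-corners, read clockwise around the unique face, carry the periodic colour pattern $(1,2,3,1,2,3,\dots)$, and then dismisses the counting as a ``simple exercise.'' Your computation $\binom{N+2}{3}+2\binom{N+1}{3}=\tfrac{N^2(N+1)}{2}$ with $N=n-1$ carries out that exercise correctly, and the orientation subtlety you flag (clockwise face tour versus counterclockwise vertex rotation, forcing increment $+1 \bmod 3$ rather than $-1$) is indeed the one point requiring care and is resolved the right way.
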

\begin{proof}
%    Left to the reader.
% GC8: un peu vache! J'ai donné une indication
Observe that a rooted unicellular $3$-constellation of size $n-1$ has
$n$ (resp. $n-1$) corners incident to circle vertices 
of color $1$ (resp. $2$ and $3$),
and that clockwise around the face the colors of these
$3n-2)$ corners appear in the order $(1,2,3,1,2,3,\dots, 1,2,3,1)$. From there
it is a simple exercise to check the statement of the lemma.
\end{proof} 

Finally, with $g$ defined by $\ell_1+\ell_2+\ell_3+2g=2n+1$, we get the following inductive relation (for $g>0$):
\begin{multline}
    2g\ \! c_{\ell_1,\ell_2,\ell_3}(n) =\frac{n^2(n-1)}{2} c_{\ell_1,\ell_2,\ell_3}(n-1)
    +\sum_{h \ge 1}
    \binom{\ell_1+2h}{2h+1} c_{\ell_1+2h,\ell_2,\ell_3}(n)\\ +
    \binom{\ell_2+2h}{2h+1} c_{\ell_1,\ell_2+2h,\ell_3}(n)+
    \binom{\ell_3+2h}{2h+1} c_{\ell_1,\ell_2,\ell_3+2h}(n).
    \label{EqIndConst}
\end{multline}
%EFMay ajout de +1 dans le membre de droite, non?
Note that the coefficients $c_{\ell_1,\ell_2,\ell_3}(n)$ are completely specified by the induction and by~\eqref{eq:cg0} (expression for $g=0$). 

We can define some refinement of the number $c_{\ell_1,\ell_2,\ell_3}(n)$ by the induction (with $g$ defined as usual by $\ell_1+\ell_2+\ell_3+2g=2n+1$)
\begin{multline*}
    2g\ c_{\ell_1,\ell_2,\ell_3}(n;g_0) =\frac{n^2(n-1)}{2} c_{\ell_1,\ell_2,\ell_3}(n-1;g_0-1)
    +\sum_{h \ge 1}                                                                       
        \binom{\ell_1+2h}{2h+1} c_{\ell_1+2h,\ell_2,\ell_3}(n;g_0)\\+                                  
            \binom{\ell_2+2h}{2h+1} c_{\ell_1,\ell_2+2h,\ell_3}(n;g_0)+                                  
                \binom{\ell_3+2h}{2h+1} c_{\ell_1,\ell_2,\ell_3+2h}(n;g_0)
\end{multline*}
and initial conditions that for $g=0$, $c_{\ell_1,\ell_2,\ell_3}(n;0)=c_{\ell_1,\ell_2,\ell_3}(n)$ and $c_{\ell_1,\ell_2,\ell_3}(n;g_0)=0$ for $g_0>0$. 
%EFMay je crois que ca ne suffit pas, il faut tout de meme definir le cas g=0% cf aussi remarque au Corollaire 25
%\begin{align*}
%    c_{\ell_1,\ell_2,\ell_3}(n;g_0) &= 0\text{ for }g>0
%    \ell_1+\ell_2+\ell_3+2g_0 > 2n+1 ;\\
%    c_{1,1,1}(1;0)=1.
%\end{align*}

Then an immediate induction on $g$ proves that
\[c_{\ell_1,\ell_2,\ell_3}(n)= \sum_{g_0 \ge 0}  c_{\ell_1,\ell_2,\ell_3}(n;g_0).\]

Note that the parameter $g_0$ does not {\em a priori} have a combinatorial
interpretation.
It is a computational artefact introduced to mimic an induction
relation on planar objects that we shall see in the next section.

\subsubsection{A planar object with (almost) the same induction}
Denote by $d_{\ell_1,\ell_2,\ell_3}(n;g_0)$ the number of planar $3$-constellations of
size $n-g_0$ endowed with:
\begin{itemize}
    \item a $C$-permutation of its circle vertices of each color with
        respectively $\ell_1$, $\ell_2$ and $\ell_3$ cycles;
    \item 
%EF9 $g_0$ indistinguishable triples of $(f_1,f_2,f_3)$ of square leaves
     %   where $f_i$ is attached to a circle vertex of color $i$.
An unordered set of $g_0$ triples of square leaves, such that the triples are mutually disjoint
and each triple is of the form $f_1,f_2,f_3$, with $f_i$ connected to a circle vertex of color $i$.  
\end{itemize}
\begin{lem}
    These numbers satisfy the induction (with $g$ defined as usual by $\ell_1+\ell_2+\ell_3=2n-2g+1$), for $g>0$:
\begin{multline*}
    2g\ d_{\ell_1,\ell_2,\ell_3}(n;g_0) =2 n(n-1)^2 d_{\ell_1,\ell_2,\ell_3}(n-1;g_0-1)
    +\sum_{h \ge 1}                                                                       
        \binom{\ell_1+2h}{2h+1} d_{\ell_1+2h,\ell_2,\ell_3}(n;g_0)\\+                                  
            \binom{\ell_2+2h}{2h+1} d_{\ell_1,\ell_2+2h,\ell_3}(n;g_0)+                                  
                \binom{\ell_3+2h}{2h+1} d_{\ell_1,\ell_2,\ell_3+2h}(n;g_0).
\end{multline*}
\end{lem}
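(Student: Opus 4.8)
The plan is to establish the identity by the same double-counting scheme that produces the constellation induction~\eqref{EqIndConst}, but carried out entirely on the decorated planar side. First I would attach to each such object a genus $g$, namely the genus of the unicellular $3$-constellation obtained by merging the cycles of the three $C$-permutations and gluing each prickling triple into a genuine degree-$3$ square vertex. If $N_i$ denotes the number of color-$i$ circle vertices of the underlying plane tree, then $\sum_i N_i = 2(n-g_0)+1$ (a tree whose $n-g_0$ square vertices all have degree $3$ has exactly that many circle vertices), and a short computation gives $2g = \sum_i (N_i-\ell_i) + 2g_0 = 2n+1-\ell_1-\ell_2-\ell_3$, the genus of~\eqref{EqIndConst}. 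I would then \emph{define the defects} of a $d$-object to be the non-minimal elements of its three $C$-permutations together with two formal copies of each prickling triple; by the displayed computation the number of defects of every object with parameters $(\ell_1,\ell_2,\ell_3;n;g_0)$ is exactly $2g$, so counting pairs (object, marked defect) gives the left-hand side $2g\,d_{\ell_1,\ell_2,\ell_3}(n;g_0)$. It then remains to enumerate these pairs according to the type of the marked defect, which splits into the three ``circle'' cases and the ``prickling'' case.

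Second, for a defect that is a non-minimal element of the color-$i$ $C$-permutation, I would apply the cycle-splitting map of Lemma~\ref{lem:iso_signed} (the core of Proposition~\ref{prop:Tg_decomp}) to the cycle carrying the marked element, leaving the underlying plane tree and the prickling untouched. This turns the marked element into a distinguished family of $2h+1$ cycles for some $h\ge 1$ and raises the number of color-$i$ cycles from $\ell_i$ to $\ell_i+2h$ (hence lowers the genus to $g-h$), while the inverse merges a chosen $(2h+1)$-subset of the color-$i$ cycles. Since the only freedom in the inverse is the choice of which $2h+1$ of the $\ell_i+2h$ color-$i$ cycles are merged, the pairs of this type are counted by $\sum_{h\ge 1}\binom{\ell_i+2h}{2h+1}\,d_{\dots,\ell_i+2h,\dots}(n;g_0)$, reproducing exactly the three binomial sums on the right-hand side.

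Finally, for a defect that is one of the two copies of a prickling triple, I would delete the marked triple (remembering which copy was chosen): this removes three leaves, decreases $n$ and $g_0$ each by one, and leaves a $d$-object $O'$ with the same $\ell_i$'s, i.e.\ an object counted by $d_{\ell_1,\ell_2,\ell_3}(n-1;g_0-1)$. To invert, one reattaches a triple by choosing, for each color $i$, a corner incident to a color-$i$ circle vertex of $O'$; as in the corner count preceding~\eqref{EqIndConst}, that number is $n-1$ for each color (the $n-g_0$ edges to genuine squares plus the $g_0-1$ remaining prickling leaves give total degree $n-1$ at the color-$i$ circles), and since no cyclic condition constrains leaves there are $(n-1)^3$ reattachments. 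The extra factor $2$ comes from the binary copy of the deleted defect, yielding $2(n-1)^3\,d_{\ell_1,\ell_2,\ell_3}(n-1;g_0-1)$ and completing the induction. The main point to get right — and the only place where the $d$-induction genuinely departs from the constellation one — is precisely this last coefficient: the absence of condition~(iv) turns the constellation count $n(n-1)^2/2$ of admissible corner-triples into the free count $(n-1)^3$, while the genus bookkeeping forces the factor $2$ (equivalently, the orientation a triple would acquire when glued into a square vertex, as in the proof of Proposition~\ref{PropQC}). I would therefore spend most of the care on verifying the corner count and the ``two defects per triple'' normalization, everything else being a direct transcription of Lemma~\ref{lem:iso_signed} and the argument for~\eqref{EqIndConst}.
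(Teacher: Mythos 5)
Your proof is correct and takes essentially the same route as the paper's: the paper establishes the two identities $g_0\, d_{\ell_1,\ell_2,\ell_3}(n;g_0)=(n-1)^3\, d_{\ell_1,\ell_2,\ell_3}(n-1;g_0-1)$ (erase a marked triple of leaves, invert by choosing one corner per color) and $2(g-g_0)\, d_{\ell_1,\ell_2,\ell_3}(n;g_0)=\sum_{h\ge 1}\binom{\ell_i+2h}{2h+1}d_{\dots,\ell_i+2h,\dots}(n;g_0)$ summed over the three colors (the cycle-splitting of Lemma~\ref{lem:iso_signed}), and adds twice the first to the second. Your ``defect'' double counting, with two formal copies of each prickling triple and the non-minimal elements of the three $C$-permutations, is exactly this computation repackaged into a single marked-object count.
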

\begin{proof}
    First, we have
    \[g_0\ \! d_{\ell_1,\ell_2,\ell_3}(n;g_0) = n (n-1)^2 d_{\ell_1,\ell_2,\ell_3}(n-1;g_0-1).\]
    Indeed, the left-hand side counts the same objects as above
    with a marked triple of square leaves.
    If we erase this triple, we get objects
    counted by $d_{\ell_1,\ell_2,\ell_3}(n-1;g_0-1)$.
    This can be inverted if we remember in which corners the leaves were attached  
    ($n(n-1)^2$ possibilities ; beware of the root).

    Second, the induction for $C$-permutations leads to 
    \begin{multline*}
        2(g-g_0) d_{\ell_1,\ell_2,\ell_3}(n;g_0) = \sum_{h \ge 1}
        \binom{\ell_1+2h}{2h+1} d_{\ell_1+2h,\ell_2,\ell_3}(n;g_0)\\+                                  
            \binom{\ell_2+2h}{2h+1} d_{\ell_1,\ell_2+2h,\ell_3}(n;g_0)+                                  
                \binom{\ell_3+2h}{2h+1} d_{\ell_1,\ell_2,\ell_3+2h}(n;g_0).
            \end{multline*}
\end{proof}

\begin{corol}\label{coro:cd}
    For each $n,g_0,\ell_1,\ell_2,\ell_3$, we have:
%EFMay je crois qu'il faut multiplier le membre de gauche par
%2^{2n-2g0+1} pour qu'il y ait une egalite pour g=0
    \[2^{2n+1} (n-g_0) \, c_{\ell_1,\ell_2,\ell_3}(n;g_0) = n \, d_{\ell_1,\ell_2,\ell_3}(n;g_0).\]
\end{corol}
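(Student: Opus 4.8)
The plan is to prove the identity by induction on the genus $g$ attached to the parameters (the $g$ of the recurrence~\eqref{EqIndConst}), by showing that the auxiliary quantity
\[\tilde c_{\ell_1,\ell_2,\ell_3}(n;g_0):=\frac{n}{2^{2g_0}(n-g_0)}\,d_{\ell_1,\ell_2,\ell_3}(n;g_0)\]
satisfies the very same recurrence and boundary values as $c_{\ell_1,\ell_2,\ell_3}(n;g_0)$. Once this is done the corollary follows at once.

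First I would substitute the recurrence for $d$ into the definition of $\tilde c$. The three ``circle'' sums on the right-hand side leave $n$ and $g_0$ untouched and only alter the $\ell_i$, so each of them carries the same prefactor $\tfrac{n}{2^{2g_0}(n-g_0)}$ as $\tilde c_{\ell_1,\ell_2,\ell_3}(n;g_0)$; hence they reproduce verbatim the binomial sums of the recurrence for $c$. The only genuine computation concerns the ``square'' term, where the parameters pass from $(n;g_0)$ to $(n-1;g_0-1)$ and the prefactor changes from $\tfrac{n}{2^{2g_0}(n-g_0)}$ to $\tfrac{n-1}{2^{2g_0-2}(n-g_0)}$. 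One checks the elementary identity
\[\frac{n}{2^{2g_0}(n-g_0)}\cdot 2(n-1)^3=\frac{n(n-1)^2}{2}\cdot\frac{n-1}{2^{2g_0-2}(n-g_0)},\]
which turns the coefficient $2(n-1)^3$ of the $d$-recurrence into exactly the coefficient $\tfrac{n(n-1)^2}{2}$ of the $c$-recurrence. Thus $\tilde c$ and $c$ obey the same recurrence for every $g\ge 1$.

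The step I expect to be the real (if modest) obstacle is the base case $g=0$, which is precisely where the recurrence, whose left-hand side is $2g$ times the unknown, is vacuous and transmits no information. Here I would first observe that both refinements are supported on $g_0=0$ when $g=0$: in the $d$-objects the $\ell_i$ cycles of the $C$-permutation and the $g_0$ leaf-triples each contribute a nonnegative amount to the genus, forcing $0\le g_0\le g$, and for $c$ the same bound $g_0\le g$ is propagated through the defining induction from the initial data $c_{\ell_1,\ell_2,\ell_3}(0;g_0)=\delta_{g_0,0}\,c_{\ell_1,\ell_2,\ell_3}(0)$. When $g=g_0=0$ the prefactor of $\tilde c$ equals $\tfrac{n}{n}=1$, the $C$-permutation is forced to be the identity and there are no triples, so $d_{\ell_1,\ell_2,\ell_3}(n;0)$ simply counts the planar ($g=0$, unicellular) $3$-constellations with $\ell_i$ circle vertices of color $i$. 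This is the same planar count as $c_{\ell_1,\ell_2,\ell_3}(n;0)$, whence $\tilde c=c$ in genus $0$.

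Finally I would close the induction. For $g\ge 1$ every term on the right-hand side of the common recurrence involves strictly smaller genus: the circle terms have genus $g-h$ with $h\ge 1$, and the square term has genus $g-1$. The induction hypothesis therefore applies to all of them and yields $\tilde c_{\ell_1,\ell_2,\ell_3}(n;g_0)=c_{\ell_1,\ell_2,\ell_3}(n;g_0)$ in genus $g$ as well. Unwinding the definition of $\tilde c$ gives exactly the asserted relation $c_{\ell_1,\ell_2,\ell_3}(n;g_0)=\tfrac{n}{2^{2g_0}(n-g_0)}\,d_{\ell_1,\ell_2,\ell_3}(n;g_0)$.
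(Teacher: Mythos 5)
Your overall strategy is exactly the paper's: its entire proof consists of the assertion that both families of numbers satisfy the same initial conditions and the same induction relations, and your verification of the induction step is correct and complete. The identity
\[\frac{n}{2^{2g_0}(n-g_0)}\cdot 2(n-1)^3=\frac{n(n-1)^2}{2}\cdot\frac{n-1}{2^{2g_0-2}(n-g_0)}\]
does convert the coefficient $2(n-1)^3$ of the $d$-recurrence into the coefficient $\tfrac{n(n-1)^2}{2}$ of the $c$-recurrence, and the three circle-vertex sums transfer verbatim because the prefactor depends only on $n$ and $g_0$. So, as you say, everything reduces to genus $0$.

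The genuine gap is in your genus-$0$ base case. By the paper's definition (Section 2.1), a C-permutation is a \emph{cycle-signed} permutation with all cycles odd; a C-permutation of genus $0$ on $\ell$ elements is therefore not ``forced to be the identity'' --- it is the identity together with a choice of sign on each of its $\ell$ fixed points, so there are $2^{\ell}$ of them. Consequently, at $g=g_0=0$ the quantity $d_{\ell_1,\ell_2,\ell_3}(n;0)$ equals $2^{\ell_1+\ell_2+\ell_3}$ times the number of planar unicellular $3$-constellations with $\ell_i$ circle vertices of color $i$, not that number itself, and your claimed equality $\tilde c=c$ in genus $0$ fails by this factor. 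This factor cannot be waved away: since it depends on the $\ell_i$, and the circle terms of the recurrence change the $\ell_i$, it does not propagate as a constant through the induction, so the base case must confront the signs explicitly (e.g.\ by fixing a convention that removes them, or by adjusting the normalization). A second, lesser, issue: your claim that the genus-$0$ values of $c_{\ell_1,\ell_2,\ell_3}(n;g_0)$ for $n\geq 1$ are ``propagated through the defining induction'' from the $n=0$ data is vacuous, because at $g=0$ the recurrence reads $0=0$ and determines nothing; these values must be \emph{defined} as $\delta_{g_0,0}\,c_{\ell_1,\ell_2,\ell_3}(n)$, which is what the sum formula $c_{\ell_1,\ell_2,\ell_3}(n)=\sum_{g_0}c_{\ell_1,\ell_2,\ell_3}(n;g_0)$ requires. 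To be fair, the paper's own one-line proof glosses over precisely these points, and your attempt has the merit of exposing where the terseness hides real work; but as written, your base-case verification rests on a false count, so the proof does not go through.
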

\begin{proof}
%EFMay il faudrait detailler un peu plus, initial conditions devrait
%correspondre a g=0
Denote by $u_{\ell_1,\ell_2,\ell_3}(n;g_0)$ the left-hand side and by $v_{\ell_1,\ell_2,\ell_3}(n;g_0)$ the right-hand side.
For $g=0$ and $g_0=0$ the left-hand side equals the right-hand side (the factor $2^{2n+1}$ is due to the signs of the cycles ---a cycle of length $1$ on each of the
$2n+1$ vertices--- for the objects on the right-hand side).
For $g=0$ and $g_0>0$ both sides are zero. For $g>0$, we have the inductive relation (inherited from the inductive relation for $c_{\ell_1,\ell_2,\ell_3}(n;g_0)$)
\begin{multline*}
    2g\ \! u_{\ell_1,\ell_2,\ell_3}(n;g_0) =2n^2(n-1) u_{\ell_1,\ell_2,\ell_3}(n-1;g_0-1)
    +\sum_{h \ge 1}                                                                       
        \binom{\ell_1+2h}{2h+1} u_{\ell_1+2h,\ell_2,\ell_3}(n;g_0)\\+                                  
            \binom{\ell_2+2h}{2h+1} u_{\ell_1,\ell_2+2h,\ell_3}(n;g_0)+                                  
                \binom{\ell_3+2h}{2h+1} u_{\ell_1,\ell_2,\ell_3+2h}(n;g_0),
\end{multline*}
and we can see that $v_{\ell_1,\ell_2,\ell_3}(n;g_0)$ satisfies exactly the same relation (inherited from the inductive relation for $d_{\ell_1,\ell_2,\ell_3}(n;g_0)$).
Hence the left-hand side equals the right-hand side for all values of $\ell_1,\ell_2,\ell_3,n,g_0$.
\end{proof}

\subsubsection{Final computation}
The nice feature in this statement is that, as it counts planar objects,
$d_{\ell_1,\ell_2,\ell_3}(n;g_0)$ can be easily computed combinatorially.
As usual, denote $g=1/2 (2n+1-\ell_1-\ell_2-\ell_3)$.

\begin{lem}
    For any integers $\ell_1,\ell_2,\ell_3 \ge 1$ and $g_0 \ge 0$, one has
    \[
    d_{\ell_1,\ell_2,\ell_3}(n;g_0) = \frac{2^{\ell_1+\ell_2+\ell_3} \, (n!)^3\, (n-g_0)}{
        n^2 \, g_0!\, \ell_1!\, \ell_2!\, \ell_3!} 
        \cdot \sum_{g_1+g_2+g_3=g-g_0} \, 
        \prod_{i=1}^3 \,\frac{P_{g_i}(\ell_i)}{(n-\ell_i -g_0-2g_i)!},
\]
where \[P_{h}(x)=\sum_{\gamma \vdash h} \frac{(x)_{\ell(\gamma)}}
{\prod_i m_i(\gamma)! (2i+1)^{m_i(\gamma)}}.\]
    \label{lem:computing_d}
\end{lem}
\begin{proof}
    Fix $g_1,g_2,g_3 \ge 0$ with $g_1+g_2+g_3=g-g_0$.
%EFMay raccourcit !    
%    Let us first count the number $c_{(r_1,r_2,r_3)}(n-g_0)$ 
%    of planar 3-constellations
%    with $r_1:=\ell_1+2g_1$ (resp. $r_2:=\ell_2+2g_2$ and $r_3:=\ell_3+2g_3$)
%    circle vertices of color $1$ (resp. $2$ and $3$).
%    One has to sum over the possible multitypes
%    $(\lambda^{(1)},\lambda^{(2)},\lambda^{(3)})$ and use 
%    formula~\eqref{Eq3ConstPlanar}:
%    \begin{align*}
%        c_{(r_1,r_2,r_3)}(n-g_0) &=
%    \sum_{\lambda^{(1)} \vdash n-g_0 \atop \ell(\lambda^{(1)})=r_1 }
%    \sum_{\lambda^{(2)} \vdash n-g_0 \atop \ell(\lambda^{(2)})=r_2 }
%    \sum_{\lambda^{(3)} \vdash n-g_0 \atop \ell(\lambda^{(3)})=r_3 }
%    \frac{(n-g_0)^2 (r_1-1)! (r_2-1)! (r_3-1)!}
%    {a(\lambda^{(1)}) a(\lambda^{(2)}) a(\lambda^{(3)})}\\
%    &=\frac{(n-g_0)^2}{r_1 r_2 r_3} 
%        \binom{n-g_0-1}{r_1-1}
%        \binom{n-g_0-1}{r_2-1}
%        \binom{n-g_0-1}{r_3-1}.
%\end{align*}
%We used three times the equation
%\[\sum_{\lambda \vdash n, \ell(\lambda)=s} \frac{s!}{a(\lambda)}=\binom{n-1}{s-1}\]
%that counts compositions of $n$ of length $s$ in two different ways.  
The number $c_{r_1,r_2,r_3}(n-g_0)$ 
    of planar 3-constellations
    with $r_1:=\ell_1+2g_1$ (resp. $r_2:=\ell_2+2g_2$ and $r_3:=\ell_3+2g_3$)
    circle vertices of color $1$ (resp. $2$ and $3$) is given
by the multitype ($3$ colors here) Narayana number
$$
\frac1{n-g_0}\binom{n-g_0}{r_1}\binom{n-g_0}{r_2}\binom{n-g_0}{r_3}.
$$
Using the same arguments as in Section \ref{SubsectImmediateCorol},
we see that the number of possible choices for the $C$-permutation of the
circle vertices of such a constellation with $\ell_1$ (resp. $\ell_2$
and $\ell_3$) cycles of vertices of color $1$ (resp. $2$ and $3$) is
\[\prod_{i=1}^3 2^{\ell_i} \frac{(r_i)!}{(\ell_i)!} P_{g_i}(\ell_i). \]
Finally, one has $(n-g_0)\dots(n-1)$ ways to add one by one
$g_0$ square leaves
connected to vertices of color $2$ (resp. $3$).
Because of the root, there are $(n-g_0+1)\dots(n)$ to do it for vertices
of color $1$.
As we added these leaves one by one, we can pack them into triples
in a canonical way.
But these triplets are ordered, so we shall divide by $g_0!$ at the end.

Putting everything together, we get the formula stated in the lemma.
\end{proof}

Finally, from Corollary~\ref{coro:cd} and Lemma~\ref{lem:computing_d}, we get a combinatorial proof of the following
enumeration formula for $3$-constellations with respect to the
number of circle vertices of each color.
\begin{prop}
    For any $n,\ell_1,\ell_2,\ell_3 \ge 1$, with $g$ defined by $\ell_1+\ell_2+\ell_3=2n-2g+1$, one has:
%EFMay remplacer le denom 2^{2g0} par 2^{2n+1} ?
    \[c_{\ell_1,\ell_2,\ell_3}(n) =\frac{n!^2\ \!(n\!-\!1)!}{
        2^{2g}\ \ell_1!\, \ell_2!\, \ell_3!} 
        \cdot \sum_{g_0+g_1+g_2+g_3=g} \frac{1}{g_0!}\, 
        \prod_{i=1}^3 \,\frac{P_{g_i}(\ell_i)}{(n-\ell_i -g_0-2g_i)!},\]
    where \[P_{h}(x)=\sum_{\gamma \vdash h} \frac{(x)_{\ell(\gamma)}}
{\prod_i m_i(\gamma)! (2i+1)^{m_i(\gamma)}}.\]
\end{prop}

This formula can alternatively be deduced from the Poulalhon Schaeffer formula
 using~\cite[Lemma 4.1]{GoSc00}.
If we could refine~\eqref{EqIndConst} so as to control the degree distribution,
we could give a purely combinatorial proof of the Poulalhon-Schaeffer formula in the case
$m=3$.

\subsection{Conclusion on constellations}

We are unfortunately not able to give a combinatorial proof of the Poulalhon-Schaeffer formula
(even in the case $m=3$).
Nevertheless, the work presented here suggests that the different elements in this
formula have a combinatorial meaning.
The case $m>3$ seems even harder.

\section*{Acknowledgements}
We are indebted to an anonymous referee for suggesting the
refinement of the Harer-Zagier summation formula presented in Subsection
\ref{subsec:Refinement_Harer_Zagier}.

\bibliographystyle{abbrv}
\bibliography{bib_unicellu}

\begin{thebibliography}{10}

\bibitem{Bernardi-TR}
O.~Bernardi.
\newblock Bijective counting of tree-rooted maps and shuffles of parenthesis
  systems.
\newblock {\em Electron. J. Combin.}, 14(1):Research Paper 9, 36 pp.
  (electronic), 2007.

\bibitem{BeHa}
O.~Bernardi.
\newblock An analogue of the {H}arer-{Z}agier formula for unicellular maps on
  general surfaces.
\newblock {\em Adv. Appl. Math.}, 48(1):164--180, 2012.

\bibitem{BeCha:covered}
O.~Bernardi and G.~Chapuy.
\newblock A bijection for covered maps, or a shortcut between
  {H}arer-{Z}agier's and {J}ackson's formulas.
\newblock {\em J. Combin. Theory Ser. A}, 118(6):1718--1748, 2011.

\bibitem{BernardiChapuyNonOrientable}
O.~Bernardi and G.~Chapuy.
\newblock Counting unicellular maps on non-orientable surfaces.
\newblock {\em Adv. in Appl. Math.}, 47(2):259--275, 2011.

\bibitem{BernardiEtAlSeparation}
O.~Bernardi, R.~Du, A.~H. Morales, and R.~P. Stanley.
\newblock Separation probabilities for products of permutations.
\newblock {\em arxiv:1202.6471}, 2012.

\bibitem{BernardiMoralesSymet}
O.~Bernardi and A.~Morales.
\newblock Counting trees using symmetries.
\newblock arXiv preprint 1206.0598, 2012.

\bibitem{BernardiMoralesFacto}
O.~Bernardi and A.~H. Morales.
\newblock Bijections and symmetries for the factorizations of the long cycle.
\newblock {\em Adv. in Appl. Math.}, 50(5):702--722, 2013.

\bibitem{Biane}
P.~Biane.
\newblock Representations of symmetric groups and free probability.
\newblock {\em Adv. Math.}, 138(1):126--181, 1998.

\bibitem{MBM-Schaeffer}
M.~Bousquet-M{\'e}lou and G.~Schaeffer.
\newblock Enumeration of planar constellations.
\newblock {\em Adv. in Appl. Math.}, 24(4):337--368, 2000.

\bibitem{BDFG:mobiles}
J.~Bouttier, P.~Di~Francesco, and E.~Guitter.
\newblock Planar maps as labeled mobiles.
\newblock {\em Electron. J. Combin.}, 11(1):Research Paper 69, 27, 2004.

\bibitem{Chapuy:constellations}
G.~Chapuy.
\newblock Asymptotic enumeration of constellations and related families of maps
  on orientable surfaces.
\newblock {\em Combin. Probab. Comput.}, 18(4):477--516, 2009.

\bibitem{Ch09}
G.~Chapuy.
\newblock A new combinatorial identity for unicellular maps, via a direct
  bijective approach.
\newblock {\em Advances in Applied Mathematics}, 47(4):874 -- 893, 2011.

\bibitem{CoriMachi}
R.~Cori and A.~Mach{\`{\i}}.
\newblock Maps, hypermaps and their automorphisms: a survey. {I}, {II}, {III}.
\newblock {\em Exposition. Math.}, 10(5):403--427, 429--447, 449--467, 1992.

\bibitem{Fe}
V.~F{\'e}ray.
\newblock Stanley's formula for characters of the symmetric group.
\newblock {\em Ann. Comb.}, 13(4):453--461, 2010.

\bibitem{FlajoletSedgewickAnalyticCombinatorics}
P.~Flajolet and R.~Sedgewick.
\newblock {\em Analytic Combinatorics}.
\newblock Cambridge University Press, 2009.

\bibitem{GJ92}
I.~P. Goulden and D.~M. Jackson.
\newblock The combinatorial relationship between trees, cacti and certain
  connection coefficients for the symmetric group.
\newblock {\em European J. Combin.}, 13(5):357--365, 1992.

\bibitem{GoNi}
I.~P. Goulden and A.~Nica.
\newblock A direct bijection for the {H}arer-{Z}agier formula.
\newblock {\em J. Comb. Theory, Ser. A}, 111(2):224--238, 2005.

\bibitem{GoSc00}
A.~Goupil and G.~Schaeffer.
\newblock Factoring {$n$}-cycles and counting maps of given genus.
\newblock {\em European J. Combin.}, 19(7):819--834, 1998.

\bibitem{HaZa86}
J.~Harer and D.~Zagier.
\newblock The {E}uler characteristic of the moduli space of curves.
\newblock {\em Invent. Math.}, 85:457--486, 1986.

\bibitem{Pitman}
P.~J.
\newblock Coalescent random forests.
\newblock {\em J. Combin. Theory Ser. A}, 85:165--193, 1999.

\bibitem{Ja88}
D.~M. Jackson.
\newblock Some combinatorial problems associated with products of conjugacy
  classes of the symmetric group.
\newblock {\em J. Comb. Theory, Ser. A}, 49(2):363--369, 1988.

\bibitem{ZvLa97}
S.~K. Lando and A.~K. Zvonkin.
\newblock {\em Graphs on Surfaces and Their Applications}.
\newblock Springer, 2004.

\bibitem{La01}
B.~Lass.
\newblock D\'emonstration combinatoire de la formule de {H}arer-{Z}agier.
\newblock {\em C. R. Acad. Sci. Paris}, 333, S\'erie I:155--160, 2001.

\bibitem{MoVa09}
A.~Morales and E.~Vassilieva.
\newblock Bijective enumeration of bicolored maps of given vertex degree
  distribution.
\newblock {\em DMTCS Proceedings}, AK:661--672, 2009.

\bibitem{Mullin}
R.~C. Mullin.
\newblock On the enumeration of tree-rooted maps.
\newblock {\em Canad. J. Math.}, 19:174--183, 1967.

\bibitem{PoulalhonSchaefferConstellations}
D.~Poulalhon and G.~Schaeffer.
\newblock Factorizations of large cycles in the symmetric group.
\newblock {\em Disc. Math.}, 254(1):433--458, 2002.

\bibitem{Ra07}
A.~Rattan.
\newblock Stanley's character polynomials and coloured factorizations in the
  symmetric group.
\newblock {\em J. Combin. Theory Ser. A}, 114(4):535--546, 2008.

\bibitem{Remy}
J.-L. R{\'e}my.
\newblock Un proc\'ed\'e it\'eratif de d\'enombrement d'arbres binaires et son
  application \`a leur g\'en\'eration al\'eatoire.
\newblock {\em RAIRO Inform. Th\'eor.}, 19(2):179--195, 1985.

\bibitem{ScVa08}
G.~Schaeffer and E.~A. Vassilieva.
\newblock A bijective proof of {J}ackson's formula for the number of
  factorizations of a cycle.
\newblock {\em J. Comb. Theory, Ser. A}, 115(6):903--924, 2008.

\bibitem{St}
R.~P. Stanley.
\newblock A conjectured combinatorial interpretation of the normalized
  irreducible character values of the symmetric group.
\newblock {\em arxiv:math/0606467}, 2006.

\bibitem{KatyaSummationFormulaCharacters}
E.~A. Vassilieva.
\newblock Explicit monomial expansions of the generating series for connection
  coefficients.
\newblock arXiv:1111.6215, 2011.

\bibitem{LeWa72}
T.~R.~S. Walsh and A.~B. Lehman.
\newblock Counting rooted maps by genus.~{I}.
\newblock {\em J. Combin. Theory Ser. B}, 13:192--218, 1972.

\end{thebibliography}

\end{document}